\newlength{\tabwidth}
\newlength{\tabheight}
\newcommand{\tabstyle}{\textstyle}
\newlength{\tabrulel}
\newlength{\tabruler}
\newlength{\tabrulet}
\newlength{\tabruleb}
\newlength{\tabwidthx}
\newlength{\tabheightx}
\def\gentabbox#1#2#3#4#5#6#7{\vbox to \tabheight{%
  \setlength{\tabrulel}{#3}\setlength{\tabruler}{#4}%
  \setlength{\tabrulet}{#5}\setlength{\tabruleb}{#6}%
  \setlength{\tabwidthx}{#1\tabwidth}\addtolength{\tabwidthx}{0.5\tabrulel}%
    \addtolength{\tabwidthx}{0.5\tabruler}%
  \setlength{\tabheightx}{#2\tabheight}\addtolength{\tabheightx}{-\tabheight}%
  \hbox to #1\tabwidth{%
    \hspace{-0.5\tabrulel}\rule{\tabrulel}{#2\tabheight}\hspace{-\tabrulel}%
    \vbox to #2\tabheight{\hsize=\tabwidthx%
      \vspace{-0.5\tabrulet}\hrule width\tabwidthx height\tabrulet%
      \vspace{-0.5\tabrulet}\vss%
      \hbox to \tabwidthx{\hss#7\hss}%
        \vss\vspace{-0.5\tabruleb}%
      \hrule width\tabwidthx height\tabruleb\vspace{-0.5\tabruleb}}%
    \hspace{-\tabruler}\rule{\tabruler}{#2\tabheight}\hspace{-0.5\tabruler}}%
  \vspace{-\tabheightx}}}
\def\genblankbox#1#2{\vbox to \tabheight{\vfil\hbox to #1\tabwidth{\hfil}}}
\def\tabbox#1#2#3{\gentabbox{#1}{#2}{0.4pt}{0.4pt}{0.4pt}{0.4pt}{#3}}
\def\boldtabbox#1#2#3{\gentabbox{#1}{#2}{1.2pt}{1.2pt}{1.2pt}{1.2pt}{#3}}
\newenvironment{tableau}{\bgroup\catcode`\:=13 \catcode`\.=13
  \catcode`\;=13 \catcode`\>=13 \catcode`\^=13 
  \def\b##1##2##3{\boldtabbox{##1}{##2}{\vbox{##3}}}%
  \def\n##1##2##3{\tabbox{##1}{##2}{\vbox{##3}}}%
  \def\c{\tabbox{1}{1}{{}}}
  \vcenter\bgroup\offinterlineskip}{\egroup\egroup}
\newcommand{\rc}{\gentabbox{1}{1}{0.4pt}{2pt}{0.4pt}{0.4pt}{{}}}
\newcommand{\lc}{\gentabbox{1}{1}{2pt}{0.4pt}{0.4pt}{0.4pt}{{}}}
\newcommand{\rce}[1]{\gentabbox{1}{1}{0.4pt}{2pt}{0.4pt}{0.4pt}{{$\tabstyle #1$}}}
\newcommand{\lce}[1]{\gentabbox{1}{1}{2pt}{0.4pt}{0.4pt}{0.4pt}{{$\tabstyle #1$}}}
\newcommand{\brce}[1]{\gentabbox{1}{1}{1pt}{2pt}{1pt}{1pt}{{$\tabstyle #1$}}}
\newcommand{\bce}[1]{\boldtabbox{1}{1}{{$\tabstyle #1$}}}
\newcommand{\row}[1]{\hbox{$\tabstyle #1$}}
\newcommand{\caprow}[1]{\hbox{$\scriptscriptstyle #1$}}
\newcommand{\q}{\genblankbox{1}{1}\relax}
\newcommand{\sq}{\genblankbox{0.5}{1}\relax}
\newcommand{\cp}{\gentabbox{1}{1}{0.4pt}{0.4pt}{0.4pt}{0.4pt}{{$\tabstyle +$}}}
\newcommand{\cm}{\gentabbox{1}{1}{0.4pt}{0.4pt}{0.4pt}{0.4pt}{{$\tabstyle -$}}}
\renewcommand{\tabstyle}{\scriptscriptstyle}
\newtheorem{thm}{Theorem}[section]
\newtheorem{lem}[thm]{Lemma}
\newtheorem{prop}[thm]{Proposition}
\newtheorem{cor}[thm]{Corollary}
\newtheorem{conj}[thm]{Conjecture}
\theoremstyle{definition}
\newtheorem{defn}[thm]{Definition}
\theoremstyle{remark}
\newtheorem{rmk}[thm]{Remark}
\newtheorem{exam}[thm]{Example}
\numberwithin{equation}{section}
\newcommand{\C}{\mathbb{C}}
\newcommand{\F}{\mathbb{F}}
\newcommand{\cC}{\mathcal{C}}
\newcommand{\cN}{\mathcal{N}}
\newcommand{\cO}{\mathcal{O}}
\newcommand{\cP}{\mathcal{P}}
\newcommand{\cQ}{\mathcal{Q}}
\newcommand{\gl}{\mathfrak{gl}}
\newcommand{\fn}{\mathfrak{n}}
\newcommand{\SP}{\mathcal{SP}}
\newcommand{\SQ}{\mathcal{SQ}}
\newcommand{\bt}{{\mathbf{t}}}
\newcommand{\Ib}{{\overline{I}}}
\newcommand{\isomto}{\overset{\sim}{\rightarrow}}
\newcommand{\git}{\setminus\negthickspace\negthinspace\setminus}
\newcommand{\half}{{\textstyle\frac{1}{2}}}
\DeclareMathOperator{\End}{End}
\DeclareMathOperator{\Hom}{Hom}
\DeclareMathOperator{\Spec}{Spec}
\DeclareMathOperator{\Gr}{Gr}
\DeclareMathOperator{\codim}{codim}
\DeclareMathOperator{\im}{im}
\title[Normality of orbit closures]
{Normality of orbit closures in the enhanced nilpotent cone}
\author{Pramod N. Achar}
\address{Department of Mathematics\\
Louisiana State University\\
Baton Rouge, LA 70803}
\email{pramod@math.lsu.edu}
\author{Anthony Henderson}
\address{School of Mathematics and Statistics\\
University of Sydney NSW 2006\\
Australia}
\email{anthony.henderson@sydney.edu.au}
\author{Benjamin F. Jones}
\address{Department of Mathematics\\
University of Georgia\\
Athens, GA 30602-7403}
\email{bjones@math.uga.edu}
\thanks{The first author's research was supported by Louisiana
Board of Regents grant NSF(2008)-LINK-35 and by National Security
Agency grant H98230-09-1-0024. The second
author's research was supported by
Australian Research Council grant DP0985184. The third author's research was supported in part by NSF VIGRE grant DMS-0738586.}
\begin{document}

\begin{abstract}
We continue the study of the closures of $GL(V)$-orbits in the enhanced nilpotent cone $V\times\cN$ begun by the first two authors. We prove that each closure is an invariant-theoretic quotient of a suitably-defined enhanced quiver variety. We conjecture, and prove in special cases, that these enhanced quiver varieties are normal complete intersections, implying that the enhanced nilpotent orbit closures are also normal.
\end{abstract}

\maketitle

\section{Introduction}

The geometry of nilpotent orbits in complex semisimple Lie algebras is a topic of central importance in numerous branches of representation theory.  A fundamental question on this topic is: Are the closures of nilpotent orbits normal varieties?  This question was answered in the affirmative for nilpotent orbits in type $A$ by Kraft--Procesi~\cite{kp} in 1979.  In other types, the answer turns out to be ``not always'': an explicit determination of the nilpotent orbits with normal closures was carried out in types $B$ and $C$ by Kraft--Procesi~\cite{kp2}, and in types $G_2$, $F_4$, and $E_6$ by Kraft~\cite{kra}, Broer~\cite{bro}, and Sommers~\cite{som1}, respectively.  The case of type $D$ was partially resolved by Kraft--Procesi~\cite{kp2} and completed by Sommers~\cite{som2}.  A complete answer is not yet known in types $E_7$ and $E_8$.  

The present paper is concerned with the variety $V \times \cN$, where $V$ is a finite-dimensional complex vector space, and $\cN$ is the variety of nilpotent elements in $\End(V)$.  This variety, known as the \emph{enhanced nilpotent cone}, was studied by the first two authors in~\cite{ah}.  It is closely related to Kato's exotic nilpotent cone~\cite{kato:exotic, kato:deformations} and to the work of Travkin~\cite{travkin} together with Finkelberg and Ginzburg~\cite{fgt} on mirabolic character sheaves.  The geometry of $GL(V)$-orbits on $V \times \cN$ resembles that of ordinary type-$A$ nilpotent orbits in some ways (e.g., the only equivariant local systems are trivial), but is reminiscent of types $B$ and $C$ in others (e.g., the orbits are parametrized by bipartitions and the local intersection cohomology of orbit closures is described by type-$B$/$C$ combinatorics~\cite{ah}). The upshot of this paper is that, as regards normality of orbit closures, the enhanced nilpotent cone is analogous to the type-$A$ nilpotent cone. That is, our results contribute to proving the following generalization of~\cite{kp}.

\begin{conj}\label{conj:enhnorm}
The closure of each $GL(V)$-orbit in $V \times \cN$ is normal.
\end{conj}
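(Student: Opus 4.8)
The plan is to adapt the strategy by which Kraft and Procesi~\cite{kp} established normality of type-$A$ nilpotent orbit closures. The orbits in $V\times\cN$ are indexed by bipartitions; fix one, say $(\mu;\nu)$, with orbit closure $\overline{\cO}_{\mu;\nu}\subseteq V\times\cN$. First I would attach to $(\mu;\nu)$ an \emph{enhanced quiver variety} $\cQ=\cQ_{\mu;\nu}$: an affine variety of tuples of linear maps arranged along a type-$A$ ``double chain'' $\C^{d_0}\rightleftarrows\C^{d_1}\rightleftarrows\cdots\rightleftarrows\C^{d_r}$, with the dimensions $d_i$ read off from the columns of the bipartition, together with a single framing arrow $\C\to\C^{d_0}$ recording the enhancement vector, all subject to the natural quiver (preprojective-type) relations at the interior nodes. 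A reductive group $G=\prod_i\GL(\C^{d_i})$ acts on $\cQ$ by change of basis, and the heart of the first step is to construct a $G$-invariant ``collapsing'' morphism $\pi\colon\cQ\to V\times\cN$ whose image is exactly $\overline{\cO}_{\mu;\nu}$ and which identifies $\overline{\cO}_{\mu;\nu}$ with the invariant-theoretic quotient $\cQ\git G$. This is the assertion of the abstract, and is the point at which the combinatorics of the closure order from~\cite{ah} enters.

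Granting that identification, and using that the invariant ring of a normal affine variety under a reductive group is again normal, Conjecture~\ref{conj:enhnorm} reduces to the single claim that each $\cQ_{\mu;\nu}$ is normal. For this I would invoke Serre's criterion: it suffices to show $\cQ$ is Cohen--Macaulay (hence $S_2$) and regular in codimension one ($R_1$). The Cohen--Macaulay property I expect to obtain essentially for free by proving that $\cQ$ is a \emph{complete intersection}, i.e.\ that the number of defining quiver relations equals $\codim\cQ$ inside the ambient space of linear maps. This splits into (a) a dimension count for $\cQ$, carried out by induction along the chain, peeling off one node at a time and analysing the resulting morphism of representation varieties as a fibration over a smaller enhanced quiver variety; and (b) irreducibility of $\cQ$, for which I would exhibit a dense subset on which the maps have maximal rank, or else deduce connectedness and combine it with the complete-intersection/$S_2$ conclusion.

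The remaining step, which I expect to be the genuine obstacle, is the $R_1$ condition: one must show that the non-smooth locus of $\cQ_{\mu;\nu}$ has codimension at least two. Here one computes the Jacobian of the quiver relations and must control the locus where its rank drops; in the Kraft--Procesi setting this is handled by delicate ``row and column removal'' reductions that trade a given orbit closure for a strictly smaller one compatibly with singularities, and one hopes for analogous reductions in the enhanced setting. The presence of the lone framing arrow breaks the left--right symmetry of the chain and obstructs a naive induction, so I would expect to be able to push the argument through in full only in special cases --- for instance when one of $\mu$, $\nu$ is a single row or column, or more generally has very few parts --- which is exactly why the general statement is left as a conjecture. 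In short, the realization as a GIT quotient and the complete-intersection property should be provable unconditionally, while the full $R_1$ verification is where the difficulty is concentrated.
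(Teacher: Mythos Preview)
Your broad strategy coincides with the paper's: realize $\overline{\cO_{\mu;\nu}}$ as a GIT quotient of an affine enhanced quiver variety, then prove that variety is a normal complete intersection via Serre's criterion, acknowledging that the codimension-$2$ bound on the singular locus is the unresolved step. But two specifics of your proposal diverge from what the paper actually does, and the first is a genuine gap.

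Your enhancement---a \emph{single} framing arrow $\C\to\C^{d_0}$---is not the right construction. The paper (Section~\ref{sect:eqv}) attaches to $(\mu;\nu)$ not only the dimension vector $(r_i)$ (the column-lengths of $\mu+\nu$) but a subset $I\subset\{0,\dots,t-1\}$ recording which columns come from $\mu$; the variety $\Lambda_{(r_i),I}$ carries a vector $u_i$ at \emph{every} node, with relations $B_iu_{i+1}=u_i$ for $i\in I$ and $A_iu_i=u_{i+1}$ for $i\notin I$. The genuinely free vectors sit at the transitions from $\Ib$ to $I$, and there may be several (Example~\ref{exam:intro}, Example~\ref{exam:112}). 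A single framing at one end recovers only the degenerate cases $I=\{0,\dots,s-1\}$ of Theorem~\ref{thm:easy}, i.e.\ bipartitions where every column of $\nu$ is at least as long as every column of $\mu$; distinct bipartitions with the same $\mu+\nu$ require distinct enhancements. The GIT-quotient identification (Theorem~\ref{thm:gitquot}) for general $(\mu;\nu)$ genuinely needs this $I$-dependent placement: its inductive proof peels off one node and invokes Proposition~\ref{prop:itq}, which has two separate cases according to whether the outermost column belongs to $\mu$ or to $\nu$.

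Second, the paper's attack on the singular locus is not via row--column removal. Instead it stratifies $\Lambda_{(r_i),I}$ by the $(GL(U_j)\times GL(U_{j+1}))$-orbit types of the constituent enhanced nilpotent pairs, indexed by sequences of signed quasibipartitions (Johnson's classification, Lemma~\ref{lem:johnson}), and bounds each stratum's dimension (Proposition~\ref{prop:dimbound}); the residual combinatorial problem is Conjecture~\ref{conj:strata}. Note also that irreducibility and the complete-intersection property are \emph{not} established prior to $R_1$: all three are deduced simultaneously from the single codimension-$2$ estimate in Theorem~\ref{thm:nonsingrel}.
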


In this paper, we prove a series of implications, summarized in Figure~\ref{fig:conj}, that reduce Conjecture~\ref{conj:enhnorm} to a combinatorial statement, Conjecture~\ref{conj:strata}. The combinatorics is more complicated than in the unenhanced case studied in~\cite{kp}, and at present we can prove Conjecture~\ref{conj:strata} only for a restricted class of enhanced nilpotent orbits; we have also verified it by computer for orbits in low dimensions. The cases of Conjecture~\ref{conj:enhnorm} which are proved in this paper are listed in Corollary~\ref{cor:main}.  


\begin{figure}
\[
\xymatrix@C=10pt{
*+[F]{\txt{Conjecture~\ref{conj:strata}}} \ar@{=>}[d] &
\hbox to 3.5in{dimension estimate for strata of $\Lambda$\hfill\ } \\
*+[F]{\txt{Conjecture~\ref{conj:dimbound}}} \ar@{=>}[d]^{\txt{Theorem~\ref{thm:nonsingrel}}} &
\hbox to 3.5in{dimension estimate for singular locus of $\Lambda$\hfill} \\
*+[F]{\txt{Conjecture~\ref{conj:lambdanormal}}} \ar@{=>}[d]^{\txt{Theorem~\ref{thm:normrel}}} & 
\hbox to 3.5in{normality of $\Lambda$\hfill} \\
*+[F]{\txt{Conjecture~\ref{conj:enhnorm}}} \ar@{=>}[d] &
\hbox to 3.5in{normality of enhanced nilpotent orbits\hfill} \\
*+[F]{\txt{Theorem~\ref{thm:r1}}} &
\hbox to 3.5in{regularity in codimension~$1$ for enhanced nilpotent orbits\hfill}}
\]
\caption{}\label{fig:conj}
\end{figure}

The main tool in our argument is a new class of spaces called \emph{enhanced quiver varieties}.  These varieties, whose definition (see Section~\ref{sect:eqv}) is inspired by the methods of Kraft--Procesi~\cite{kp}, seem to be interesting in their own right.
In Theorem~\ref{thm:gitquot}, we exhibit the closure of a $GL(V)$-orbit in $V \times \cN$ as an invariant-theoretic quotient of an enhanced quiver variety. So as in \cite{kp}, proving the normality of the enhanced quiver varieties would suffice to prove Conjecture~\ref{conj:enhnorm}. 

\begin{exam} \label{exam:intro}
Here is one example to give the flavour of the general definition. Suppose that $\dim V=4$. The closure of the subregular orbit in $\cN$ is 
\[ \{x\in\cN\,|\,x^3=0\}, \]
which in~\cite{kp} is described as an invariant-theoretic quotient of the following quiver variety: the variety of quadruples $(A_1,B_1,A_2,B_2)$ of linear maps
\[
\xymatrix{
\C^1 \ar@/^/[r]^{A_1} &
\C^2 \ar@/^/[l]^{B_{1}} \ar@/^/[r]^{A_2} &
V \ar@/^/[l]^{B_{2}}
}
\]
satisying the equations $B_1 A_1=0$ and $B_2 A_2=A_1 B_1$. In the enhanced setting, one of the orbit closures in $V\times\cN$ is 
\[ \{(v,x)\in V\times\cN\,|\,x^3=0,\,x^2 v=0\}. \]
We will describe this as an invariant-theoretic quotient of the following enhanced quiver variety: the variety of sextuples $(u,v,A_1,B_1,A_2,B_2)$ where $(A_1,B_1,A_2,B_2)$ is as above, $u\in\C^1$, $v\in V$, and $A_1 u=B_2 v$.
\end{exam}

We begin in Section~\ref{sect:review} by fixing notation and conventions for partitions and related combinatorial objects, and by recalling relevant facts about enhanced nilpotent orbits and related varieties.  Section~\ref{sect:ivt} is devoted to the proof of a preparatory result on quotients of the space of `enhanced nilpotent pairs'.  Enhanced quiver varieties are introduced in Section~\ref{sect:eqv}, which also contains 
the proof that their quotients are isomorphic to the enhanced nilpotent orbit closures.  
The next two sections carry out further study of the geometry of enhanced quiver varieties, and conclude with a proof of their normality in certain cases.  The aforementioned combinatorial conjecture is stated and discussed in Section~\ref{sect:stratdim}.  Finally, in Section~\ref{sect:r1}, which is somewhat independent of the rest of the paper, we prove that all enhanced nilpotent orbit closures are regular in codimension~$1$.  This is, of course, a necessary condition for Conjecture~\ref{conj:enhnorm} to hold; and it is not immediately obvious, because enhanced nilpotent orbits can have orbits of codimension~$1$ in their boundary. 

The results of Section~\ref{sect:r1} hold over an arbitrary algebraically closed field, which raises the possibility that Conjecture~\ref{conj:enhnorm} may also be true in positive characteristic. The method of proof suggested in this paper follows \cite{kp} in assuming that the characteristic is zero, but it is possible that it could be adapted to positive characteristic with the techniques used by Donkin \cite{donkin} in the unenhanced case.  

\textbf{Acknowledgements.} We are grateful to C.~Johnson for sending us an early version of \cite{johnson}, and to D.~Nakano for helpful conversations.

\section{Partitions and Nilpotent Matrices}
\label{sect:review}

In this section, we fix notation related to the combinatorics of partitions and bipartitions, and we review relevant results on nilpotent orbits, nilpotent pairs, and enhanced versions thereof. These results hold over any field, but we use $\C$ for the sake of subsequent sections.

\subsection{Compositions, partitions, bipartitions}

A \emph{composition} is a sequence $\lambda = (\lambda_1,\lambda_2,\ldots)$ of nonnegative integers with finitely many nonzero terms.  The \emph{size} of a composition, denoted $|\lambda|$, is the sum of its terms.  The infinite tail of $0$'s will typically be omitted when writing a composition.

A \emph{partition} is a composition $(\lambda_1,\lambda_2, \ldots)$ with $\lambda_1 \ge \lambda_2 \ge \cdots$.  The \emph{length} $\ell(\lambda)$ of a partition $\lambda$ is the number of nonzero terms.  Partitions are often written with exponents indicating multiplicities: for instance, we may write $3^21^3$ rather than $(3,3,1,1,1)$.  Let
\[
\cP_n = \{ \text{partitions of size $n$} \}.
\]
A \emph{bipartition of size $n$} is simply an ordered pair $(\mu;\nu)$ of partitions with $|\mu| + |\nu| = n$.  We put
\[
\cQ_n = \{ \text{bipartitions of size $n$} \}.
\]
Given a bipartition $(\mu;\nu)$, we can form a partition in two ways: the \emph{sum} $\mu + \nu$ (obtained by termwise addition of sequences) and the \emph{union} $\mu \cup \nu$ (obtained by arranging the nonzero terms of $\mu$ and $\nu$ in decreasing order).  The \emph{transpose} $\lambda^\bt$ of a partition $\lambda$ is given by $(\lambda^\bt)_i = \#\{j \mid \lambda_j \ge i\}$.  Note that $(\mu + \nu)^\bt = \mu^\bt \cup \nu^\bt$.

A convenient way to visualize partitions and bipartitions is via diagrams of boxes. For a partition $\lambda$ we use the usual left-justified Young diagram where the parts of $\lambda$ give the number of boxes in each row, and the parts of $\lambda^\bt$ give the number of boxes in each column. For a bipartition $(\mu;\nu)$, following \cite{ah}, we put the Young diagrams of $\mu$ and $\nu$ `back to back', separated by a vertical `wall'; thus the diagram of $\mu+\nu$ is obtained by forgetting the wall and left-justifying the boxes. For example,
\[
(3,2,1,1) =
\begin{tableau}
\row{\c\c\c}
\row{\c\c}
\row{\c}
\row{\c}
\end{tableau}
\qquad\text{and}\qquad
((2,1);(3,2,1,1))=
\begin{tableau}
\row{\c\c\lc\c\c}
\row{\q\c\lc\c}
\row{\q\q\lc}
\row{\q\q\lc}
\end{tableau}
\]

Finally, to any partition $\lambda \in \cP_n$, we associate the quantity
\[
n(\lambda) = \sum_{i=1}^\infty (i-1)\lambda_i = \sum_{i=1}^\infty 
\binom{(\lambda^\bt)_i}{2}.
\]

\subsection{Signed partitions}

A \emph{signed partition} is a pair $(\lambda,\epsilon)$, where $\lambda$ is a partition, and $\epsilon: \{1, \ldots, \ell(\lambda)\} \to \{+,-\}$ is a function such that if $\lambda_i = \lambda_j$, $\epsilon(i) = {+}$, and $\epsilon(j) = {-}$ hold, then $i < j$.  A signed partition determines two \emph{subordinate partitions} $\lambda^{(+)}$ and $\lambda^{(-)}$ as follows. Define compositions $\lambda^{(+)}$ and $\widetilde{\lambda}^{(-)}$ by
\[
\lambda^{(+)}_i = \begin{cases}
\lceil \lambda_i/2 \rceil & \text{if $\epsilon(i) = +$,}\\
\lfloor \lambda_i/2 \rfloor & \text{if $\epsilon(i) = -$,}
\end{cases}
\qquad
\widetilde{\lambda}^{(-)}_i = \lambda_i-\lambda^{(+)}_i.
\]
Then $\lambda^{(+)}$ is a partition, and $\widetilde{\lambda}^{(-)}$ fails to be a partition exactly when there exist some $i<j$ such that $\lambda_i = \lambda_j$ is odd, $\epsilon(i) = {+}$, and $\epsilon(j) = {-}$. We define $\lambda^{(-)}$ to be the partition obtained by rearranging the parts of $\widetilde{\lambda}^{(-)}$ in decreasing order. The \emph{signature} of a signed partition $(\lambda,\epsilon)$ is the pair of integers $(|\lambda^{(+)}|, |\lambda^{(-)}|)$.  The set of all signed partitions of signature $(d,d')$ is denoted $\SP_{d,d'}$.

The visual interpretation is as follows. The signed partition $(\lambda,\epsilon)$ may be drawn as the Young diagram of $\lambda$ with the values of $\epsilon$ filled in along the first column, and then signs inserted in the rest of the diagram so that `$+$' and `$-$' alternate across rows. The condition on $\epsilon$ stipulates that among the rows of a certain length, those beginning with `$+$' come above those beginning with `$-$'. For instance, $((6,3,3,3,2,1), (-,+,-,-,-,+))$ would be drawn as
\[
\begin{tableau}
\row{.-.+.-.+.-.+}
\row{.+.-.+}
\row{.-.+.-}
\row{.-.+.-}
\row{.-.+}
\row{.+}
\end{tableau}
\]
Then, $\lambda^{(+)}$ is obtained by erasing all `$-$' boxes and left-justifying the remaining boxes, and $\lambda^{(-)}$ is defined analogously but possibly with the additional step of re-ordering the rows.  In our example, we have
\[
\lambda^{(+)} =
\begin{tableau}
\row{\c\c\c}
\row{\c\c}
\row{\c}
\row{\c}
\row{\c}
\row{\c}
\end{tableau}
\qquad\text{and}\qquad
\lambda^{(-)} =
\begin{tableau}
\row{\c\c\c}
\row{\c\c}
\row{\c\c}
\row{\c}
\row{\c}
\row{\q}
\end{tableau}
\]
The signature of $(\lambda,\epsilon)$ simply counts the `$+$' boxes and the `$-$' boxes.


\subsection{Signed quasibipartitions}

A \emph{quasipartition} is a composition $\lambda = (\lambda_1, \lambda_2, \ldots)$ satisfying $\lambda_i \ge \lambda_j-1$ whenever $i \le j$.  

A \emph{signed quasibipartition} is a triple $(\mu;\nu, \epsilon)$ where $\mu$ and $\nu$ are quasipartitions such that $\mu+\nu$ is a partition, and $(\mu+\nu,\epsilon)$ is a signed partition such that
\[ \epsilon(i) = \begin{cases} 
+,&\text{ if $\mu_i\geq 1$ is odd,}\\
-,&\text{ if $\mu_i\geq 2$ is even,}\\
-,&\text{ if $\mu_i=0$ and there is some $j<i$ such that $\nu_j=\nu_{i}-1$,}\\
&\text{ or $\mu_i=0$ and there is some $j>i$ such that $\mu_j=1$.}
\end{cases} \]
Note that we do not specify $\epsilon(i)$ if $\mu_i=0$ and there is no $j$ as above.
The \emph{signature} of $(\mu;\nu,\epsilon)$ is the signature of $(\mu+\nu,\epsilon)$.  The set of all signed quasibipartitions of signature $(d,d')$ is denoted $\SQ_{d,d'}$.


\begin{rmk} \label{rmk:johnsonconvention}
The definition of signed quasibipartition given here is equivalent to that of `striped $2$-bipartition' given by Johnson~\cite[Definition 4.1]{johnson}. The main difference is that where we would have $\mu_i=0$, $\nu_i=s\geq 1$, and $\epsilon(i)=+$, he would have $\mu_i=-1$, $\nu_i=s+1$, and $\epsilon(i)=+$. The restrictions we imposed on this case are equivalent to saying that the quasipartition inequalities continue to hold if one applies this shift. Johnson's convention achieves a uniform rule that $\epsilon(i)=+$ if and only if $\mu_i$ is odd, but at the cost of allowing $\mu$ to have negative parts.
\end{rmk}

As above, we can draw a signed quasibipartition $(\mu;\nu,\epsilon)$ as a pair of back-to-back diagrams of boxes with the values of $\epsilon$ entered in the leftmost box of each row, and with `$+$' and `$-$' alternating across rows.  The condition on $\epsilon$ implies that every box immediately to the left of the wall contains a `$+$'.  For instance, the signed quasibipartition $((2,3,2,1,2,0,1);(4,2,3,3,1,2),(-,+,-,+,-,-,+))$ would be
\[
\begin{tableau}
\row{\q.-\rce{+}.-.+.-.+}
\row{.+.-\rce{+}.-.+}
\row{\q.-\rce{+}.-.+.-}
\row{\q\q\rce{+}.-.+.-}
\row{\q.-\rce{+}.-}
\row{\q\q\q\lce{-}.+}
\row{\q\q\rce{+}}
\end{tableau}
\]

Given a signed quasibipartition $(\mu;\nu,\epsilon)$ with $\mu+\nu=\lambda$, we will define \emph{subordinate bipartitions} $(\mu^{(+)};\nu^{(+)})$ and $(\mu^{(-)};\nu^{(-)})$ such that
\begin{equation}
\mu^{(+)} + \nu^{(+)} = \lambda^{(+)}
\qquad\text{and}\qquad
\mu^{(-)} + \nu^{(-)} = \lambda^{(-)}.
\end{equation}
We first define quasipartitions $\widetilde{\mu}^{(+)},\widetilde{\nu}^{(+)},\widetilde{\mu}^{(-)},\widetilde{\nu}^{(-)}$ which count the number of boxes of a given sign on a given side of the wall and in a given row:  
\[
\begin{split}
\widetilde{\mu}_i^{(+)}&=\lceil\mu_i/2\rceil,\qquad\qquad\qquad\qquad\qquad\qquad\qquad
\widetilde{\mu}_i^{(-)}=\mu_i-\widetilde{\mu}_i^{(+)},\\
\widetilde{\nu}_i^{(+)}&=\begin{cases}
\lceil\nu_i/2\rceil&\text{ if $\mu_i=0$ and $\epsilon(i)=+$,}\\
\lfloor\nu_i/2\rfloor&\text{ otherwise,}
\end{cases}\;\qquad
\widetilde{\nu}_i^{(-)}=\nu_i-\widetilde{\nu}_i^{(+)}.
\end{split}
\]
Let $\lambda$ be the partition $\mu+\nu$.
Then $\widetilde{\mu}^{(+)}+\widetilde{\nu}^{(+)}=\lambda^{(+)}$ is a partition, but $\widetilde{\mu}^{(-)}+\widetilde{\nu}^{(-)}=\widetilde{\lambda}^{(-)}$ may not be, as seen above. If necessary, apply some permutation simultaneously to the parts of $\widetilde{\mu}^{(-)}$ and to the parts of $\widetilde{\nu}^{(-)}$ so that $\widetilde{\mu}^{(-)}+\widetilde{\nu}^{(-)}$ becomes the partition $\lambda^{(-)}$; it is easy to see that $\widetilde{\mu}^{(-)}$ and $\widetilde{\nu}^{(-)}$ will still be quasipartitions after this permutation. For example, starting from the above signed quasibipartition, we obtain
\[
(\widetilde{\mu}^{(+)};\widetilde{\nu}^{(+)})=
\begin{tableau}
\row{\q\c\lc\c}
\row{\c\c\lc}
\row{\q\c\lc}
\row{\q\c\lc}
\row{\q\rc}
\row{\q\q\lc}
\row{\q\rc}
\end{tableau}
\qquad\text{and}\qquad
(\widetilde{\mu}^{(-)};\widetilde{\nu}^{(-)})=
\begin{tableau}
\row{\c\lc\c}
\row{\c\lc\c}
\row{\c\lc}
\row{\q\lc\c}
\row{\c\lc}
\row{\q\lc}
\row{\q}
\end{tableau}
\]
To produce a bipartition $(\mu^{(+)};\nu^{(+)})$ from $(\widetilde{\mu}^{(+)};\widetilde{\nu}^{(+)})$, we apply the rectification procedure of \cite[Lemma 2.4]{ah}, which in the context of quasipartitions means that
\[
\mu_i^{(+)}=\begin{cases}
\widetilde{\mu}_i^{(+)}+1&\text{ if $\widetilde{\mu}_j^{(+)}=\widetilde{\mu}_i^{(+)}+1$ for some $j>i$}\\
&\text{ or $\widetilde{\nu}_j^{(+)}=\widetilde{\nu}_i^{(+)}-1$ for some $j<i$,}\\
\widetilde{\mu}_i^{(+)}&\text{ otherwise,}
\end{cases}
\] 
and $\nu_i^{(+)}=\lambda_i^{(+)}-\mu_i^{(+)}$.
We obtain $(\mu^{(-)};\nu^{(-)})$ from $(\widetilde{\mu}^{(-)};\widetilde{\nu}^{(-)})$ by the same rule. In our example, we have
\[
(\mu^{(+)};\nu^{(+)})=
\begin{tableau}
\row{\c\c\lc}
\row{\c\c\lc}
\row{\q\c\lc}
\row{\q\c\lc}
\row{\q\rc}
\row{\q\rc}
\row{\q\rc}
\end{tableau}
\qquad\text{and}\qquad
(\mu^{(-)};\nu^{(-)})=
\begin{tableau}
\row{\c\lc\c}
\row{\c\lc\c}
\row{\c\lc}
\row{\c\lc}
\row{\c\lc}
\row{\q\lc}
\row{\q}
\end{tableau}
\]

\subsection{Nilpotent orbits and enhanced nilpotent orbits}

Let $V$ be a complex vector space of dimension $d$, and let
\[
\cN_V = \{ x \in \End(V) \mid \text{$x$ is nilpotent} \}.
\]
(As in the introduction, we may omit the subscript from $\cN_V$ if only one vector space is involved.) $GL(V)$ acts on $\cN_V$ by conjugation.  The Jordan form theorem gives us the following well-known parametrization of orbits by partitions.
\begin{lem}
The $GL(V)$-orbits on $\cN_V$ are in bijection with $\cP_d$.  For $\lambda\in\cP_d$, an element $x\in\cN_V$ belongs to the orbit $\cO_\lambda$ if and only if there is a basis of $V$ which can be identified with the set of boxes in the diagram of $\lambda$, in such a way that $x$ sends a given box to the box on its left, or to zero if there is no box on its left.
\end{lem}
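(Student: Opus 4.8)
The plan is to reduce to the classification of finitely generated torsion modules over the principal ideal domain $\C[t]$. Given $x\in\cN_V$, regard $V$ as a $\C[t]$-module by letting $t$ act as $x$; since $x^N=0$ for some $N$, this module is annihilated by $t^N$, hence is finitely generated and torsion. By the structure theorem it decomposes as $V\cong\bigoplus_{i=1}^{\ell}\C[t]/(t^{\lambda_i})$ for a multiset $\{\lambda_i\}$ of positive integers, uniquely determined up to order; reordering so that $\lambda_1\ge\lambda_2\ge\cdots$ gives a partition $\lambda$ with $|\lambda|=\dim V=d$. In each summand $\C[t]/(t^{\lambda_i})$ take the basis given by the images of $t^{\lambda_i-1},t^{\lambda_i-2},\ldots,t,1$, and identify these (in that order) with the boxes of the $i$-th row of the diagram of $\lambda$. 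One checks at once that $x=t$ then sends each box to the box on its left, or to zero when there is none. Thus every $x\in\cN_V$ lies in an orbit admitting the asserted description; conversely, any $x$ equipped with such an adapted basis manifestly arises this way, so for each $\lambda\in\cP_d$ the orbit $\cO_\lambda$ is well-defined and nonempty.

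It remains to show that $\lambda\mapsto\cO_\lambda$ is injective, i.e.\ that $\lambda$ is a conjugacy invariant which separates orbits. Applying $x^k$ to the adapted basis above kills, in each row, the first $\min(\lambda_i,k)$ boxes and shifts the rest, giving
\[
\rank(x^k)=\sum_{i}\max(\lambda_i-k,\,0)=\sum_{m>k}(\lambda^\bt)_m,
\]
equivalently $\dim\ker(x^k)-\dim\ker(x^{k-1})=(\lambda^\bt)_k=\#\{i\mid\lambda_i\ge k\}$. Hence the numbers $(\lambda^\bt)_k$, and therefore $\lambda$ itself, are recovered from the ranks $\rank(x^k)$; since these are invariant under conjugation, two elements of $\cN_V$ with different associated partitions cannot be $GL(V)$-conjugate. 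Combined with the previous paragraph, this shows that $\lambda\mapsto\cO_\lambda$ is a bijection from $\cP_d$ onto the set of $GL(V)$-orbits on $\cN_V$, with the stated characterization of orbit membership.

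There is no genuine obstacle here; the only points requiring a little care are matching the module-theoretic summands with the rows of the Young diagram in the order that makes $x$ the ``shift a box to its left'' operator, and confirming that the rank computation really does invert the passage from $\lambda$ to $\cO_\lambda$.
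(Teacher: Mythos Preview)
Your proof is correct. The paper does not actually prove this lemma; it simply states it as a well-known consequence of the Jordan form theorem, and your argument via the structure theorem for finitely generated torsion $\C[t]$-modules is exactly the standard derivation of that result.
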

\noindent
If $x\in\cO_\lambda$, we refer to $\lambda$ as the \emph{Jordan type} of $x$.

The following characterization of the closures of these orbits is also well known. Recall the dominance partial order on partitions: if $\rho,\lambda \in \cP_d$, then $\rho \le \lambda$ if and only if for all $k$,
\[ \sum_{i=1}^k \rho_i \le \sum_{i=1}^k \lambda_i .\]

\begin{lem}\label{lem:nilp-crit}
For a nilpotent endomorphism $x \in \cN_V$, the following conditions are equivalent:
\begin{enumerate}
\item $x \in \overline{\cO_\lambda}$.
\item The Jordan type $\rho$ of $x$ satisfies $\rho\leq\lambda$.
\item $V$ admits a filtration $0 = V_0 \subset V_1 \subset \cdots \subset V_{\lambda_1} = V$ such that $x(V_i) \subset V_{i-1}$ and $\dim V_i/V_{i-1} = \lambda^\bt_{\lambda_1+1-i}$.
\item $V$ admits a filtration $0 = V_0 \subset V_1 \subset \cdots \subset V_{\lambda_1} = V$ such that $x(V_i) \subset V_{i-1}$ and $\dim V_i/V_{i-1} = \lambda^\bt_{i}$.
\end{enumerate}
\end{lem}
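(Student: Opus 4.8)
The plan is to prove the chain of equivalences $(1)\Leftrightarrow(2)\Leftrightarrow(3)\Leftrightarrow(4)$, anchoring everything at the standard fact that $x\in\overline{\cO_\lambda}$ if and only if the Jordan type of $x$ is $\le\lambda$ in the dominance order. The equivalence $(1)\Leftrightarrow(2)$ is the classical description of nilpotent orbit closures in type $A$ (originally due to Gerstenhaber/Hesselink), so I would simply cite it. The equivalence $(3)\Leftrightarrow(4)$ is essentially bookkeeping: reindexing $i\mapsto\lambda_1+1-i$ and using that for a partition $\rho$ with $\rho_1\le\lambda_1$ the transpose has $\sum_i\rho^\bt_i=|\rho|$ and the two descriptions of the jumps of the filtration are reverses of one another. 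So the real content is $(2)\Leftrightarrow(3)$ (equivalently $(2)\Leftrightarrow(4)$).

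For $(2)\Rightarrow(3)$: given $x$ of Jordan type $\rho\le\lambda$, I would produce the filtration explicitly from the Jordan basis. Identify a basis of $V$ with the boxes of the diagram of $\rho$ so that $x$ shifts each box one step left (killing leftmost boxes), as in the parametrization lemma. The natural candidate is to let $V_i$ be the span of all basis vectors lying in the first $i$ columns of the diagram of $\rho$; then $x(V_i)\subset V_{i-1}$ automatically, $V_{\rho_1}=V$, and $\dim V_i/V_{i-1}=\rho^\bt_i$. But condition $(3)$ demands the graded dimensions be $\lambda^\bt_{\lambda_1+1-i}$, which need not match $\rho^\bt_i$. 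The fix is standard: since $\rho\le\lambda$ one has $\rho^\bt\ge\lambda^\bt$ in dominance (transposition reverses dominance), and the transpose partitions $\rho^\bt$ and $\lambda^\bt$ have the same size; one then checks that a composition $(\lambda^\bt_{\lambda_1},\lambda^\bt_{\lambda_1-1},\dots,\lambda^\bt_1)$ can be realized as the sequence of graded dimensions of \emph{some} $x$-stable filtration refining or coarsening the column filtration of $\rho$. Concretely: I would show that whenever $c=(c_1,c_2,\dots)$ and $c'=(c'_1,c'_2,\dots)$ are two compositions whose nonincreasing rearrangements are partitions $\sigma\ge\tau$ of the same size with a single ``elementary move'' between them (move one box from row $p$ up to row $q<p$), and $x$ admits an $x$-stable filtration with graded dimensions $c$, then $x$ admits one with graded dimensions $c'$; iterating elementary moves connects any two partitions comparable in dominance. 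This reduces the implication to the single-box case, which is an elementary linear-algebra argument about splitting or regrouping steps of the filtration.

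For $(3)\Rightarrow(2)$: suppose such a filtration exists. The key observation is that $x^k$ maps $V_{i}$ into $V_{i-k}$, so $x^k$ kills $V_k$; more usefully, $\dim\ker x^k\ge\dim V_k=\sum_{j=1}^{k}\lambda^\bt_{\lambda_1+1-j}=\sum_{j=\lambda_1-k+1}^{\lambda_1}\lambda^\bt_j$, and dually $\dim\im x^k=d-\dim\ker x^k$. Since the Jordan type $\rho$ of $x$ is determined by the numbers $\dim\ker x^k$ via $\dim\ker x^k-\dim\ker x^{k-1}=\rho^\bt_k=\#\{i:\rho_i\ge k\}$, the filtration bounds translate directly into the inequalities $\sum_{i\le k}\rho^\bt_i\ge\sum_{i\le k}\lambda^\bt_i$ (after the reindexing), i.e. $\rho^\bt\ge\lambda^\bt$, hence $\rho\le\lambda$. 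Here one must be a little careful that the reindexed partial sums of $\lambda^\bt$ coming from $\dim V_k$ really are the top $k$ terms and that these give the correct dominance inequalities; this is again pure bookkeeping with transposes, using $\sum_i\lambda^\bt_i=|\lambda|=d$.

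The main obstacle I anticipate is the $(2)\Rightarrow(3)$ direction: constructing an $x$-stable filtration with \emph{prescribed} graded dimensions that differ from the ``obvious'' ones read off the Jordan form. The elementary-move reduction makes this manageable, but it requires care to verify that an $x$-stable filtration with graded-dimension composition $c$ really can be modified to one with composition $c'$ when $c'$ is obtained from $c$ by one box move, and that $x$-stability is preserved; one wants to choose the new intermediate subspace to contain $x$ of the previous one and to be contained in the preimage of the next. All other steps — $(1)\Leftrightarrow(2)$ by citation, $(3)\Leftrightarrow(4)$ by reindexing, $(3)\Rightarrow(2)$ by rank counting — are routine.
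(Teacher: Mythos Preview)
The paper does not actually prove this lemma: it is introduced as ``well known'' and stated without proof. So there is no paper argument to compare against; your proposal supplies what the paper omits.

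Your overall strategy is sound and is one of the standard routes to this fact. Two places deserve tightening, however. First, the equivalence $(3)\Leftrightarrow(4)$ is not literally ``reindexing $i\mapsto\lambda_1+1-i$'': reversing the index of a filtration does not produce another increasing filtration. What you need is either a duality argument (pass to $x^{*}$ on $V^{*}$, use that $x$ and $x^{*}$ are conjugate, and take annihilators $W_j=V_{\lambda_1-j}^{\perp}$, which reverses the sequence of graded dimensions), or simply prove $(2)\Leftrightarrow(3)$ and $(2)\Leftrightarrow(4)$ separately and let $(3)\Leftrightarrow(4)$ fall out. Second, in your $(3)\Rightarrow(2)$ step, the inequality $\dim\ker x^{k}\ge\dim V_{k}$ gives a lower bound by the sum of the \emph{smallest} $k$ parts of $\lambda^{\bt}$, which does not directly yield $\rho^{\bt}\ge\lambda^{\bt}$. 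The clean inequality is the image bound $\im x^{k}\subset V_{\lambda_1-k}$, which gives
\[
d-\sum_{j=1}^{k}\rho^{\bt}_{j}=\rank x^{k}\le\dim V_{\lambda_1-k}=d-\sum_{j=1}^{k}\lambda^{\bt}_{j},
\]
hence $\sum_{j\le k}\rho^{\bt}_{j}\ge\sum_{j\le k}\lambda^{\bt}_{j}$ for all $k$, i.e.\ $\rho^{\bt}\ge\lambda^{\bt}$, i.e.\ $\rho\le\lambda$. Your ``dually'' gestures at this but does not write it. With these two fixes your argument is complete; the elementary-move construction for $(2)\Rightarrow(3)$ is more laborious than necessary (one can instead invoke the standard fact that an $x$-stable filtration with $x(V_i)\subset V_{i-1}$ and prescribed graded dimensions $(c_i)$ exists iff the sorted partition $\gamma$ satisfies $\gamma^{\bt}\ge\rho$), but it does work.
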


Next, the \emph{enhanced nilpotent cone} associated to $V$ is the variety $V \times \cN_V$.  The group $GL(V)$ acts on this cone with finitely many orbits as well. 
\begin{lem}[{\cite[Proposition~2.3]{ah}, \cite[Theorem 1]{travkin}}]
\label{lem:enh-param}
The $GL(V)$-orbits on $V\times\cN_V$ are in bijection with $\cQ_d$.  For $(\mu;\nu)\in\cQ_d$, an element $(v,x)\in V\times\cN_V$ belongs to the orbit $\cO_{\mu;\nu}$ if and only if there is a basis of $V$ which can be identified with the set of boxes in the diagram of $(\mu;\nu)$, in such a way that $x$ sends a given box to the box on its left, or to zero if there is no box on its left; and $v$ is the sum of the boxes immediately left of the wall.
\end{lem}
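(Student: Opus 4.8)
The Lemma is a classification of pairs $(v,x)\in V\times\cN_V$ up to the diagonal $GL(V)$-action, so the plan is to produce a normal form and show it is a complete invariant. It is convenient to view $V$ as a module over $\C[t]$ with $t$ acting by $x$; then $GL(V)$-orbits of pairs $(v,x)$ correspond to isomorphism classes of pairs $(V,v)$ with $V$ a finite-dimensional $\C[t]$-module on which $t$ is nilpotent. For a bipartition $(\mu;\nu)$ of $d$, the asserted $(\mu;\nu)$-basis unwinds to a decomposition $V=\bigoplus_i\C[t]g_i$ into cyclic submodules with $\dim\C[t]g_i=\mu_i+\nu_i$ (in weakly decreasing order) such that $v=\sum_i x^{\nu_i}g_i$; here $g_i$ is the box at the right-hand end of row $i$, and $x^{\nu_i}g_i$ is the box of row $i$ immediately left of the wall. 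Surjectivity of $(\mu;\nu)\mapsto\cO_{\mu;\nu}$ is immediate, since such a normal form can be written down directly.

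For \emph{existence} of the normal form, start from any decomposition $V=\bigoplus_i\C[t]g_i$ into cyclic submodules, say $\dim\C[t]g_i=r_i$; the component of $v$ in $\C[t]g_i$ is $f_i(t)g_i$ for some $f_i\in\C[t]$, and writing $f_i=t^{c_i}u_i$ with $u_i$ a unit and replacing $g_i$ by $u_i(t)g_i$ (which does not change $\C[t]g_i$) we may assume $v=\sum_i t^{c_i}g_i$ with $0\le c_i\le r_i$. Putting $\mu_i=r_i-c_i$ and $\nu_i=c_i$ yields a multiset of pairs whose coordinate sums form the Jordan type $\rho$ of $x$, but the pairs need not be totally ordered, so $(\mu;\nu)$ need not yet be a bipartition. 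The core step is then a rectification: I would show that such block data can be brought to the unique totally ordered configuration with the same partition $\rho$ of coordinate sums --- in the spirit of the rectification of quasipartitions in \cite[Lemma~2.4]{ah} --- by a sequence of elementary moves, each of which raises one $\mu_i$ by $1$ and lowers the corresponding $\nu_i$ by $1$, and each of which is implemented by an explicit modification of the generators (replacing some $g_i$ by $g_i$ plus a suitable $\C[t]$-multiple of $g_j$) fixing both $x$ and $v$ exactly. One then checks that these moves can be carried out in an order that terminates, at which point the block data is the desired bipartition $(\mu;\nu)$.

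For \emph{uniqueness} --- that distinct bipartitions give distinct orbits --- I would exhibit enough $GL(V)$-invariants of $(v,x)$ to recover $(\mu;\nu)$. The Jordan type of $x$, read off from the numbers $\dim x^jV$, is $\mu+\nu$. To separate $\mu$ from $\nu$, use $\delta_j:=\dim(\C[x]v+x^jV)-\dim x^jV$ for $j\ge0$; a direct computation on the normal form gives $\delta_j=\max\bigl(0,\,\max_i\min(\mu_i,\,j-\nu_i)\bigr)$, and a routine check shows that, once $\mu+\nu$ is known, the sequence $(\delta_j)_{j\ge0}$ determines the whole multiset of pairs $(\mu_i,\nu_i)$ --- here the fact that the pairs are totally ordered (equivalently, that $(\mu;\nu)$ is a bipartition) is exactly what allows one to reconstruct the pairs from the upper envelope of the ``ramps'' $j\mapsto\min(\mu_i,j-\nu_i)$. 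Existence, uniqueness and surjectivity together give the bijection $\cQ_d\isomto\{GL(V)\text{-orbits on }V\times\cN_V\}$, and the description of $\cO_{\mu;\nu}$ via a $(\mu;\nu)$-basis is just the unwound normal form.

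The step I expect to be the real obstacle is this rectification inside the existence proof. The purely combinatorial rectification of a multiset of pairs to a totally ordered configuration is routine; the difficulty is that each elementary move must be implemented by a genuine change of basis of $V$ preserving both the nilpotent operator $x$ and the vector $v$, and the moves must be applied in a consistent, terminating order. (The tempting shortcut of reducing to ordinary Jordan theory for the nilpotent operator on $V\oplus\C$ that carries the extra line to $v$ recovers $\mu+\nu$ together with one additional numerical parameter, but not the full bipartition, and so does not by itself suffice.)
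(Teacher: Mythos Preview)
The paper does not give its own proof of this lemma: it is quoted verbatim from the cited sources \cite{ah} and \cite{travkin}, so there is nothing in the paper to compare your argument against directly.

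That said, your outline is sound and is essentially the argument of \cite[Proposition~2.3]{ah}. The existence step --- Jordan decomposition followed by rectification of the resulting pairs $(\mu_i,\nu_i)$ to a genuine bipartition via generator modifications $g_i \mapsto g_i + p(t)g_j$ --- is exactly what is carried out there, and your identification of this as the crux is accurate. One reassuring observation you might add: since the rectified value is $\mu_i' = \delta_{\lambda_i} = \max_k \min(\mu_k,\lambda_i-\nu_k) \ge \min(\mu_i,\lambda_i-\nu_i) = \mu_i$, the moves only ever \emph{raise} $\mu_i$, so termination is immediate.

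Your uniqueness invariant $\delta_j = \dim(\C[x]v + x^jV) - \dim x^jV$ is correct, and the ``routine check'' is in fact a one-liner: for a normal $(\mu;\nu)$-basis one has $\delta_{\lambda_i} = \max_k \min(\mu_k, \lambda_i - \nu_k) = \mu_i$, using only that $\mu$ and $\nu$ are weakly decreasing. (The same identity, phrased as $\mu_i = \min\{s : x^s v \in \im(x^{\lambda_i})\}$, reappears later in the paper in the proof of Proposition~\ref{prop:union1}.) So the envelope-of-ramps discussion, while not wrong, is more than you need.
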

\noindent
Such a basis is called a \emph{normal basis} for $(v,x)$. If $(v,x)\in\cO_{\mu;\nu}$, we will refer to $(\mu;\nu)$ as the \emph{type} of $(v,x)$.

From this description, it is clear that
the projection map $\bar\pi^V: V \times \cN_V \to \cN_V$ satisfies
\begin{equation} \label{eqn:bipartitionsum}
\bar\pi^V(\cO_{\mu;\nu}) = \cO_{\mu+\nu}.
\end{equation}

Note that we can identify the ordinary nilpotent cone $\cN_V$ with the closed subvariety $\{0\}\times\cN_V$ of the enhanced nilpotent cone $V\times\cN_V$. Under this identification, the orbit $\cO_\lambda\subset\cN_V$ corresponds to the orbit $\cO_{\varnothing;\lambda}\subset V\times\cN_V$. Thus all our statements about enhanced nilpotent orbits and their closures will include a (usually well-known) statement about the unenhanced case. 

To state the analogue of Lemma~\ref{lem:nilp-crit}, we need the partial order on $\cQ_d$ defined as follows:
$(\rho; \sigma) \le (\mu; \nu)$ if and only if for all $k \ge 0$, 
\[
\begin{split}
\sum_{i=1}^k (\rho_i + \sigma_i) &\le
\sum_{i=1}^k (\mu_i + \nu_i),\text{ and}\\
\sum_{i=1}^k (\rho_i + \sigma_i) + \rho_{k+1} &\le 
\sum_{i=1}^k (\mu_i + \nu_i) + \mu_{k+1}.
\end{split}
\]

\begin{lem}[{\cite[Theorem~3.9, Corollary~3.4]{ah}}]\label{lem:enhcl-crit}
For $(v,x) \in V \times \cN_V$, the following conditions are equivalent:
\begin{enumerate}
\item $(v,x) \in \overline{\cO_{\mu;\nu}}$.
\item The type $(\rho;\sigma)$ of $(v,x)$ satisfies $(\rho;\sigma)\leq(\mu;\nu)$.
\item There is an $x$-stable $|\mu|$-dimensional subspace $W \subset V$ containing $v$ such that 
\begin{enumerate}
\item the Jordan type $\bar\mu$ of $x|_W$ satisfies $\bar\mu \le \mu$, and
\item the Jordan type $\bar\nu$ of $x|_{V/W}$ satisfies $\bar\nu \le \nu$.
\end{enumerate}
\end{enumerate}
\end{lem}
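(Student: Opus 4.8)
The plan is to prove the equivalence of the three conditions by establishing the cycle (1) $\Rightarrow$ (2) $\Rightarrow$ (3) $\Rightarrow$ (1), using the explicit normal-basis description of orbits from Lemma~\ref{lem:enh-param}. The combinatorial ingredient—that the closure relation $\overline{\cO_{\mu;\nu}}$ corresponds to the partial order $(\rho;\sigma)\le(\mu;\nu)$—is quoted from~\cite[Theorem~3.9]{ah}, so the equivalence (1)$\Leftrightarrow$(2) is immediate. The real work is to show that (2) and (3) say the same thing.

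For (3)$\Rightarrow$(2): given an $x$-stable subspace $W\ni v$ of dimension $|\mu|$ with $x|_W$ of type $\bar\mu\le\mu$ and $x|_{V/W}$ of type $\bar\nu\le\nu$, I would first observe that $(v,x|_W)\in V\times\cN_W$ has some type $(\rho';\sigma')$ with $\rho'+\sigma'=\bar\mu$, and that the whole pair $(v,x)$ has type $(\rho;\sigma)$. The key point is to relate $(\rho;\sigma)$ to the data on $W$ and $V/W$. One shows $\sum_{i=1}^k(\rho_i+\sigma_i)\le\sum_{i=1}^k(\bar\mu_i+\bar\nu_i)\le\sum_{i=1}^k(\mu_i+\nu_i)$ by a rank computation: $\sum_{i>k}((\rho+\sigma)^\bt)_i=\dim\ker(x^k)$ and this is at most $\dim\ker((x|_W)^k)+\dim\ker((x|_{V/W})^k)$, which translates into the first family of inequalities for the dominance order on $\rho+\sigma$. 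For the second family, involving $\mu_{k+1}$, one needs the analogous statement for the pair; here I would use that $\rho$ is controlled by how $v$ sits relative to the flag $\ker x\subset\ker x^2\subset\cdots$, together with the fact that $v\in W$, so the "$v$-part" of $(v,x)$ is already captured inside $W$. Concretely, $\sum_{i\le k}(\rho_i+\sigma_i)+\rho_{k+1}$ counts (via transposes) a certain dimension built from $\ker x^k$, $\ker x^{k+1}$, and the position of $v$; bounding each piece using $W$ and $V/W$ gives the second inequality with $\bar\mu_{k+1}\le\mu_{k+1}$ on the right.

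For (2)$\Rightarrow$(3), which I expect to be the main obstacle, the natural approach is constructive: assuming $(v,x)$ has type $(\rho;\sigma)\le(\mu;\nu)$, exhibit the subspace $W$ directly using a normal basis for $(v,x)$. Identify $V$ with the boxes of the diagram of $(\rho;\sigma)$; then $x$ is the "shift-left" operator and $v$ is the sum of the boxes just left of the wall. I want to choose $W$ to be spanned by a union of $x$-stable "sub-rows" so that it contains $v$, has dimension $|\mu|$, and has $x|_W$ of type $\le\mu$ and $x|_{V/W}$ of type $\le\nu$. This is essentially a combinatorial selection problem: each row of the bipartition $(\rho;\sigma)$ contributes a $\mu$-part (the boxes through the wall, which must go into $W$ to capture $v$) and a $\nu$-part; one needs to show that the inequalities defining $(\rho;\sigma)\le(\mu;\nu)$ are exactly what is needed to permute/truncate these row-contributions into diagrams dominated by $\mu$ and $\nu$ respectively. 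The subtle case is when a row of $(\rho;\sigma)$ has empty $\mu$-side (so $v$ has no component there); such rows may be freely assigned to either $W$ or $V/W$, and this flexibility is what makes the second family of inequalities (the one with the extra $\rho_{k+1}$ term, which ``reserves" one more box for the $\mu$-side) translate into a valid choice. I would organize this by a greedy row-by-row argument, filling $W$ with the largest available pieces subject to containing $v$, and then verify the dominance conditions for $x|_W$ and $x|_{V/W}$ against $\mu$ and $\nu$ using Lemma~\ref{lem:nilp-crit}(3) for the induced filtrations.

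An alternative, possibly cleaner route for (2)$\Rightarrow$(3) is to avoid explicit row-surgery and instead argue by degeneration: (2) says $(v,x)$ lies in the closure of $\cO_{\mu;\nu}$, and for a point of $\cO_{\mu;\nu}$ itself the required $W$ exists by definition (take $W$ to be the span of the boxes in the $\mu$-part of a normal basis, which automatically contains $v$, has $x|_W$ of type exactly $\mu$, and $x|_{V/W}$ of type exactly $\nu$). One then needs the existence of such a $W$ to be a closed condition, i.e.\ to pass to the limit: this is a standard argument using the projective variety parametrizing $x$-stable subspaces of dimension $|\mu|$ containing $v$, with the functions $\dim\ker((x|_W)^k)$ and $\dim\ker((x|_{V/W})^k)$ being lower/upper semicontinuous, so that the conditions $\bar\mu\le\mu$ and $\bar\nu\le\nu$ are preserved in the limit. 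I would most likely present (2)$\Rightarrow$(3) via this semicontinuity argument and (3)$\Rightarrow$(2) via the rank computation sketched above, since together they sidestep the bookkeeping of the explicit combinatorial construction while still using only Lemmas~\ref{lem:nilp-crit} and~\ref{lem:enh-param} and the cited results from~\cite{ah}.
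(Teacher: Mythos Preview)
The paper does not prove this lemma; it is quoted directly from~\cite[Theorem~3.9, Corollary~3.4]{ah}, so there is no argument in the paper to compare against. Your overall architecture---cite \cite{ah} for (1)$\Leftrightarrow$(2), then handle (1)$\Leftrightarrow$(3) separately---is sound, and your properness/degeneration argument for (1)$\Rightarrow$(3) is correct: the incidence variety of triples $(v,x,W)$ with $v\in W$, $xW\subset W$, $\dim W=|\mu|$, $x|_W\in\overline{\cO_\mu}$, $x|_{V/W}\in\overline{\cO_\nu}$ is closed in $(V\times\cN_V)\times\Gr_{|\mu|}(V)$, the Grassmannian factor is projective, and the image visibly contains $\cO_{\mu;\nu}$.

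There is, however, a genuine gap in your (3)$\Rightarrow$(2) sketch. The kernel inequality $\dim\ker(x^k)\le\dim\ker((x|_W)^k)+\dim\ker((x|_{V/W})^k)$ is correct, but it does \emph{not} yield $\rho+\sigma\le\bar\mu+\bar\nu$ in dominance: translated into partition data it says $\sum_{j\le k}((\rho+\sigma)^\bt)_j\le\sum_{j\le k}(\bar\mu^\bt)_j+\sum_{j\le k}(\bar\nu^\bt)_j$, whereas dominance requires the \emph{lower} bound $\sum_{j\le k}((\rho+\sigma)^\bt)_j\ge\sum_{j\le k}((\bar\mu+\bar\nu)^\bt)_j$, and since $(\bar\mu+\bar\nu)^\bt=\bar\mu^\bt\cup\bar\nu^\bt$ the two are not comparable. (Your formula ``$\sum_{i>k}((\rho+\sigma)^\bt)_i=\dim\ker(x^k)$'' also has the index range wrong.) The second family of inequalities---the one carrying the extra $\mu_{k+1}$, where the enhancement actually matters---is not addressed beyond a gesture. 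A clean fix is to push your incidence-variety idea further and use it for \emph{both} directions: the incidence variety fibres over $\Gr_{|\mu|}(V)$ with irreducible fibre $W\times\overline{\cO_\mu}\times\Hom(V/W,W)\times\overline{\cO_\nu}$, so it is irreducible; a short computation using $n(\mu+\nu)=n(\mu)+n(\nu)$ and~\eqref{eqn:dim-norb} shows its dimension equals $\dim\cO_{\mu;\nu}$, so its (closed, irreducible) image in $V\times\cN_V$ is exactly $\overline{\cO_{\mu;\nu}}$, giving (3)$\Rightarrow$(1) without any rank bookkeeping.
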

\noindent
Here, and subsequently, when $x$ is a nilpotent endomorphism of $V$ and $W$ is an $x$-stable subspace, $x|_W$ and $x|_{V/W}$ denote the induced nilpotent endomorphisms of $W$ and of $V/W$.

\subsection{Covering relations}
\label{sect:covering}

We have seen in Lemmas~\ref{lem:nilp-crit} and \ref{lem:enhcl-crit} that
the inclusion relations among ordinary (respectively, enhanced) nilpotent orbit closures correspond to a combinatorial partial order on the set of partitions (respectively, bipartitions). For later use, we recall the 
covering relations which generate these partial orders. Geometrically, these covering relations correspond to minimal degenerations of orbits.

It is well known that the covering relations $\lambda' < \lambda$ in the dominance partial order on $\cP_d$ are those in which a single box in the diagram for $\lambda$ moves down from an outside corner to the first available inside corner, resulting in the diagram of $\lambda'$:
\[
\begin{tableau}
\caprow{\q\q\lambda}
\row{\q\q\q\q}
\row{\c\c\c\bce{\bullet}}
\row{\c\c\q\q}
\row{\c\q\q\q}
\end{tableau}
\quad\leadsto\,
\begin{tableau}
\caprow{\q\q\lambda'}
\row{\q\q\q\q}
\row{\q\c\c\c}
\row{\q\c\c\bce{\bullet}}
\row{\q\c\q\q}
\end{tableau}
\]

It is proved in \cite[Lemma 3.7]{ah} that there are 4 types of covering relations which generate the partial order on $\cQ_d$. We recall the pictorial description of these covering relations, putting the diagram of a bipartition $(\mu; \nu)$ on the left and the diagram of $(\mu'; \nu') < (\mu; \nu)$ on the right. In type (1), a single box moves down on the $\mu$ side of the dividing line, from an
outside corner to the first available inside corner, there being no inside or outside
corners on the $\nu$ side between these two positions:
\[
\begin{tableau}
\row{\c\c\rc\c\c}
\row{\bce{\bullet}\c\rc\c\c}
\row{\q\q\rc\c\c}
\row{\q\q\rc}
\row{\q\q\rc}
\end{tableau}
\quad
\rightsquigarrow
\quad
\begin{tableau}
\row{\c\c\rc\c\c}
\row{\q\c\rc\c\c}
\row{\q\bce{\bullet}\rc\c\c}
\row{\q\q\rc}
\row{\q\q\rc}
\end{tableau}
\]
Type (2) is analogous, but with the box moving on the 
$\nu$ side of the dividing line:
\[
\begin{tableau}
\row{\c\c\rc\c\c}
\row{\c\c\rc\c\c}
\row{\q\q\rc\c\bce{\bullet}}
\row{\q\q\rc}
\row{\q\q\rc}
\end{tableau}
\quad
\rightsquigarrow
\quad
\begin{tableau}
\row{\c\c\rc\c\c}
\row{\c\c\rc\c\c}
\row{\q\q\rc\c}
\row{\q\q\rc\bce{\bullet}}
\row{\q\q\rc}
\end{tableau}
\]
In type (3), a column of boxes (possibly a single box) moves directly to
the right, from an outside corner on the $\mu$ side to an inside corner
on the $\nu$ side:
\[
\begin{tableau}
\row{\bce{\bullet}\c\rc\c\c}
\row{\bce{\bullet}\c\rc\c\c}
\row{\q\q\rc\c\c}
\row{\q\q\rc}
\row{\q\q\rc}
\end{tableau}
\quad
\rightsquigarrow
\quad
\begin{tableau}
\row{\q\c\rc\c\c\bce{\bullet}}
\row{\q\c\rc\c\c\bce{\bullet}}
\row{\q\q\rc\c\c}
\row{\q\q\rc}
\row{\q\q\rc}
\end{tableau}
\]
In type (4), a column of boxes (possibly a single box) moves to the left
and down one row, from an outside corner on the $\nu$ side to an inside corner
on the $\mu$ side:
\[
\begin{tableau}
\row{\c\c\rc\c\c}
\row{\c\c\rc\c\bce{\bullet}}
\row{\q\q\rc\c\bce{\bullet}}
\row{\q\q\rc}
\row{\q\q\rc}
\end{tableau}
\quad
\rightsquigarrow
\quad
\begin{tableau}
\row{\c\c\rc\c\c}
\row{\c\c\rc\c}
\row{\q\bce{\bullet}\rc\c}
\row{\q\bce{\bullet}\rc}
\row{\q\q\rc}
\end{tableau}
\]

\subsection{Nilpotent pairs and enhanced nilpotent pairs}
\label{sect:nilppairs}

Now, let $V$ and $V'$ be complex vector spaces, say of dimensions $d, d'$, and let
\[
\cN_{V,V'} = \{ (x,y) \in \Hom(V,V') \times \Hom(V',V) \mid \text{$xy$ is nilpotent} \}.
\]
Note that $xy$ is nilpotent if and only if $yx$ is nilpotent, so we will make no distinction between $\cN_{V,V'}$ and $\cN_{V',V}$. Elements $(x,y)\in\cN_{V,V'}$ are known as \emph{nilpotent pairs}.  The group $GL(V) \times GL(V')$ acts on $\cN_{V,V'}$ with finitely many orbits.
\begin{lem}[{\cite[Section 4]{kp}}] \label{lem:signedpartition}
The $(GL(V) \times GL(V'))$-orbits in $\cN_{V,V'}$ are in bijection with $\SP_{d,d'}$.  For $(\lambda,\epsilon) \in \SP_{d,d'}$, a nilpotent pair $(x,y)\in\cN_{V,V'}$ belongs to the orbit $\cC_{\lambda,\epsilon}$ if and only if there is a basis of $V$ which can be identified with the set of \textup{`$+$'} boxes in the diagram of $(\lambda,\epsilon)$, and a basis of $V'$ which can be identified with the set of \textup{`$-$'} boxes, in such a way that $x$ sends a given \textup{`$+$'} box to the \textup{`$-$'} box on its left, or to zero if there is no box on its left, and $y$ sends a given \textup{`$-$'} box to the \textup{`$+$'} box on its left, or to zero if there is no box on its left.
\end{lem}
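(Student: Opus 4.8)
The plan is to reduce the statement to an equivariant form of the Jordan normal form theorem. Given a nilpotent pair $(x,y)\in\cN_{V,V'}$, form the $\Z/2$-graded vector space $M=V\oplus V'$ (with $V$ even, $V'$ odd) and the odd endomorphism $T=x+y\in\End(M)$. Since $T^{2n}$ acts as $(yx)^n$ on $V$ and as $(xy)^n$ on $V'$, and $xy$ is nilpotent if and only if $yx$ is (as $(yx)^{n+1}=y(xy)^nx$), the operator $T$ is nilpotent; conversely every odd nilpotent $T$ on $M$ comes from a unique pair $(x,y)\in\cN_{V,V'}$ by restriction. The group $GL(V)\times GL(V')$ is precisely the group of even automorphisms of $M$, and under the above dictionary its conjugation action on pairs $(M,T)$ matches the action on $\cN_{V,V'}$. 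So the lemma is equivalent to: the pairs $(M,T)$ with $T$ odd and nilpotent are classified up to even isomorphism by the signed partition whose rows, of size $\lambda_i$ and leading sign $\epsilon(i)$, record the sizes of the $T$-Jordan blocks together with, for each block, whether its socle vector $e_1$ (the box at the left end) lies in $V$, giving \textup{`$+$'}, or in $V'$, giving \textup{`$-$'}.

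The key step is the graded Jordan form: a nilpotent odd endomorphism $T$ of a finite-dimensional $\Z/2$-graded vector space $M$ admits a \emph{homogeneous} Jordan basis, i.e.\ $M=\bigoplus_s M_s$ with each $M_s$ a $T$-stable graded subspace having a basis $e^{(s)}_1,\ldots,e^{(s)}_{m_s}$ of homogeneous vectors with $Te^{(s)}_i=e^{(s)}_{i-1}$ (and $Te^{(s)}_1=0$), which forces the parities to alternate along each chain. This is the ordinary proof of the Jordan form theorem run in the category of super vector spaces: choose a homogeneous vector $v$ of maximal $T$-order $k$ (possible since, writing $v=v_0+v_1$, the vectors $T^{k+1}v_0$ and $T^{k+1}v_1$ lie in complementary graded pieces and so both vanish, while $T^kv\neq 0$ forces $T^kv_0\neq 0$ or $T^kv_1\neq 0$); then $\Span(v,Tv,\ldots,T^kv)$ is a graded $T$-stable cyclic direct summand, and one splits it off and inducts on $\dim M$. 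Equivalently, this is the structure theorem for finitely generated graded modules over the graded ring $\C[z]$ with $z$ in odd degree, acting via $z\mapsto T$.

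Granting this, uniqueness of the signed partition follows from graded Krull--Schmidt: the multiset of pairs (block size, socle parity) is an isomorphism invariant of $(M,T)$. Sorting the sizes into decreasing order yields a partition $\lambda$, and the socle parities give a function $\epsilon$ into $\{+,-\}$; declaring that among blocks of equal size the \textup{`$+$'} ones come first makes $(\lambda,\epsilon)$ a genuinely well-defined element of $\SP$ (the inequality $i<j$ in the definition is only a normalization of the block labeling). Counting homogeneous basis vectors of each parity shows $|\lambda^{(+)}|=\dim V=d$ and $|\lambda^{(-)}|=\dim V'=d'$, so $(\lambda,\epsilon)\in\SP_{d,d'}$, and identifying the $e^{(s)}_i$ with the boxes of the diagram of $(\lambda,\epsilon)$ ($\textup{`$+$'}$ boxes with vectors of $V$, $\textup{`$-$'}$ boxes with vectors of $V'$) produces exactly a normal basis as in the statement. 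Conversely, for any $(\lambda,\epsilon)\in\SP_{d,d'}$ one constructs a nilpotent pair with a tautological normal basis by taking $V$ and $V'$ to be the free vector spaces on the $\textup{`$+$'}$ boxes and the $\textup{`$-$'}$ boxes and letting $x$, $y$ be ``move one box to the left''; the signature condition gives $\dim V=d$, $\dim V'=d'$, and alternation of signs makes $x$ land in $V'$ and $y$ in $V$. Two pairs admitting normal bases for the same $(\lambda,\epsilon)$ differ by the change-of-basis element of $GL(V)\times GL(V')$, hence lie in one orbit; and since $GL(V)\times GL(V')$ transports normal bases, the set $\cC_{\lambda,\epsilon}$ of pairs admitting a normal basis for $(\lambda,\epsilon)$ is a single orbit. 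These sets partition $\cN_{V,V'}$ by the existence-and-uniqueness above, which is the asserted bijection.

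The main obstacle is the graded Jordan form step: one must check that the usual Jordan-form argument is genuinely compatible with the $\Z/2$-grading (choosing homogeneous generators, splitting off graded cyclic summands), and then carefully reconcile the combinatorial output (block sizes, socle parities, the ceiling/floor rule distinguishing $\lambda^{(+)}$ from $\lambda^{(-)}$, and the ordering convention on $\epsilon$) with the definition of $\SP_{d,d'}$. None of this is deep, but it is the part of the argument where the bookkeeping must be done with care.
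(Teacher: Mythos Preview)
Your argument is correct. Note, however, that the paper does not actually give a proof of this lemma: it is stated with a citation to \cite[Section~4]{kp} and is treated as a known result. Your $\Z/2$-graded Jordan form approach --- viewing $(x,y)$ as an odd nilpotent endomorphism of $V\oplus V'$ and running the usual Jordan argument in the category of super vector spaces --- is precisely the standard way to establish this classification, and is in essence the argument Kraft and Procesi give (they phrase it in the equivalent language of indecomposable representations of the quiver $\bullet\rightleftarrows\bullet$). The bookkeeping you flag at the end (matching socle parities to $\epsilon$, the $\lceil\cdot\rceil/\lfloor\cdot\rfloor$ count of `$+$' versus `$-$' boxes, the ordering convention on equal parts) is handled correctly.
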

 
Consider the maps $\bar p_V^{V,V'}: \cN_{V,V'} \to \cN_V$ and $\bar p_{V'}^{V,V'}: \cN_{V,V'} \to \cN_{V'}$ given by
\[
\bar p_V^{V,V'}(x,y) = yx
\qquad\text{and}\qquad
\bar p_{V'}^{V,V'}(x,y) = xy.
\]
Recall that the signed partition $(\lambda,\epsilon)$ determines subordinate partitions $\lambda^{(+)}$ and $\lambda^{(-)}$. Using the basis interpretation of Lemma~\ref{lem:signedpartition}, it is easy to see that
\begin{equation}
\bar p_V^{V,V'}(\cC_{\lambda,\epsilon}) = \cO_{\lambda^{(+)}}
\qquad\text{and}\qquad
\bar p_{V'}^{V,V'}(\cC_{\lambda,\epsilon}) = \cO_{\lambda^{(-)}}.
\end{equation}

Next, we consider the variety $V \times \cN_{V,V'}$, known as the variety of \emph{enhanced nilpotent pairs}. (Note that this definition is asymmetric in $V$ and $V'$.) The group $GL(V) \times GL(V')$ acts on this variety with finitely many orbits. These have the following parametrization due to Johnson (recall Remark~\ref{rmk:johnsonconvention}), combining aspects of Lemmas~\ref{lem:enh-param} and \ref{lem:signedpartition}. 
\begin{lem}[{\cite[Corollary 4.13]{johnson}}] \label{lem:johnson}
The $(GL(V) \times GL(V'))$-orbits in $V \times \cN_{V,V'}$ are in bijection with $\SQ_{d,d'}$. For $(\mu;\nu,\epsilon) \in \SQ_{d,d'}$, the element $(v,x,y)\in V \times \cN_{V,V'}$ belongs to the orbit $\cC_{\mu;\nu,\epsilon}$ if and only if there is a basis of $V$ which can be identified with the set of \textup{`$+$'} boxes in the diagram of $(\mu;\nu,\epsilon)$, and a basis of $V'$ which can be identified with the set of \textup{`$-$'} boxes, in such a way that $x$ and $y$ move boxes to the left as in Lemma~\ref{lem:signedpartition}, and $v$ is the sum of the boxes immediately left of the wall.
\end{lem}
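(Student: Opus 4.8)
The plan is to prove Lemma~\ref{lem:johnson} by combining the combinatorial machinery set up in the earlier subsections: the parametrization of $(GL(V)\times GL(V'))$-orbits on $\cN_{V,V'}$ by signed partitions (Lemma~\ref{lem:signedpartition}), the parametrization of $GL(V)$-orbits on $V\times\cN_V$ by bipartitions (Lemma~\ref{lem:enh-param}), and the subordinate-bipartition apparatus attached to a signed quasibipartition. First I would reduce to a normal-form statement: given $(v,x,y)\in V\times\cN_{V,V'}$, I want to produce a pair of bases of $V$ and $V'$ indexed by the `$+$' and `$-$' boxes of some $(\mu;\nu,\epsilon)\in\SQ_{d,d'}$ with the prescribed action of $x$, $y$, and the prescribed expression for $v$. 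The uniqueness half (that the type $(\mu;\nu,\epsilon)$ is well-defined and distinct orbits get distinct signed quasibipartitions) follows once one checks that $(\mu;\nu,\epsilon)$ can be recovered from intrinsic numerical invariants of $(v,x,y)$ — e.g.\ the Jordan types of $yx$ and $xy$ (which recover $(\mu+\nu,\epsilon)$ via Lemma~\ref{lem:signedpartition} applied to the pair $(x,y)$), together with the dimensions of the spaces spanned by $v, (yx)v, (xy)yv,\ldots$ under iterated application of $x$ and $y$, which pin down where the wall falls in each row and hence separate $\mu$ from $\nu$.

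The heart of the argument is the existence of a normal basis, and here the natural strategy is to mimic the proof of Lemma~\ref{lem:signedpartition} (the Kraft--Procesi normal form for nilpotent pairs) while keeping track of the extra vector $v$, exactly as Lemma~\ref{lem:enh-param} refines the Jordan normal form. Concretely, I would set $z = yx \in \cN_V$ and $z' = xy\in\cN_{V'}$, and first choose a $z$-normal basis of $V$ adapted to $v$ in the sense of Lemma~\ref{lem:enh-param}, so that $V$ decomposes into $z$-cyclic strings, a designated subset of which have $v$ as the sum of their ``distinguished'' vectors. One then has to interleave each such string with the corresponding string in $V'$ via $x$ and $y$: a $z$-string of length $k$ in $V$ together with its image under $x$ gives a zig-zag of boxes, and the ambiguity in how it sits relative to its partner string (whether it starts with a `$+$' on the $V$-side that $x$ kills, or a `$-$' on the $V'$-side that $y$ kills) is precisely the sign $\epsilon(i)$; the constraint relating $v$ to the column immediately left of the wall forces the sign conventions recorded in the definition of a signed quasibipartition (the clause $\epsilon(i)=+$ when $\mu_i$ is odd, etc.). I would organize this as an induction on $\dim V + \dim V'$, splitting off a longest string and its partner and invoking the inductive hypothesis on the complement, taking care that a complement can be chosen that is simultaneously $x$-, $y$-stable and avoids (the relevant component of) $v$.

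The main obstacle I expect is the bookkeeping at the ``wall'': one must verify that the sign and parity conditions defining $\SQ_{d,d'}$ are exactly what is needed for the normal form to exist and be unique, i.e.\ that the $\epsilon$ prescribed in the definition of a signed quasibipartition is forced, and conversely that every $(\mu;\nu,\epsilon)\in\SQ_{d,d'}$ is actually realized. In particular the three-way case analysis for $\epsilon(i)$ when $\mu_i=0$ — depending on whether some earlier $\nu_j=\nu_i-1$ or some later $\mu_j=1$ — corresponds to subtle compatibility constraints between a string lying entirely on the $\nu$-side and its neighbours, and disentangling these is where the argument is most delicate. Since Johnson has already carried this out in \cite[Corollary 4.13]{johnson} (modulo the change of convention explained in Remark~\ref{rmk:johnsonconvention}), the cleanest route is to cite that result directly and merely indicate the dictionary between his ``striped $2$-bipartitions'' and our signed quasibipartitions; a from-scratch proof along the inductive lines above is possible but would essentially duplicate his work.
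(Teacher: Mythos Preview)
The paper does not give its own proof of this lemma: it is stated with a citation to \cite[Corollary~4.13]{johnson} and no argument is supplied. Your proposal correctly arrives at the same conclusion---cite Johnson and explain the dictionary via Remark~\ref{rmk:johnsonconvention}---so there is nothing to compare. The inductive sketch you outline is plausible and in the spirit of how Lemmas~\ref{lem:enh-param} and~\ref{lem:signedpartition} are combined, but since the paper treats this as a black box from \cite{johnson}, any such argument goes beyond what the paper does.
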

\noindent
Recall that by definition every box immediately left of the wall contains a `$+$', so corresponds to a basis element of $V$, as required.

We have maps $p_V^{V,V'}: V \times \cN_{V,V'} \to V \times \cN_V$ and $p_{V'}^{V,V'} : V \times \cN_{V,V'} \to V' \times \cN_{V'}$ given by
\[
p_V^{V,V'}(v, x, y) = (v, yx)
\qquad\text{and}\qquad
p_{V'}^{V,V'}(v, x, y) = (xv, xy).
\]
We also have the map $\pi^{V,V'}: V \times \cN_{V,V'} \to \cN_{V,V'}$ given by projection onto the second factor.  These maps have the expected compatibilities:
\begin{equation}
p_V^{V,V'}(\cC_{\mu;\nu,\epsilon}) = \cO_{\mu^{(+)};\nu^{(+)}},
\
p_{V'}^{V,V'}(\cC_{\mu;\nu,\epsilon}) = \cO_{\mu^{(-)};\nu^{(-)}},
\
\pi^{V,V'}(\cC_{\mu;\nu,\epsilon}) = \cC_{\mu+\nu;\epsilon}.
\end{equation}
Of course, one could consider enhanced nilpotent pair orbits in $V' \times \cN_{V,V'}$ instead, and thus define maps $p_{V'}^{V',V}$, $p_V^{V',V}$, and $\pi^{V',V}$. In this case, one must remember to reverse the meaning of the signs, so that the `$+$' label is associated with $V'$ and the `$-$' label with $V$.

\subsection{Orbit dimensions}

Let $\lambda \in \cP_d$.  The dimension of the $GL(V)$-orbit $\cO_\lambda \subset \cN_V$ is given by the following well-known formula:
\begin{equation}\label{eqn:dim-orb}
\dim \cO_\lambda = d^2 - d - 2n(\lambda).
\end{equation}
Next, let $(\mu;\nu) \in \cQ_d$.  From~\cite[Proposition~2.8]{ah}, we have
\begin{equation}\label{eqn:dim-norb}
\dim \cO_{\mu;\nu} = d^2 - d - 2n(\mu+\nu) + |\mu| = \dim \bar\pi^V(\cO_{\mu;\nu}) + |\mu|.
\end{equation}
Moreover, the `extra' term $|\mu|$ has the following interpretation: for a point $(v,x) \in \cO_{\mu;\nu}$, let
\begin{equation}\label{eqn:end-cent}
E^x = \{ a \in \End(V) \mid ax = xa \}.
\end{equation}
Then the subspace $E^xv \subset V$ has dimension $|\mu|$.

\begin{lem}\label{lem:dim-norb}
If $\cO_{\rho;\sigma} \subset \overline{\cO_{\mu;\nu}}$, then we have:
\begin{align*}
\dim \cO_{\rho;\sigma} + |\rho| &\le \dim \cO_{\mu;\nu} + |\mu|, \\
|\rho| - n(\rho+\sigma) &\le |\mu| - n(\mu + \nu).
\end{align*}
\end{lem}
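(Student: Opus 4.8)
The plan is to deduce both inequalities from the dimension formula~\eqref{eqn:dim-norb} together with the closure criterion of Lemma~\ref{lem:enhcl-crit}. Write $\lambda=\mu+\nu$ and $\tau=\rho+\sigma$, so that~\eqref{eqn:dim-norb} reads $\dim\cO_{\mu;\nu}+|\mu| = d^2-d-2n(\lambda)+2|\mu|$ and similarly for $(\rho;\sigma)$. Thus the first claimed inequality, $\dim\cO_{\rho;\sigma}+|\rho| \le \dim\cO_{\mu;\nu}+|\mu|$, is equivalent to
\[
n(\lambda)-|\mu| \le n(\tau)-|\rho|,
\]
i.e.\ to $|\rho|-n(\tau) \le |\mu|-n(\lambda)$, which is precisely the \emph{second} claimed inequality. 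So the two inequalities are in fact equivalent, and it suffices to prove one of them. I will prove the second, $|\rho|-n(\tau)\le|\mu|-n(\lambda)$.

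First I would reduce to covering relations. Since $\cO_{\rho;\sigma}\subset\overline{\cO_{\mu;\nu}}$ means $(\rho;\sigma)\le(\mu;\nu)$ in the partial order on $\cQ_d$ (Lemma~\ref{lem:enhcl-crit}), and this order is generated by the covering relations of types~(1)--(4) described in Section~\ref{sect:covering}, it is enough to verify the inequality when $(\rho;\sigma)<(\mu;\nu)$ is a single covering relation; the general case follows by summing the increments along a saturated chain. For a covering relation we track how the quantity $|\mu|-n(\mu+\nu)$ changes. The term $|\mu|$ is simply the number of boxes strictly to the left of the wall, and $n(\mu+\nu)=\sum_i\binom{((\mu+\nu)^\bt)_i}{2}$ depends only on the \emph{shape} $\mu+\nu$ obtained by forgetting the wall. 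For each of the four covering moves I would compute $\Delta|\mu|$ and $\Delta n(\mu+\nu)$ directly from the pictures: in type~(1) a box moves down on the $\mu$-side, so $|\mu|$ is unchanged while the shape changes by a standard ``box moves down'' move, which strictly increases $n$; in type~(2) neither $|\mu|$ nor the shape changes on the $\mu$-count but the shape does change, again increasing $n$ (here one must be slightly careful, as the box that moves is on the $\nu$-side but the shape $\mu+\nu$ still records it); in type~(3) a column of height $h$ moves rightward across the wall, so $|\mu|$ drops by $h$ while the shape $\mu+\nu$ is unchanged, giving $\Delta(|\mu|-n(\mu+\nu))=-h<0$; in type~(4) a column of height $h$ on the $\nu$-side moves left and down one row, so $|\mu|$ increases by $h$ while the shape undergoes a ``column drops one row'' move whose effect on $n$ I would read off from the $\binom{\cdot}{2}$ formula and check is at least $h$.

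The main obstacle is the type~(4) bookkeeping: there $|\mu|$ goes \emph{up}, so one needs the simultaneous increase in $n(\mu+\nu)$ to dominate it, and this requires an honest computation with the transpose partition and the convexity of $k\mapsto\binom{k}{2}$. Concretely, if a column of $h$ boxes in columns-position, say, moves from row $r$ down to row $r+1$ (in the shape $\mu+\nu$), then several of the column-lengths $(\mu+\nu)^\bt$ increase by $1$, and because the move is an allowed minimal degeneration the relevant column-lengths are large enough that each unit increment contributes at least $1$ to $\Delta n$; summing over the $h$ affected columns gives $\Delta n(\mu+\nu)\ge h$, hence $\Delta(|\mu|-n(\mu+\nu))\le 0$. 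Types~(1) and~(2) are easier since $|\mu|$ does not increase and $n$ strictly increases, so those increments are negative. Assembling the four cases and summing along a chain from $(\mu;\nu)$ down to $(\rho;\sigma)$ yields $|\rho|-n(\rho+\sigma)\le|\mu|-n(\mu+\nu)$, and by the equivalence noted in the first paragraph this simultaneously gives $\dim\cO_{\rho;\sigma}+|\rho|\le\dim\cO_{\mu;\nu}+|\mu|$, completing the proof.
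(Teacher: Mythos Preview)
Your approach matches the paper's: show the two inequalities are equivalent via~\eqref{eqn:dim-norb}, then check each of the four covering relations. Types~(1)--(3) are fine. The type~(4) bookkeeping, however, contains a wrong intermediate claim: you say ``several of the column-lengths $(\mu+\nu)^\bt$ increase by~$1$'' and then sum over ``the $h$ affected columns.'' But in the shape $\mu+\nu$, a type~(4) move of a column of $h$ boxes (from rows $r,\dots,r+h-1$ on the $\nu$-side to rows $r+1,\dots,r+h$ on the $\mu$-side) has the net effect of removing one box from row~$r$ and adding one to row~$r+h$; the intermediate rows each lose a box on the $\nu$-side and gain one on the $\mu$-side, so their lengths are unchanged. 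Thus exactly one column-length decreases and exactly one increases --- there are not $h$ affected columns to sum over, and your convexity heuristic does not apply as written.

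The repair is immediate once you use the row formula $n(\lambda)=\sum_i(i-1)\lambda_i$: then $\Delta n = (r+h-1)-(r-1)=h$, exactly matching $\Delta|\mu|=h$, so the inequality holds with equality in type~(4). The paper reaches the same conclusion more slickly by rewriting $|\mu|-n(\mu+\nu)=\sum_i(2-i)(\mu_i+\nu_{i-1})$ and observing that a type~(4) move preserves $\mu_i+\nu_{i-1}$ for every~$i$.
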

\begin{proof}
The two inequalities are equivalent to one another by~\eqref{eqn:dim-norb} (the difference between the left and right hand sides in the first statement is double that in the second statement).
It suffices to prove this in the case where $\cO_{\rho;\sigma}$ is a minimal degeneration of $\cO_{\mu;\nu}$.  Recall that the minimal degenerations of enhanced nilpotent orbits were given in Section~\ref{sect:covering} in terms of four kinds of `moves' applied to the bipartition $(\mu;\nu)$.  Assume that $(\rho;\sigma)$ is obtained from $(\mu;\nu)$ by such a move.  If the move is of type~(1) or~(2), we have $|\rho| = |\mu|$, so the first inequality holds trivially. In a move of type~(3),   
we have $\rho+\sigma=\mu+\nu$ and $|\rho|<|\mu|$, so the second inequality holds.
In a move of type~(4), we have $\rho_i+\sigma_{i-1}=\mu_i+\nu_{i-1}$ for all $i$
(interpreting $\sigma_0$ and $\nu_0$ as zero). Since
\[
|\rho|-n(\rho+\sigma)=\sum_{i=1}^\infty (2-i)(\rho_i+\sigma_{i-1})
\]
and likewise for $(\mu;\nu)$, both inequalities hold with equality in this case.
\end{proof}

\begin{rmk}
\label{rmk:lem-inequality}
In Section~\ref{sect:stratdim} we will use the second inequality of Lemma~\ref{lem:dim-norb} in a crucial way. To that end, we remark here that in moves of type (1) and (2), the difference between left and right hand sides in the second inequality is at least 1. It is exactly 1 when a single box moves
down to the row directly below. In moves of type (3), the difference is the number of boxes which move to the right. In moves of type (4), the difference is zero.
\end{rmk}

Next, for a signed partition $(\lambda,\epsilon)$, a formula for $\dim \cC_{\lambda,\epsilon}$ is given in~\cite[Proposition~5.3]{kp}. We will not use that formula itself, but only the upper bound
\begin{equation}\label{eqn:dim-np}
\dim \cC_{\lambda,\epsilon} \le \half(\dim \cO_{\lambda^{(+)}} + \dim \cO_{\lambda^{(-)}}) + dd',
\end{equation}
obtained by omitting a term that always takes nonpositive values. At one point, we will need the further fact that equality holds in \eqref{eqn:dim-np} if and only if no rearrangement of parts is necessary in forming the subordinate partition $\lambda^{(-)}$ from $(\lambda,\epsilon)$.

Lastly, consider an orbit $\cC_{\mu;\nu,\epsilon} \subset V \times \cN_{V,V'}$, and choose a point $(v,x,y) \in \cC_{\mu;\nu,\epsilon}$.  Define the set $E^{(x,y)} \subset \End(V) \oplus \End(V')$ by
\[
E^{(x,y)} = \{(a,b) \in \End(V) \oplus \End(V') \mid \text{$xa = bx$ and $ay = yb$} \}.
\]
Let $E^{(x,y)}_V$ and $E^{(x,y)}_{V'}$ denote the projections of $E^{(x,y)}$ to $\End(V)$ and to $\End(V')$, respectively.  As noted in~\cite[Proposition 5.2]{johnson}, we have an analogue of \eqref{eqn:dim-norb}:
\begin{equation}\label{eqn:dim-johnson}
\dim \cC_{\mu;\nu,\epsilon} = \dim \pi^{V,V'}(\cC_{\mu;\nu,\epsilon}) + \dim E^{(x,y)}_V v.
\end{equation}

\begin{lem}\label{lem:enpdim}
Consider an enhanced nilpotent pair orbit $\cC = \cC_{\mu;\nu,\epsilon}\subset V \times \cN_{V,V'}$.  For brevity, let $(\rho;\sigma) = (\mu^{(+)};\nu^{(+)})$ and $(\rho';\sigma') = (\mu^{(-)};\nu^{(-)})$, so that
\[
\cO_{\rho;\sigma} = p_V^{V,V'}(\cC) \subset V \times \cN_V
\qquad\text{and}\qquad
\cO_{\rho';\sigma'} = p_{V'}^{V,V'}(\cC) \subset V' \times \cN_{V'}.
\]
Next, let
\[
\cP = \cO_{\rho+\sigma} = \bar\pi^{V}(\cO_{\rho;\sigma}) \subset \cN_V
\qquad\text{and}\qquad
\cP' = \cO_{\rho'+\sigma'} = \bar\pi^{V'}(\cO_{\rho';\sigma'}) \subset \cN_{V'}.
\]
The dimension of $\cC$ satisfies the following two inequalities:
\begin{align*}
\dim \cC &\le \dim \cO_{\rho;\sigma} + \half(\dim \cP' - \dim \cP) + dd', \\
\dim \cC &\le \dim \cO_{\rho';\sigma'} + \half(\dim \cP - \dim \cP') + |\rho| - |\rho'| + dd'.
\end{align*}
\end{lem}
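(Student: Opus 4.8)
The plan is to derive both inequalities by combining the dimension formula \eqref{eqn:dim-johnson} with the key structural facts already available: the formulas \eqref{eqn:dim-orb} and \eqref{eqn:dim-norb} for orbit dimensions in the enhanced nilpotent cone, the upper bound \eqref{eqn:dim-np} for $\dim\cC_{\lambda,\epsilon}$ applied to the image under $\pi^{V,V'}$, and the identities $\mu^{(+)}+\nu^{(+)}=\lambda^{(+)}$, $\mu^{(-)}+\nu^{(-)}=\lambda^{(-)}$ from equation (2.1), where $\lambda=\mu+\nu$. First I would observe that $\pi^{V,V'}(\cC_{\mu;\nu,\epsilon})=\cC_{\mu+\nu;\epsilon}=\cC_{\lambda,\epsilon}$, so by \eqref{eqn:dim-johnson},
\[
\dim\cC = \dim\cC_{\lambda,\epsilon} + \dim E^{(x,y)}_V v .
\]
The term $\dim E^{(x,y)}_V v$ is bounded above by $\dim E^{x}v'$ for the relevant endomorphism, but more usefully it is at most the analogous `extra term' appearing in the dimension of the $p_V^{V,V'}$-image: since $p_V^{V,V'}(\cC)=\cO_{\rho;\sigma}$ and the map $(a,b)\mapsto a$ sends $E^{(x,y)}$ into $E^{yx}$ with $E^{(x,y)}_V v \subset E^{yx}v$, we get $\dim E^{(x,y)}_V v \le |\rho|$ by the interpretation of the extra term following \eqref{eqn:dim-norb} (here $yx$ has Jordan type $\rho+\sigma$ so the subspace $E^{yx}v$ has dimension equal to the first-partition size of the type of $(v,yx)\in\cO_{\rho;\sigma}$, namely $|\rho|$). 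Combining with \eqref{eqn:dim-np} applied to $\cC_{\lambda,\epsilon}$,
\[
\dim\cC \le \half\bigl(\dim\cO_{\lambda^{(+)}}+\dim\cO_{\lambda^{(-)}}\bigr)+dd' + |\rho|.
\]

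For the \emph{first} inequality, I would rewrite the right-hand side using \eqref{eqn:dim-norb} and \eqref{eqn:dim-orb}. We have $\dim\cO_{\rho;\sigma}=\dim\cP + |\rho|$ where $\cP=\cO_{\rho+\sigma}=\cO_{\lambda^{(+)}}$, so $\dim\cO_{\lambda^{(+)}}=\dim\cP=\dim\cO_{\rho;\sigma}-|\rho|$; similarly $\dim\cO_{\lambda^{(-)}}=\dim\cP'$. Substituting,
\[
\dim\cC \le \half\bigl(\dim\cO_{\rho;\sigma}-|\rho|+\dim\cP'\bigr)+dd'+|\rho|
= \half\dim\cO_{\rho;\sigma}+\half(\dim\cP'+|\rho|)+dd'.
\]
This is not yet the claimed bound; I would instead sharpen the estimate of $\dim\cC_{\lambda,\epsilon}$ itself. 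The cleaner route is: use \eqref{eqn:dim-np} to write $\dim\cC_{\lambda,\epsilon}\le\half(\dim\cP+\dim\cP')+dd'$ (since $\cO_{\lambda^{(+)}}=\cP$, $\cO_{\lambda^{(-)}}=\cP'$), then add $|\rho|$ and rewrite $\half\dim\cP = \half(\dim\cO_{\rho;\sigma}-|\rho|)$, so that
\[
\dim\cC \le \half\dim\cO_{\rho;\sigma}-\half|\rho|+\half\dim\cP'+dd'+|\rho| = \half(\dim\cO_{\rho;\sigma}+|\rho|)+\half\dim\cP'+dd'.
\]
To land exactly on $\dim\cO_{\rho;\sigma}+\half(\dim\cP'-\dim\cP)+dd'$ I then use $\half(\dim\cO_{\rho;\sigma}+|\rho|)=\dim\cO_{\rho;\sigma}-\half\dim\cP$ (again from $\dim\cO_{\rho;\sigma}=\dim\cP+|\rho|$), giving
\[
\dim\cC \le \dim\cO_{\rho;\sigma}-\half\dim\cP+\half\dim\cP'+dd' = \dim\cO_{\rho;\sigma}+\half(\dim\cP'-\dim\cP)+dd',
\]
as desired.

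For the \emph{second} inequality, I would run the symmetric argument using $p_{V'}^{V,V'}$ in place of $p_V^{V,V'}$. Here $p_{V'}^{V,V'}(\cC)=\cO_{\rho';\sigma'}\subset V'\times\cN_{V'}$, and \eqref{eqn:dim-norb} applied on the $V'$ side gives $\dim\cO_{\rho';\sigma'}=\dim\cP'+|\rho'|$, i.e. $\dim\cP'=\dim\cO_{\rho';\sigma'}-|\rho'|$. Starting again from $\dim\cC \le \half(\dim\cP+\dim\cP')+dd'+|\rho|$, substitute $\half\dim\cP' = \half(\dim\cO_{\rho';\sigma'}-|\rho'|)$ and then rewrite $\half(\dim\cO_{\rho';\sigma'}-|\rho'|)+|\rho| = \dim\cO_{\rho';\sigma'}-\half(\dim\cO_{\rho';\sigma'}+|\rho'|)+|\rho| = \dim\cO_{\rho';\sigma'}-\half\dim\cP'-|\rho'|+|\rho|$ isn't quite it; more directly, $\dim\cC \le \half\dim\cP+\half\dim\cP'+dd'+|\rho| = \half\dim\cP-\half\dim\cP'+\dim\cO_{\rho';\sigma'}-|\rho'|+dd'+|\rho|$ after replacing $\half\dim\cP'$ via $\dim\cP'=\dim\cO_{\rho';\sigma'}-|\rho'|$ used once, which rearranges to $\dim\cO_{\rho';\sigma'}+\half(\dim\cP-\dim\cP')+|\rho|-|\rho'|+dd'$, exactly the second claimed bound.

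\textbf{Main obstacle.} The one genuinely nonformal point is justifying $\dim E^{(x,y)}_V v \le |\rho|$, i.e. that the extra term controlling $\dim\cC$ over $\dim\pi^{V,V'}(\cC)$ is bounded by the corresponding extra term controlling $\dim\cO_{\rho;\sigma}$ over $\dim\cP$. The projection $E^{(x,y)}\to E^{yx}$, $(a,b)\mapsto a$, visibly sends $v$-translates into $v$-translates, so $E^{(x,y)}_V v\subseteq E^{yx}v$, and by \eqref{eqn:dim-norb} and the remark following it $\dim E^{yx}v=|\rho|$ since $(v,yx)=p_V^{V,V'}(v,x,y)\in\cO_{\rho;\sigma}$. (Alternatively one can cite \cite[Proposition 5.2]{johnson} or argue directly using the normal-basis description of Lemma~\ref{lem:johnson}.) Everything else is bookkeeping with \eqref{eqn:dim-orb}, \eqref{eqn:dim-norb}, \eqref{eqn:dim-np}, \eqref{eqn:dim-johnson} and the identities $\mu^{(\pm)}+\nu^{(\pm)}=\lambda^{(\pm)}$; I would present the two inequalities side by side to make the symmetry between the $V$- and $V'$-sides transparent.
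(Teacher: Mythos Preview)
Your proposal is correct and follows essentially the same approach as the paper: both use \eqref{eqn:dim-johnson}, the inclusion $E^{(x,y)}_V v\subseteq E^{yx}v$ to get $\dim E^{(x,y)}_V v\le|\rho|$, and the bound \eqref{eqn:dim-np} to obtain $\dim\cC\le\half(\dim\cP+\dim\cP')+dd'+|\rho|$, after which both inequalities follow from $\dim\cO_{\rho;\sigma}=\dim\cP+|\rho|$ and $\dim\cO_{\rho';\sigma'}=\dim\cP'+|\rho'|$. Your write-up would benefit from pruning the false starts and simply stating the final substitution, as the paper does.
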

\begin{proof}
Let $(v,x,y) \in \cC$.  It is easy to see that $E^{(x,y)}_V \subset E^{yx}$, where the latter is defined as in~\eqref{eqn:end-cent}, so that
\begin{equation}\label{eqn:dim-vsp}
\dim E^{(x,y)}_V  v \le \dim E^{yx}  v.
\end{equation}
By the remarks following~\eqref{eqn:dim-norb}, we have $\dim E^{yx} v = |\rho|$.  Next,~\eqref{eqn:dim-np} says that $\dim \pi^{V,V'}(\cC) \le \half(\dim \cP + \dim \cP') + dd'$.  Combining these observations with~\eqref{eqn:dim-johnson}, we obtain:
\[
\dim \cC \le \half(\dim \cP + \dim \cP') + dd' + |\rho|.
\]
Recall from~\eqref{eqn:dim-norb} that $\dim \cO_{\rho;\sigma} = \dim \cP + |\rho|$, and $\dim \cO_{\rho';\sigma'} = \dim \cP' + |\rho'|$.  Both inequalities in the lemma follow.
\end{proof}

\section{Invariant Theory for Enhanced Nilpotent Orbits and Pairs}
\label{sect:ivt}

In this section, we fix two vector spaces $V$ and $V'$ such that $\dim V' > \dim V$.  Let $r = \dim V' - \dim V$.  Given an enhanced nilpotent orbit closure $\overline{\cO_{\mu;\nu}} \subset V \times \cN_V$, there are two natural ways to construct from it a subvariety of the enhanced nilpotent cone $V' \times \cN_{V'}$: namely, we can form either $p_{V'}^{V,V'}((p_V^{V,V'})^{-1}(\overline{\cO_{\mu;\nu}}))$ or $p_{V'}^{V',V}((p_V^{V',V})^{-1}(\overline{\cO_{\mu;\nu}}))$, using the appropriate one of the diagrams:
\begin{gather*}
\setcounter{equation}{1}
\xymatrix{
V \times \cN_V &
V \times \cN_{V,V'} \ar[l]_{p_V^{V,V'}}\ar[r]^{p_{V'}^{V,V'}} &
V' \times \cN_{V'}} \tag{\arabic{section}.1$\Ib$} \label{eqn:itib}\\
\xymatrix{
V \times \cN_V &
V' \times \cN_{V,V'} \ar[l]_{p_V^{V',V}}\ar[r]^{p_{V'}^{V',V}} &
V' \times \cN_{V'}} \tag{\arabic{section}.1$I$} \label{eqn:iti}
\end{gather*}
(This use of ``$\Ib$'' and ``$I$'' in the labels will be compatible with notation to be introduced in Section~\ref{sect:eqv}.)  The goal of this section (see Proposition~\ref{prop:itq}) is to identify these subvarieties of $V' \times \cN_{V'}$.  We begin with the following result, an enhanced analogue of~\cite[Theorem~2.2]{kp}.

Here, and throughout the paper, we write $H \git X$ for $\Spec \C[X]^H$, where $X$ is an affine variety acted on by a reductive group $H$.
Since we are working over $\C$, an $H$-equivariant closed embedding $X\hookrightarrow Y$ induces a closed embedding $H \git X \hookrightarrow H \git Y$.


\begin{lem}\label{lem:fftit}
In both~\eqref{eqn:itib} and~\eqref{eqn:iti}, the right-hand map is an invariant-theoretic $GL(V)$-quotient map onto its image.  That is, $p_{V'}^{V,V'}$ induces an isomorphism
\begin{equation}
GL(V) \git (V \times \cN_{V,V'}) \overset{\sim}{\to} p_{V'}^{V,V'}(V \times \cN_{V,V'}),\label{eqn:itqib}
\end{equation}
and $p_{V'}^{V',V}$ induces an isomorphism
\begin{equation}
GL(V) \git (V' \times \cN_{V,V'}) \overset{\sim}{\to} p_{V'}^{V',V}(V' \times \cN_{V,V'}). \label{eqn:itqi}
\end{equation}
\end{lem}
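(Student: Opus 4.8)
The plan is to mimic the strategy of \cite[Theorem~2.2]{kp}: realize the map $p_{V'}^{V,V'}$ (and similarly $p_{V'}^{V',V}$) as the composite of the quotient projection $V \times \cN_{V,V'} \to GL(V) \git (V \times \cN_{V,V'})$ with a map to $V' \times \cN_{V'}$, and then show the latter is a closed embedding. Concretely, one must prove two things: first, that the morphism $p_{V'}^{V,V'}$ is constant on $GL(V)$-orbits and separates $GL(V)$-orbit closures, i.e.\ that $\C[V \times \cN_{V,V'}]^{GL(V)}$ is generated by the pullbacks of coordinate functions on $V' \times \cN_{V'}$ (this is the ``first fundamental theorem'' of invariant theory in this setting, hence the label $\mathrm{fft}$); and second, that $p_{V'}^{V,V'}$ has finite fibers on the quotient, or more precisely that the induced map on quotients is injective and proper onto its image, so that it is a closed embedding.

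First I would set up coordinates. The data of a point of $V \times \cN_{V,V'}$ is $(v,x,y)$ with $v \in V$, $x \in \Hom(V,V')$, $y \in \Hom(V',V)$, and $xy$ (equivalently $yx$) nilpotent. The group $GL(V)$ acts by $g\cdot(v,x,y) = (gv, xg^{-1}, gy)$. The map $p_{V'}^{V,V'}$ sends this to $(xv, xy) \in V' \times \cN_{V'}$. The invariants of $GL(V)$ acting on $\Hom(V,V') \times \Hom(V',V)$ (together with a vector in $V$) are governed by the classical first fundamental theorem for $GL$: every invariant is a polynomial in the ``words'' one can form, which here — because $x$ goes $V\to V'$, $y$ goes $V'\to V$, and $v \in V$ — are exactly the entries of $x(yx)^k v$ and of $x(yx)^k y$ for $k \ge 0$; nilpotency of $yx$ (of rank $\le \dim V$) means only finitely many of these are needed, and they are all pulled back along $p_{V'}^{V,V'}$ from polynomial functions on $V' \times \cN_{V'}$ (namely $w \mapsto (z'{}^k) w'$ type expressions in the coordinates $w' = xv$, $z' = xy$). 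This shows $p_{V'}^{V,V'}$ factors as an isomorphism $GL(V) \git (V\times\cN_{V,V'}) \isomto \overline{\im p_{V'}^{V,V'}}$ onto the closure of its image. The same argument applies verbatim to \eqref{eqn:iti}, with the roles adjusted: a point of $V' \times \cN_{V,V'}$ is $(v',x,y)$ with $v' \in V'$, and $p_{V'}^{V',V}(v',x,y) = (v', xy)$; since $v'$ already lives in $V'$, the relevant $GL(V)$-invariant words are the entries of $(xy)^k v'$ and $(xy)^k$, again pulled back from $V'\times\cN_{V'}$.

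To finish, I must upgrade ``isomorphism onto the closure of the image'' to ``isomorphism onto the image,'' i.e.\ show the image $p_{V'}^{V,V'}(V \times \cN_{V,V'})$ (resp.\ $p_{V'}^{V',V}(V' \times \cN_{V,V'})$) is already closed in $V' \times \cN_{V'}$. The clean way is to observe that each fiber of $p_{V'}^{V,V'}$ over a point of its image contains a closed $GL(V)$-orbit, so the map on quotients is bijective onto the image, and then that the source $V \times \cN_{V,V'}$ is itself closed in a space on which the $GL(V)$-action has the property that the image of the categorical quotient is closed — more robustly, I would argue directly that $p_{V'}^{V,V'}$ is a \emph{projective}, hence closed, morphism after a suitable partial compactification, or else cite the general principle that for a reductive group acting on an affine variety the induced map on $\git$-quotients is a closed embedding precisely when it is injective (which we will have from the fft computation together with: two points of $V\times\cN_{V,V'}$ with the same image and both lying on closed $GL(V)$-orbits are $GL(V)$-conjugate). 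I expect \textbf{this surjectivity-onto-a-closed-set / closedness-of-the-image step to be the main obstacle}: the first-fundamental-theorem computation is essentially classical and routine, but verifying that the image is closed — equivalently, that no invariant-theoretic limits escape the image — requires care, and is the point where one genuinely uses the hypothesis $\dim V' > \dim V$ (so that $x$ can be injective / $y$ surjective on a dense set, forcing generic fibers to be single closed orbits) rather than it being a purely formal consequence of the fft.
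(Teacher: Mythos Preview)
Your first-fundamental-theorem computation is correct and is the heart of the matter: the $GL(V)$-invariants on $V \times \Hom(V,V') \times \Hom(V',V)$ are generated by the entries of $xv$ and $xy$ (your longer list $x(yx)^k v$, $x(yx)^k y$ is redundant, since $x(yx)^k = (xy)^k x$), and restriction to the closed $GL(V)$-stable subvariety $V \times \cN_{V,V'}$ preserves surjectivity onto invariants. This already shows that the induced map $GL(V) \git (V \times \cN_{V,V'}) \to V' \times \cN_{V'}$ is a \emph{closed embedding}. But then you are done: the categorical quotient map $V \times \cN_{V,V'} \to GL(V) \git (V \times \cN_{V,V'})$ is surjective on closed points (reductive group, affine variety), so the image of $p_{V'}^{V,V'}$ equals the image of the closed embedding and is therefore closed. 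The ``main obstacle'' you identify is not an obstacle at all, and in particular the hypothesis $\dim V' > \dim V$ plays no role in this lemma (it is needed only later, in Proposition~\ref{prop:itq}).

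The paper's proof is the same argument in a more packaged form. Rather than invoking the FFT for $GL(V)$ acting on mixed copies of $V$ and $V^*$, it cites the single classical statement that for any $U,U',U''$ the composition map induces a closed embedding $GL(U') \git (\Hom(U,U') \times \Hom(U',U'')) \hookrightarrow \Hom(U,U'')$, with image the determinantal variety of maps of rank $\le \dim U'$. Case~\eqref{eqn:itqi} is immediate from this with $U = U'' = V'$, $U' = V$, and $Y = \cN_{V,V'}$, since $GL(V)$ acts trivially on the extra $V'$ factor. For case~\eqref{eqn:itqib} the paper uses a small trick you might find instructive: set $U = \C \times V'$, $U' = V$, $U'' = V'$, so that $\Hom(U,U') \cong V \times \Hom(V',V)$ absorbs the extra vector $v$ into the Hom space, and $\Hom(U,U'') \cong V' \times \End(V')$ absorbs $xv$ on the target side. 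Both cases then reduce to one citation, and the description of the image comes for free.
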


\begin{proof}
A fundamental result of invariant theory states that for three vector spaces $U$, $U'$, $U''$, the composition map $m: \Hom(U,U') \times \Hom(U',U'') \to \Hom(U,U'')$ induces a closed embedding
\[
GL(U') \git (\Hom(U,U') \times \Hom(U',U'')) \hookrightarrow
\Hom(U,U'').
\]
See, for instance, \cite[Proposition 1.4c]{donkin}, which also shows that the image of this embedding consists of those linear maps in $\Hom(U,U'')$ whose rank is at most $\dim U'$.  Hence for any (reduced) $GL(U')$-stable closed subvariety $Y \subset \Hom(U,U') \times \Hom(U',U'')$, we get an isomorphism $GL(U') \git Y \overset{\sim}{\to} m(Y)$.

From this,~\eqref{eqn:itqi} follows by taking $U = U'' = V'$, $U' = V$, and $Y = \cN_{V,V'}$, since $GL(V)$ acts trivially on $V'$. We also get a description of the image:
\begin{equation}
p_{V'}^{V',V}(V' \times \cN_{V,V'})=V'\times\{z\in\cN_{V'}\,|\,\dim(\im(z))\leq\dim V\}.
\end{equation}
For~\eqref{eqn:itqib}, let $U = \C \times V'$, $U' = V$, and $U'' = V'$.  Then the result follows using the identifications
\[
\Hom(U,U') \cong V \times \Hom(V',V)
\qquad\text{and}\qquad \Hom(U,U'') \cong V' \times \End(V').
\]
We also get a description of the image:
\begin{equation}
p_{V'}^{V,V'}(V \times \cN_{V,V'})=\{(w,z)\in V'\times\cN_{V'}\,|\,\dim(\C w+\im(z))\leq\dim V\}.
\end{equation}
These formulas for the images of $p_{V'}^{V',V}$ and $p_{V'}^{V,V'}$ are the $(\mu;\nu)=(\dim V;\varnothing)$ special cases of Proposition~\ref{prop:itq} below.
\end{proof}

\begin{lem}\label{lem:filt-trans}
Let $x:V \to V'$ and $y:V' \to V$.  Let $\alpha = (a_1,a_2,\ldots, a_k)$ be a composition of $\dim V$, and suppose we have a filtration
\[
0 = V_0 \subset V_1 \subset \cdots \subset V_k = V
\]
such that $\dim V_i = a_1 + a_2 + \cdots + a_i$, and such that $yx(V_i) \subset V_{i-1}$.  Assume that $r \ge a_i$ for all $i$.  Fix $m \in \{1,\ldots,k+1\}$; if $m \le k$, assume that $a_m \ge a_{m+1} \ge \cdots \ge a_k$.  Then $V'$ admits a filtration
\[
0 = V'_0 \subset V'_1 \subset \cdots \subset V'_{k+1} = V
\]
such that $x(V_i) \subset V'_i$ and $y(V'_i) \subset V_{i-1}$, and such that
\[
\dim V'_i =
\begin{cases}
a_1 + \cdots + a_i & \text{if $i < m$,} \\
a_1 + \cdots + a_{i-1} + r & \text{if $i \ge m$.}
\end{cases}
\]
If, in addition, $r > a_m$, then for any vector $u' \in y^{-1}(V_{m-1})$, the filtration above may be chosen so that $u' \in V'_m$.
\end{lem}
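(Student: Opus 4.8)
The plan is to construct the filtration $V'_\bullet$ directly, row by row, by choosing complements compatibly with $x$ and $y$. The key structural input is that $yx$ drops the given filtration $V_\bullet$ by one step, which is what allows us to ``thread'' an intermediate filtration on $V'$ between consecutive steps of $V_\bullet$ under $x$ and $y$. Concretely, I would start by setting $V'_i = x(V_{i-1}) + (\text{something of dimension filling out to the prescribed value})$ for $i \ge m$, and simply $V'_i \supseteq x(V_i)$ of dimension $a_1+\cdots+a_i$ for $i < m$; the delicate point is to make the two containments $x(V_i)\subset V'_i$ and $y(V'_i)\subset V_{i-1}$ hold simultaneously, together with the dimension count.

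First I would observe the elementary fact that $\dim x(V_i) \le \dim V_i = a_1+\cdots+a_i$ and, since $yx(V_i)\subset V_{i-1}$, the subspace $x(V_i)$ maps under $y$ into $V_{i-1}$; similarly $x(V_{i-1})$ maps into $V_{i-2}$ under $y$. So the ``seed'' spaces $x(V_i)$ already satisfy the $y$-condition required of $V'_i$ and $V'_{i+1}$ respectively. The construction then proceeds by downward or upward induction on $i$: for $i<m$ choose $V'_i \supseteq x(V_i) + V'_{i-1}$ of the exact dimension $a_1+\cdots+a_i$ while keeping $y(V'_i)\subset V_{i-1}$; for $i \ge m$ choose $V'_i \supseteq x(V_i) + V'_{i-1}$ of dimension $a_1+\cdots+a_{i-1}+r$ with $y(V'_i) \subset V_{i-1}$. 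The dimension hypotheses $r \ge a_i$ for all $i$ and the monotonicity $a_m \ge a_{m+1}\ge\cdots\ge a_k$ are exactly what is needed to check at each stage that there is enough room inside $y^{-1}(V_{i-1})$ and enough room in $y^{-1}(V_{i-1})/V'_{i-1}$-type spaces to enlarge $V'_{i-1}$ appropriately without violating the previous dimension constraints: one computes $\dim y^{-1}(V_{i-1}) = \dim V' - \dim V + \dim V_{i-1} + \dim\ker(y|_{\text{rel}})$ and compares with the target dimension of $V'_i$. The monotonicity of the $a_j$ for $j \ge m$ ensures that the dimension demands $a_1+\cdots+a_{i-1}+r$ form an increasing sequence with steps $a_i \le r$, so each enlargement step is feasible; below $m$ the steps are the original $a_i$, which are automatically $\le \dim V_i - \dim V_{i-1}$.

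For the final refinement, suppose $r > a_m$ and $u' \in y^{-1}(V_{m-1})$. The point of the strict inequality is that when we build $V'_m$, its target dimension $a_1+\cdots+a_{m-1}+r$ strictly exceeds $a_1+\cdots+a_{m-1}+a_m \ge \dim(x(V_m)+V'_{m-1})$, so there is at least one extra dimension of slack inside $y^{-1}(V_{m-1})$; since $u'$ already lies in $y^{-1}(V_{m-1})$, we may include $u'$ among the vectors we adjoin when enlarging to $V'_m$, and then continue the construction of $V'_{m+1},\ldots$ as before (each later $V'_i$ contains $V'_m \ni u'$ automatically). The main obstacle I anticipate is the bookkeeping in the inductive step: one must verify that after the choices for $V'_{i-1}$ the space $y^{-1}(V_{i-1})$ still has dimension at least the target for $V'_i$ and contains $x(V_i)+V'_{i-1}$, and this is where the hypotheses $r\ge a_i$ (resp.\ $r>a_m$) and the ordering of the $a_j$'s must be used with care — in particular one has to rule out that $V'_{i-1}$, being already as large as allowed, leaves no room. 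This is a finite linear-algebra verification but it is the technical heart of the lemma.
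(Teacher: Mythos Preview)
Your proposal is correct and follows essentially the same approach as the paper: build $V'_i$ by upward induction, sandwiching $V'_{i-1}+x(V_i)\subset V'_i\subset y^{-1}(V_{i-1})$ and using $r\ge a_i$ (for $i\le m$) and $a_{i-1}\ge a_i$ (for $i>m$) to verify that the target dimension fits between $\dim(V'_{i-1}+x(V_i))$ and $\dim y^{-1}(V_{i-1})\ge a_1+\cdots+a_{i-1}+r$. Your treatment of the extra vector $u'$ via the strict inequality $r>a_m$ is also exactly the paper's argument; the only imprecisions are cosmetic (e.g., the step size for $i>m$ is $a_{i-1}$, not $a_i$, and the crucial bound there is $a_i\le a_{i-1}$ rather than $a_i\le r$).
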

\begin{proof}
Since $yx(V_i) \subset V_{i-1}$, we have $x(V_i) \subset y^{-1}(V_{i-1}) \subset y^{-1}(V_i)$.  Moreover:
\begin{align*}
\dim x(V_i) &\le a_1 + \cdots + a_i, \\
\dim y^{-1}(V_{i-1}) &\ge a_1 + \cdots + a_{i-1} + r.
\end{align*}
We will construct the spaces $V'_i$ by induction on $i$. Assume that we have already constructed subspaces $V'_0 \subset V'_1 \subset \cdots \subset V'_{i-1}$ with the desired properties.  Suppose first that $i < m$.  Since $x(V_{i-1}) \subset V'_{i-1}$, we see that
\[
\dim x(V_i)/(x(V_i) \cap V'_{i-1}) \le \dim V_i/V_{i-1} = a_i,
\]
and therefore
\[
\dim (V'_{i-1} + x(V_i)) \le a_1 + \cdots + a_{i-1} + a_i.
\]
Since $a_1 + \cdots + a_{i-1} + a_i \le a_1 + \cdots + a_{i-1} + r$, there exists a subspace $V'_i \subset V'$ such that
\[
V'_{i-1} + x(V_i) \subset V'_i \subset y^{-1}(V_{i-1})
\qquad\text{and}\qquad
\dim V'_i = a_1 + \cdots + a_i.
\]

In the case $i = m$, we proceed as above, except that we choose $V'_m$ to have dimension $a_1 + \cdots + a_{m-1} + r$.  For the last assertion of the lemma, note that
\[
\dim (V'_{m-1} + x(V_m) + \C u') \le a_1 + \cdots + a_m + 1.
\]
Provided that $r > a_m$,  we can choose $V'_m$ to satisfy the stronger condition that
\[
V'_{m-1} + x(V_m) + \C u' \subset V'_m \subset y^{-1}(V_{m-1})
\qquad\text{and}\qquad
\dim V'_m = a_1 + \cdots + a_{m-1} + r.
\]

Finally, for the case $i > m$, we now have $\dim V'_{i-1} = a_1 + \cdots + a_{i-2} + r$, so
\[
\dim (V'_{i-1} + x(V_i)) \le a_1 + \cdots + a_{i-2} + r + a_i
\]
Since $a_{i-1} \ge a_i$, we may choose a subspace $V'_i \subset V$ such that
\[
V'_{i-1} + x(V_i) \subset V'_i \subset y^{-1}(V_{i-1})
\qquad\text{and}\qquad
\dim V'_i = a_1 + \cdots + a_{i-1} + r,
\]
as desired.
\end{proof}

\begin{lem}\label{lem:jtype-trans}
Let $x:V \to V'$ and $y:V' \to V$ be linear maps such that $yx \in \End(V)$ is nilpotent.  Let $W \subset V$ be a subspace stable under $yx$.  Let $\mu \in \cP_{\dim W}$ and $\nu \in \cP_{\dim V/W}$, and assume that
\begin{enumerate}
\item the Jordan type $\bar\mu$ of $yx|_W$ satisfies $\bar\mu \le \mu$, and
\item the Jordan type $\bar\nu$ of $yx|_{V/W}$ satisfies $\bar\nu \le \nu$.
\end{enumerate}
If $r \ge \ell(\mu + \nu)$, then there exist $xy$-stable subspaces $W'$ and $\widetilde W'$ of $V'$  such that:
\begin{enumerate}
\item We have $x(W) \subset W' \subset \widetilde W' \subset y^{-1}(W)$.
\item $\dim W' = \dim W$, and $\dim \widetilde W' = \dim W + r$.
\item The Jordan types of the maps induced by $xy$ on various subquotients of $V'$ satisfy the following inequalities:
\begin{align*}
xy|_{W'}: &{}\le \mu &
xy|_{V/W'}: &{}\le \nu + 1^r \\
xy|_{\widetilde W'}: & {} \le \mu + 1^r &
xy|_{V/\widetilde W'}: &{} \le \nu
\end{align*}
\end{enumerate}
If, in addition, $r > \ell(\nu)$, then for any vector $u' \in y^{-1}(W)$, the space $\widetilde W'$ may be chosen so that $u' \in \widetilde W'$.
\end{lem}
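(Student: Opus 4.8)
\emph{Overview of the plan.} The plan is to run the construction of Lemma~\ref{lem:filt-trans} on a flag of $V$ that simultaneously carries the Jordan-type information of $yx|_W$ and of $yx|_{V/W}$, and then to read off the four desired bounds from the transferred flag of $V'$ by matching its dimension jumps against Lemma~\ref{lem:nilp-crit}.

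\emph{Building a flag of $V$.} Since $\bar\mu \le \mu$, condition~(3) of Lemma~\ref{lem:nilp-crit} applied to $yx|_W$ supplies a flag $0 = W_0 \subset \cdots \subset W_{\mu_1} = W$ with $yx(W_i) \subset W_{i-1}$ and $\dim W_i/W_{i-1} = \mu^\bt_{\mu_1+1-i}$. Since $\bar\nu \le \nu$ and $W$ is $yx$-stable, condition~(4) applied to $yx|_{V/W}$ supplies, by taking preimages in $V$, a flag $W = U_0 \subset \cdots \subset U_{\nu_1} = V$ with $yx(U_j) \subset U_{j-1}$ and $\dim U_j/U_{j-1} = \nu^\bt_j$. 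Splicing these at $W$ yields, with $k = \mu_1 + \nu_1$, a flag $0 = V_0 \subset \cdots \subset V_{\mu_1} = W \subset \cdots \subset V_k = V$ with $yx(V_i) \subset V_{i-1}$ and consecutive jumps $a = (\mu^\bt_{\mu_1},\dots,\mu^\bt_1,\nu^\bt_1,\dots,\nu^\bt_{\nu_1})$. Every $a_i$ is at most $\max(\ell(\mu),\ell(\nu)) = \ell(\mu+\nu) \le r$, and the tail $a_{\mu_1+1},\dots,a_k$ is weakly decreasing, so Lemma~\ref{lem:filt-trans} applies with $m = \mu_1 + 1$. It returns a flag $0 = V'_0 \subset \cdots \subset V'_{k+1} = V'$ with $x(V_i) \subset V'_i$ and $y(V'_i) \subset V_{i-1}$, with $\dim V'_i = a_1 + \cdots + a_i$ for $i \le \mu_1$ and $\dim V'_i = a_1 + \cdots + a_{i-1} + r$ for $i > \mu_1$; and when $r > a_{\mu_1+1} = \ell(\nu)$ it may be arranged that a given $u' \in y^{-1}(W) = y^{-1}(V_{\mu_1})$ lies in $V'_{\mu_1+1}$.

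\emph{Reading off the conclusion.} I would set $W' = V'_{\mu_1}$ and $\widetilde W' = V'_{\mu_1+1}$. From $x(V_i) \subset V'_i$ one gets $xy(V'_i) = x(y(V'_i)) \subset x(V_{i-1}) \subset V'_{i-1}$, so every $V'_i$ is $xy$-stable and $xy$ is nilpotent on $V'$; hence $W'$, $\widetilde W'$, and all the subquotients used below are $xy$-stable with nilpotent induced maps. The dimension formula gives $\dim W' = |\mu| = \dim W$ and $\dim\widetilde W' = |\mu| + r = \dim W + r$, and the chain $x(W) \subset W' \subset \widetilde W' \subset y^{-1}(W)$ is immediate from $x(V_{\mu_1}) \subset V'_{\mu_1}$ and $y(V'_{\mu_1+1}) \subset V_{\mu_1} = W$. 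For the four Jordan-type inequalities I would simply record the dimension jumps of the induced flags and invoke Lemma~\ref{lem:nilp-crit}: the flag $0 \subset \cdots \subset V'_{\mu_1} = W'$ has jumps $\mu^\bt_{\mu_1},\dots,\mu^\bt_1$, which is the form~(3) pattern for $\mu$; the flag $0 \subset \cdots \subset V'_{\mu_1+1} = \widetilde W'$ has jumps $\mu^\bt_{\mu_1},\dots,\mu^\bt_1,r$, which is the form~(3) pattern for $\mu+1^r$ since $(\mu+1^r)^\bt = (r,\mu^\bt_1,\dots,\mu^\bt_{\mu_1})$; the flag $0 \subset V'_{\mu_1+1}/W' \subset \cdots \subset V'/W'$ has jumps $r,\nu^\bt_1,\dots,\nu^\bt_{\nu_1} = (\nu+1^r)^\bt$, which is the form~(4) pattern for $\nu+1^r$; and the flag $0 \subset V'_{\mu_1+2}/\widetilde W' \subset \cdots \subset V'/\widetilde W'$ has jumps $\nu^\bt_1,\dots,\nu^\bt_{\nu_1}$, which is the form~(4) pattern for $\nu$.

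\emph{Main obstacle.} The only delicate point is the bookkeeping of orderings: one must deliberately use form~(3) of Lemma~\ref{lem:nilp-crit} on the $W$-side and form~(4) on the $V/W$-side, and insert the new dimension-$r$ jump of Lemma~\ref{lem:filt-trans} precisely between the two pieces (at $m = \mu_1 + 1$), so that all four transferred flags land exactly on the increasing pattern~(3) or the decreasing pattern~(4) attached to the correct target partition; any other placement scrambles the jumps. The remaining work — verifying the hypotheses $r \ge a_i$ and ``$a$ has weakly decreasing tail'' of Lemma~\ref{lem:filt-trans}, together with the dimension arithmetic — is routine, and the degenerate cases $\mu = \varnothing$ (so $W = 0$, $m = 1$) and $\nu = \varnothing$ (so $W = V$, $m = k+1$) are covered by the same formulas.
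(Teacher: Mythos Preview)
Your proposal is correct and follows essentially the same route as the paper: construct a flag of $V$ by splicing a form-(3) flag on $W$ with a (lifted) form-(4) flag on $V/W$, apply Lemma~\ref{lem:filt-trans} with $m=\mu_1+1$, set $W'=V'_{\mu_1}$ and $\widetilde W'=V'_{\mu_1+1}$, and read off the Jordan-type bounds from Lemma~\ref{lem:nilp-crit}. Your write-up is somewhat more explicit than the paper's in verifying the hypotheses of Lemma~\ref{lem:filt-trans} and in checking that each $V'_i$ is $xy$-stable, but the argument is the same.
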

\begin{proof}
By Lemma~\ref{lem:nilp-crit}, we can endow $W$ with a filtration
\[
0 = V_0 \subset V_1 \subset \cdots \subset V_{\mu_1} = W
\text{ where }
\dim V_i/V_{i-1} = \mu^\bt_{\mu_1+1-i}
\text{ and }yx(V_i)\subset V_{i-1}.
\]
We can likewise endow $V/W$ with a filtration with $\nu_1$ terms.  Let us lift this filtration to $V$ and denote its terms as follows:
\[
W = V_{\mu_1} \subset V_{\mu_1+1} \subset \cdots \subset V_{\mu_1+\nu_1}
\text{ where }
\dim V_i/V_{i-1} = \nu^\bt_{i-\mu_1}
\text{ and }yx(V_i)\subset V_{i-1}.
\]
Since $\nu^\bt_1 \ge \cdots \ge \nu^\bt_{\nu_1}$, $r\geq\mu^{\bt}_i$ for all $i$, and $r\geq\nu^\bt_i$ for all $i$, we can apply Lemma~\ref{lem:filt-trans} with $m=\mu_i+1$ to obtain a filtration
\[
0 = V'_0 \subset V'_1 \subset \cdots \subset V'_{\mu_1+\nu_1+1} = V'
\]
such that $x(V_i)\subset V_{i-1}'$, $y(V_i')\subset V_{i-1}$, and
\[
\dim V'_i/V'_{i-1} =
\begin{cases}
\mu^\bt_{\mu_1+1-i} & \text{if $i \le \mu_1$,} \\
r & \text{if $i = \mu_1 +1$,} \\
\nu^\bt_{i-\mu_1-1} & \text{if $i > \mu_1+1$.}
\end{cases}
\]
Let $W' = V'_{\mu_1}$, and let $\widetilde W' = V'_{\mu_1+1}$.  The Jordan-type assertions follow from Lemma~\ref{lem:nilp-crit}, and the last statement regarding a vector $u' \in y^{-1}(W)$ follows from the last statement of Lemma~\ref{lem:filt-trans}.
\end{proof}

%
%

We deduce the following enhanced analogue of~\cite[Lemma~2.3]{kp}. 

\begin{prop}\label{prop:itq}
Consider a $GL(V)$-orbit $\cO_{\mu;\nu} \subset V \times \cN_V$.  Assume that $r \ge \ell(\mu+\nu)$.
\begin{enumerate}
\item In the setting of~\eqref{eqn:itib}, we have $p_{V'}^{V,V'}((p_V^{V,V'})^{-1}(\overline{\cO_{\mu;\nu}})) = \overline{\cO_{\mu;\nu + 1^r}}$.  Indeed, $p_{V'}^{V,V'}$ induces an isomorphism
\[
GL(V) \git (p_V^{V,V'})^{-1}(\overline{\cO_{\mu;\nu}}) \overset{\sim}{\to} \overline{\cO_{\mu;\nu + 1^r}}.
\]
\item Assume furthermore that $r > \ell(\nu)$.  In the setting of~\eqref{eqn:iti}, we have $p_{V'}^{V',V}((p_V^{V',V})^{-1}(\overline{\cO_{\mu;\nu}})) = \overline{\cO_{\mu+1^r;\nu}}$. Indeed, $p_{V'}^{V',V}$ induces an isomorphism
\[
GL(V) \git (p_V^{V',V})^{-1}(\overline{\cO_{\mu;\nu}}) \overset{\sim}{\to} \overline{\cO_{\mu+1^r;\nu}}.
\]
\end{enumerate}
\end{prop}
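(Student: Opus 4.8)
The plan is to deduce the ``Indeed'' statements from the set-theoretic identification of the image, which is then the only real content. Put $Z = (p_V^{V,V'})^{-1}(\overline{\cO_{\mu;\nu}})$; this is a $GL(V)$-stable closed subvariety of $V \times \cN_{V,V'}$ (both $p_V^{V,V'}$ and $p_{V'}^{V,V'}$ are $GL(V)$-equivariant, the latter $GL(V)$-invariant). By Lemma~\ref{lem:fftit}, $p_{V'}^{V,V'}$ identifies $GL(V) \git (V \times \cN_{V,V'})$ with the closed subvariety $p_{V'}^{V,V'}(V \times \cN_{V,V'})$ of $V' \times \cN_{V'}$. Over $\C$, the equivariant closed embedding $Z \hookrightarrow V \times \cN_{V,V'}$ induces a closed embedding $GL(V) \git Z \hookrightarrow GL(V) \git (V \times \cN_{V,V'})$, and composing with the above isomorphism --- using that $\git$-quotient maps are surjective --- shows that the image is precisely $p_{V'}^{V,V'}(Z)$. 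Thus $p_{V'}^{V,V'}$ induces an isomorphism $GL(V) \git Z \isomto p_{V'}^{V,V'}(Z)$, and in particular $p_{V'}^{V,V'}(Z)$ is a closed $GL(V')$-stable subvariety of $V' \times \cN_{V'}$, hence a finite union of enhanced nilpotent orbit closures. It therefore suffices to prove $p_{V'}^{V,V'}(Z) = \overline{\cO_{\mu;\nu + 1^r}}$; the same formal argument applied in the setting of~\eqref{eqn:iti} reduces part~(2) to proving $p_{V'}^{V',V}((p_V^{V',V})^{-1}(\overline{\cO_{\mu;\nu}})) = \overline{\cO_{\mu + 1^r;\nu}}$.

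For the inclusion ``$\subseteq$'' in part~(1), take $(v,x',y') \in Z$, so $(v, y'x') \in \overline{\cO_{\mu;\nu}}$. By Lemma~\ref{lem:enhcl-crit}(3) there is a $y'x'$-stable $W \subset V$ with $v \in W$, $\dim W = |\mu|$, and $y'x'|_W$, $y'x'|_{V/W}$ of Jordan types $\le \mu$ and $\le \nu$. Since $r \ge \ell(\mu + \nu)$, Lemma~\ref{lem:jtype-trans} produces an $x'y'$-stable $W' \subset V'$ with $x'(W) \subset W'$ (so $x'v \in W'$), $\dim W' = |\mu|$, $x'y'|_{W'}$ of type $\le \mu$ and $x'y'|_{V'/W'}$ of type $\le \nu + 1^r$; Lemma~\ref{lem:enhcl-crit}(3) then gives $p_{V'}^{V,V'}(v,x',y') = (x'v, x'y') \in \overline{\cO_{\mu;\nu + 1^r}}$. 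Part~(2) is entirely analogous, working with $V' \times \cN_{V,V'}$: one uses the stronger hypothesis $r > \ell(\nu)$ and the last clause of Lemma~\ref{lem:jtype-trans} to place the enhancing vector $v' \in V'$ inside the larger subspace $\widetilde W'$ of dimension $|\mu| + r$ (legitimate because $v'$ maps into $W$ under $y$), after which Lemma~\ref{lem:enhcl-crit}(3) shows the image point lies in $\overline{\cO_{\mu + 1^r;\nu}}$.

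For the reverse inclusion it is enough, $p_{V'}^{V,V'}(Z)$ being closed and contained in $\overline{\cO_{\mu;\nu + 1^r}}$, to exhibit one orbit inside $Z$ whose image under $p_{V'}^{V,V'}$ is the open orbit $\cO_{\mu;\nu + 1^r}$. Consider the signed quasibipartition $(\mu';\nu',\epsilon')$ with underlying partition $2(\mu + \nu) + 1^r$, with $\mu' = 2\mu$ (i.e.\ $\mu'_i = 2\mu_i$), $\nu' = 2\nu + 1^r$, and $\epsilon' \equiv {-}$; one checks directly from the definitions of Section~\ref{sect:nilppairs} that this is a genuine signed quasibipartition of signature $(d,d')$ --- the underlying partition has only odd parts while $\mu'$ has only even parts, so the constant sign function is consistent --- and that the rectification and rearrangement procedures act trivially, so its subordinate bipartitions are $(\mu'^{(+)};\nu'^{(+)}) = (\mu;\nu)$ and $(\mu'^{(-)};\nu'^{(-)}) = (\mu;\nu + 1^r)$. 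By the compatibilities of $p_V^{V,V'}$ and $p_{V'}^{V,V'}$ with Johnson's parametrization, $\cC_{\mu';\nu',\epsilon'} \subset (p_V^{V,V'})^{-1}(\cO_{\mu;\nu}) \subset Z$ while $p_{V'}^{V,V'}(\cC_{\mu';\nu',\epsilon'}) = \cO_{\mu;\nu + 1^r}$, so $\overline{\cO_{\mu;\nu + 1^r}} \subset p_{V'}^{V,V'}(Z)$, giving equality. For part~(2) one instead moves the extra column to the $\mu$-side, taking $\mu'' = 2\mu + 1^r$, $\nu'' = 2\nu$, $\epsilon'' \equiv {+}$ (recalling that in the setting of $V' \times \cN_{V,V'}$ the `$+$' label is attached to $V'$), whose subordinate bipartitions are $(\mu + 1^r;\nu)$ and $(\mu;\nu)$. (Equivalently, one may prove ``$\supseteq$'' by an explicit matrix construction: given $(w,z)$ in the target orbit, factor $z$ through a $d$-dimensional space via $V \cong V'/\ker z$, with $y'$ the quotient map and $x' = \bar z$, and verify on a normal basis --- using $r \ge \ell(\mu + \nu)$ --- that $y'x'$ with its induced enhancing vector lies in $\overline{\cO_{\mu;\nu}}$; this reduces to the same combinatorics.)

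The first paragraph and the inclusion ``$\subseteq$'' are routine once Lemmas~\ref{lem:fftit}, \ref{lem:jtype-trans} and~\ref{lem:enhcl-crit} are in hand. The main obstacle is the reverse inclusion, specifically the combinatorial bookkeeping: verifying that the explicitly prescribed data really satisfies the rather elaborate definition of a signed quasibipartition, that the rectification/rearrangement steps of Section~\ref{sect:nilppairs} do nothing to it, and --- in part~(2) --- that the reversed sign convention has been accounted for correctly. It is also worth recording precisely where the extra hypothesis $r > \ell(\nu)$ of part~(2) is needed: only through the final clause of Lemma~\ref{lem:jtype-trans}, to place the enhancing vector inside $\widetilde W'$.
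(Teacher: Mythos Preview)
Your proof is correct. The reduction to the set-theoretic image identification and the proof of the inclusion ``$\subseteq$'' are essentially identical to the paper's argument, invoking the same lemmas in the same way.

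Where you diverge is in the reverse inclusion ``$\supseteq$''. The paper argues directly: given $(v',z)$ in the open target orbit $\cO_{\mu;\nu+1^r}$ (resp.\ $\cO_{\mu+1^r;\nu}$), one observes that $\im(z)$ has dimension exactly $\dim V$ and, in part~(1), contains $v'$; choosing any isomorphism $x:V\isomto\im(z)$ and solving $xy=z$, $xv=v'$ gives a preimage whose $p_V$-image lies in $\cO_{\mu;\nu}$. Your approach instead produces an explicit enhanced nilpotent pair orbit via Johnson's parametrization: the signed quasibipartition $(2\mu;\,2\nu+1^r,\,\epsilon\equiv -)$ in part~(1), and $(2\mu+1^r;\,2\nu,\,\epsilon\equiv +)$ in part~(2), whose subordinate bipartitions you verify are exactly $(\mu;\nu)$ and the desired target. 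This is a genuinely different route. It has the virtue of identifying the relevant orbit conceptually and reusing the compatibility formulas for $p_V$, $p_{V'}$ already recorded in the paper; on the other hand it imports Johnson's classification (Lemma~2.7) and the subordinate-bipartition machinery as a black box, whereas the paper's factorization argument is entirely elementary and self-contained. You in fact sketch the paper's construction in your parenthetical remark, so you are aware of both. Your closing observation that the extra hypothesis $r>\ell(\nu)$ in part~(2) is used only in the ``$\subseteq$'' direction (via the last clause of Lemma~3.3) is also correct, and matches what happens in the paper's proof.
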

\begin{proof}
For both parts of the proposition, the quotient statement follows from the determination of the image, by Lemma~\ref{lem:fftit}. 
Moreover, that lemma implies that $p_{V'}^{V,V'}((p_V^{V,V'})^{-1}(\overline{\cO_{\mu;\nu}}))$ and $p_{V'}^{V',V}((p_V^{V',V})^{-1}(\overline{\cO_{\mu;\nu}}))$ are closed in $V'\times\cN_{V'}$.

For part~(1), let $(v, x, y) \in (p_V^{V,V'})^{-1}(\overline{\cO_{\mu;\nu}})$.  Let $W \subset V$ be the subspace obtained by invoking Lemma~\ref{lem:enhcl-crit} for the pair $(v,yx)$.  In particular, $v \in W$.  Using Lemma~\ref{lem:jtype-trans}, we find a subspace $W' \subset V'$ containing $x(W)$ and, in particular, the vector $xv$.  The statements about Jordan type of $xy|_{W'}$ and $xy|_{V/W'}$ from that lemma, together with Lemma~\ref{lem:enhcl-crit}, show that $(xv, xy) \in \overline{\cO_{\mu;\nu+1^r}}$. Hence  
$p_{V'}^{V,V'}((p_V^{V,V'})^{-1}(\overline{\cO_{\mu;\nu}})) \subset \overline{\cO_{\mu;\nu + 1^r}}$. 
To prove the reverse inclusion, it suffices to show that for any $(v',z)\in\cO_{\mu;\nu + 1^r}\subset V'\times\cN_{V'}$, there exists some $(v, x, y) \in (p_V^{V,V'})^{-1}(\cO_{\mu;\nu})$ such that $xv=v'$ and $xy=z$. Since $\ell(\nu+1^r)=r\geq\ell(\mu)$, $\im(z)$ has dimension $\dim(V')-r=\dim V$ and contains $v'$. Fixing any vector space isomorphism $x:V\isomto\im(z)$, we can define $v$ and $y$ uniquely by the equations $xv=v'$ and $xy=z$, and it is easy to see that $(v,yx)\in\cO_{\mu;\nu}$, as desired. 

For part~(2), let $(v', x, y) \in (p_V^{V',V})^{-1}(\overline{\cO_{\mu;\nu}})$.  
Let $W \subset V$ be the subspace obtained by invoking Lemma~\ref{lem:enhcl-crit} for the pair $(yv',yx)$, so that $yv' \in W$.  Using the fact that $r > \ell(\nu)$, we may invoke Lemma~\ref{lem:jtype-trans} to find a subspace $\widetilde W' \subset V'$ containing $v'$.  The statements about Jordan type of $xy|_{\widetilde W'}$ and $xy|_{V/{\widetilde W'}}$ from that lemma, together with Lemma~\ref{lem:enhcl-crit}, show that $(v',xy) \in \overline{\cO_{\mu+1^r;\nu}}$. Hence $p_{V'}^{V',V}((p_V^{V',V})^{-1}(\overline{\cO_{\mu;\nu}})) \subset \overline{\cO_{\mu+1^r;\nu}}$.
To prove the reverse inclusion, it suffices to show that for any $(v',z)\in\cO_{\mu+1^r;\nu}\subset V'\times\cN_{V'}$, there exists some $(v', x, y) \in (p_V^{V',V})^{-1}(\cO_{\mu;\nu})$ such that $xy=z$. Since $\ell(\mu+1^r)=r>\ell(\nu)$, $\im(z)$ has dimension $\dim(V')-r=\dim V$. Fixing any vector space isomorphism $x:V\isomto\im(z)$, we can define $y$ uniquely by the equation $xy=z$, and it is easy to see that $(yv,yx)\in\cO_{\mu;\nu}$, as desired. 
\end{proof}

\section{Enhanced Quiver Varieties of Type $A$}
\label{sect:eqv}

Fix a positive integer $n$ and a bipartition $(\mu;\nu) \in \cQ_n$.  Form the partition $\lambda = \mu+\nu$, and let $t = \lambda_1=\ell(\lambda^\bt)$.  That is, $t$ is the largest part of $\lambda$, and it is the number of columns in the diagram of $\lambda$.  It will be convenient to refer to the lengths of these columns in increasing order, so we define
\[
0 < r_0 \le r_1 \le \cdots \le r_{t-1}
\qquad\text{by}\qquad
r_i = \lambda^\bt_{t-i}.
\]
Next, let $I\subset\{0,\cdots,t-1\}$ be the unique subset such that the $r_i$'s for $i\in I$ are the column-lengths of $\mu$ in non-decreasing order, and $r_i=r_{i+1}$ and $i+1\in I$ together imply $i\in I$. We define 
$\Ib=\{0,\cdots,t-1\}\setminus I$, so that the $r_i$'s for $i\in\Ib$ are the column-lengths of $\nu$ in non-decreasing order.  (Note that together, the sequence $(r_i)$ and the set $I$ determine the bipartition $(\mu;\nu)$.)
Let $U_i=\C^{r_0+\cdots+r_{i-1}}$ for $0\leq i\leq t$. Note that $U_0=0$, $U_t=\C^n$. It is primarily $U_t$ which we think of as the vector space $V$ in the definition of enhanced nilpotent orbits.

The notation and conventions of the preceding paragraph will remain in effect for the next three sections.  The aim of this section is to define the `enhanced quiver variety' associated to these data, and to prove that normality of that variety implies the normality of $\overline{\cO_{\mu;\nu}}$.  In the subsequent two sections, we will make progress on studying the normality of enhanced quiver varieties. Throughout, we are guided by the results of Kraft--Procesi~\cite{kp} in the unenhanced case, which in our framework is the special case where $I=\varnothing$ (that is, $\mu=\varnothing$). 

\subsection{Review of results of Kraft--Procesi}

We first recall the `classical' version of our variety, denoted $Z$ in \cite{kp}.
\begin{defn}
Let $\Lambda_\lambda=\Lambda_{(r_i)}$ be the affine variety
consisting of tuples $(A_i,B_i)$ of linear maps arranged as follows:
\[
\xymatrix{
U_0 \ar@/^/[r]^{A_0} &
U_1 \ar@/^/[l]^{B_0} \ar@/^/[r]^{A_1} &
U_2 \ar@/^/[l]^{B_{1}} \ar@/^/[r]^{A_2} &
\cdots \ar@/^/[l]^{B_{2}}
\ar@/^/[r]^{A_{t-2}} &
U_{t-1} \ar@/^/[l]^{B_{t-2}} \ar@/^/[r]^{A_{t-1}} &
U_t\ar@/^/[l]^{B_{t-1}}
}
\]
which satisfy the equations
\[ B_iA_i=A_{i-1}B_{i-1}\text{ (equation in $\End(U_i)$) for all $1\leq i\leq t-1$.} \]
\end{defn}
Since $A_0$ and $B_0$ are zero maps, the first equation says that $B_1A_1=0$; it
follows that $(A_{i-1},B_{i-1})$ is a nilpotent pair for all $1\leq i\leq t$.

Note that the group $\prod_{i=0}^{t}GL(U_i)$
acts on $\Lambda_{(r_i)}$ via
\[ (g_i).(A_i,B_i)=(g_{i+1}A_ig_i^{-1},g_iB_ig_{i+1}^{-1}). \]
Later we will be taking a quotient by the action of
$H=\prod_{i=0}^{t-1}GL(U_i)$, but retaining the action of $GL(U_t)=GL_n(\C)$;
the vector space $U_t$ is in that sense on a different footing from the other $U_i$'s.

\begin{rmk}
In the context of quiver varieties,
$\Lambda_{(r_i)}$ is a special case of Nakajima's variety of quadruples
satisfying the ADHM equation, where the Dynkin diagram is that of type $A_{t-1}$.
In the notation of \cite{maffei}, $\Lambda_{(r_i)}=\Lambda(d,v)$ where
$d=(0,\cdots,0,n)$ and $v=(r_0,r_0+r_1,\cdots,r_0+\cdots+r_{t-2})$. 
\end{rmk}

We also introduce notation for the `naively expected dimension' of $\Lambda_{(r_i)}$,
i.e.\ the number of coordinates of the variables $A_i,B_i$ minus the number
of equations in those coordinates in the definition of the variety:
\[ 
\begin{split}
d_{(r_i)}&=2\sum_{i=0}^{t-1}(\dim U_i)(\dim U_{i+1})
-\sum_{i=1}^{t-1}(\dim U_i)^2\\
&=\sum_{i=1}^{t-1}(r_0+\cdots+r_{i-1})^2+2\sum_{0\leq i<j\leq t-1}r_ir_j.
\end{split}
\]

\begin{exam} \label{exam:tequals2}
When $t=2$ (and ignoring the zero maps), 
the variety $\Lambda_{(r_0,r_1)}$ consists of pairs of maps
\[
\xymatrix{
U_1 \ar@/^/[r]^{A_1} &
U_2 \ar@/^/[l]^{B_{1}}}\text{ such that }B_1A_1=0.
\]
Note that the kernel of any $B_1:U_2\to U_1$ 
has dimension at least $\dim U_2-\dim U_1=r_1$, and by assumption $r_1\geq r_0=\dim U_1$.
It follows that the pairs $(A_1,B_1)$ where
$A_1$ is injective form a dense open subvariety of $\Lambda_{(r_0,r_1)}$. This
open subvariety is a fibre bundle over the 
Grassmannian $\Gr_{r_0}(U_2)$, with fibres isomorphic to
$GL_{r_0}(\C)\times\Hom(\C^{r_1},\C^{r_0})$. So $\Lambda_{(r_0,r_1)}$ is
irreducible of dimension $r_0^2+2r_0r_1=d_{(r_0,r_1)}$. 
\end{exam}

Kraft and Procesi proved in general that $\Lambda_{(r_i)}=\Lambda_\lambda$ 
is not just irreducible:
\begin{thm}[{\cite[Theorem 3.3]{kp}}]\label{thm:kp}
$\Lambda_{(r_i)}$ is a normal complete intersection of dimension $d_{(r_i)}$.
\end{thm}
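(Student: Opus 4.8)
The plan is to prove that $\Lambda_{(r_i)}$ is a normal complete intersection of dimension $d_{(r_i)}$ by induction on $t$, following the Kraft--Procesi strategy \cite{kp}. The base case $t=1$ is trivial (the variety is a point), and the case $t=2$ is handled explicitly in Example~\ref{exam:tequals2}. For the inductive step, the key is to exhibit $\Lambda_{(r_0,\dots,r_{t-1})}$ as built from $\Lambda_{(r_0,\dots,r_{t-2})}$ together with the last pair of maps $(A_{t-1},B_{t-1})$ subject to the single matrix equation $B_{t-1}A_{t-1}=A_{t-2}B_{t-2}$ in $\End(U_{t-1})$.

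First I would set up the \emph{dimension count}. One checks that $d_{(r_i)}$ is exactly the number of coordinates among the $A_i,B_i$ minus the number of equations, as in the definition; so it suffices to prove that $\Lambda_{(r_i)}$ is a complete intersection of that dimension, since a complete intersection cut out by the expected number of equations automatically has the expected dimension and is Cohen--Macaulay, hence satisfies Serre's condition $(S_2)$. Then by Serre's criterion, normality will follow once we verify regularity in codimension one, i.e.\ $(R_1)$. Thus the proof splits into: (a) $\Lambda_{(r_i)}$ is a complete intersection of dimension $d_{(r_i)}$ (equivalently, irreducible of that dimension, once one knows it is cut out by the right number of equations in an irreducible ambient space and each equation genuinely drops the dimension), and (b) the singular locus has codimension $\geq 2$.

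For part (a), I would argue that the map $\Lambda_{(r_i)}\to\Lambda_{(r_0,\dots,r_{t-2})}$ forgetting $(A_{t-1},B_{t-1})$ has image and fibres that can be analyzed: over the dense open locus of $\Lambda_{(r_0,\dots,r_{t-2})}$ where things are suitably generic (e.g.\ $A_{t-2}$ injective, as in Example~\ref{exam:tequals2}), the fibre is the variety of pairs $(A_{t-1},B_{t-1})$ with $B_{t-1}A_{t-1}$ equal to a fixed nilpotent endomorphism of rank $\leq r_{t-2}$ of $U_{t-1}$; since $\dim U_t - \dim U_{t-1} = r_{t-1}\geq r_{t-2}\geq \ell$ of that nilpotent, this fibre is irreducible of the expected dimension by the same fibre-bundle-over-a-Grassmannian argument. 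Combined with the inductive hypothesis on the base, this shows the relevant component of $\Lambda_{(r_i)}$ has the expected dimension; a separate argument (estimating dimensions of the loci where $A_{t-2}$ has various ranks, or directly bounding the dimension of every component using that the $t-1$ equations can each cut down the dimension by the full amount) shows there are no extra components and that the $t-1$ matrix equations form a regular sequence, giving the complete intersection property.

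The hard part will be (b), controlling the singular locus. Following \cite{kp}, I would stratify $\Lambda_{(r_i)}$ according to the isomorphism type of the tuple of maps — equivalently, by the orbits of $H=\prod_{i=0}^{t-1}GL(U_i)$ (together with $GL(U_t)$) — and show that the open dense stratum is smooth while each smaller stratum has codimension $\geq 2$, except possibly for strata that one shows lie in the smooth locus anyway by an explicit local computation of the Jacobian of the defining equations. The subtlety is that some boundary strata can have codimension exactly one; for those, one must verify smoothness of $\Lambda_{(r_i)}$ at their generic points directly, by showing the differentials of the $\dim\End(U_i)$ coordinates of the equation $B_iA_i - A_{i-1}B_{i-1}=0$ are linearly independent at such a point. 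This is precisely the delicate linear-algebra heart of the Kraft--Procesi argument, and it is where one genuinely uses the hypothesis $r_0\leq r_1\leq\cdots\leq r_{t-1}$ (i.e.\ that the column lengths are increasing) to guarantee enough room in each $U_{i+1}$ relative to $U_i$. I expect that step — the codimension-one stratum analysis — to be the main obstacle, and I would lean on \cite[Theorem 3.3]{kp} for it, since the statement here is literally that theorem.
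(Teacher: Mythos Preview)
The paper does not prove this theorem at all: it is quoted verbatim from Kraft--Procesi \cite[Theorem~3.3]{kp} and used as a black box. That said, the paper's own machinery in Sections~\ref{sect:singloc}--\ref{sect:stratdim}, specialized to $I=\varnothing$, essentially reconstructs the Kraft--Procesi argument: Proposition~\ref{prop:nonsingloc} shows that the open set $\Lambda^\circ$ (where each $A_j$ is injective or $B_j$ surjective) is smooth of dimension $d_{(r_i)}$; Theorem~\ref{thm:nonsingrel} reduces normality and the complete-intersection property to the bound $\dim(\Lambda\smallsetminus\Lambda^\circ)\le d_{(r_i)}-2$ via Serre's criterion; and Theorem~\ref{thm:kpstrata} records exactly the stratum-dimension estimates from \cite{kp} needed for that bound.

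Your outline gets the architecture (complete intersection $+$ $(R_1)$ $\Rightarrow$ normal) right, but the route you propose for part~(a) differs from Kraft--Procesi's. You want to prove irreducibility and the dimension count by induction on $t$, analysing the fibres of the map that forgets $(A_{t-1},B_{t-1})$. Kraft--Procesi do \emph{not} argue this way: they use the orbit-type stratification for everything, bounding $\dim\Lambda^\xi\le d_{(r_i)}$ for every stratum $\xi$ (this is part~(1) of Theorem~\ref{thm:kpstrata}), which simultaneously yields the dimension, irreducibility (via the uniqueness in part~(2)), and the codimension-2 bound on the complement of $\Lambda^\circ$ (via part~(3)). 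Your inductive fibre argument is plausible over the generic locus, but the step you flag as ``a separate argument \dots\ shows there are no extra components'' is exactly where the stratification estimates are doing real work, and it is not clear your inductive setup supplies a substitute; you would still need to bound fibre dimensions over every stratum of $\Lambda_{(r_0,\dots,r_{t-2})}$, which amounts to redoing the stratum analysis. Since you end by deferring the codimension-one stratum analysis to \cite{kp} anyway, the cleanest thing is to adopt the stratification approach uniformly, as the paper does.
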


\begin{rmk}
The conventions imposed at the beginning of the section imply that $r_0\leq r_1\leq\cdots\leq r_{t-1}$, and the theorem depends on this assumption.  If the $r_i$'s are not weakly increasing, $\Lambda_{(r_i)}$ may still be defined as above, but it may not
even be irreducible, let alone normal.  
\end{rmk}

\subsection{Enhanced quiver varieties}

Now we `enhance' $\Lambda_{(r_i)}$ by incorporating vectors which are related by the
linear maps, in a way determined by the subset $I\subset\{0,\cdots,t-1\}$. Roughly, the idea is that each nilpotent pair $(A_i,B_i)$ in the definition of $\Lambda_{(r_i)}$ should be replaced by an enhanced nilpotent pair; the question is which of the vector spaces $U_i$ and $U_{i+1}$ should be distinguished as the one containing the vector. Proposition~\ref{prop:itq} tells us that if $i\in\Ib$, meaning that the corresponding column belongs to the $\nu$ side of the bipartition, the vector should belong to $U_i$; and if $i\in I$, meaning that the corresponding column belongs to the $\mu$ side of the bipartition, the vector should belong to $U_{i+1}$. In each case we obtain a vector in the other vector space by applying the appropriate map. There is then a natural consistency condition when the enhanced nilpotent pairs are assembled together, resulting in the following definition.

\begin{defn}
Let $\Lambda_{\mu;\nu}=\Lambda_{(r_i),I}$ be the closed subvariety of
$(\prod_{i=0}^t U_i)\times\Lambda_{(r_i)}$ consisting of those $(u_i,A_i,B_i)$ which
satisfy the equations
\[
\begin{split}
B_{i}u_{i+1}&=u_{i}\text{ (equation in $U_{i}$), for all $i\in I$, and}\\
A_{i}u_{i}&=u_{i+1}\text{ (equation in $U_{i+1}$), for all 
$i\in \Ib$.} 
\end{split}
\]
\end{defn}
Since $u_0=0$, the second equation implies that $u_1=\cdots=u_{k}=0$, where
$k$ is the minimal element of $I$; or that $u_1=\cdots=u_t=0$, in the case that
$I=\varnothing$. More generally, the two equations imply that all $u_i$'s can be
determined from those indexed by 
$i\in (I+1)\setminus I$.
\begin{exam} \label{exam:112}
In the variety $\Lambda_{(1,1,2),\{0,2\}}$ attached to the bipartition $((2,1);(1))$, the equations satisfied by $u_0,u_1,u_2,u_3$ are $B_0 u_1=u_0$, $A_1 u_1=u_2$, and $B_2 u_3=u_2$. The first of these equations is automatic because $U_0=0$. Setting $u=u_1$ and $v=u_3$, we eliminate $u_2$ and get the single equation $A_1 u=B_2 v$, recovering 
Example \ref{exam:intro}.
\end{exam}

The action of $\prod_{i=0}^{t}GL(U_i)$ on $\Lambda_{(r_i)}$ extends to 
$\Lambda_{(r_i),I}$ in the obvious way:
\[ (g_i).(u_i,A_i,B_i)=(g_iu_i,g_{i+1}A_ig_i^{-1},g_iB_ig_{i+1}^{-1}). \]

The `naively expected dimension' of $\Lambda_{(r_i),I}$ is given by
\[
\begin{split}
d_{(r_i),I}&=d_{(r_i)}+\sum_{i=0}^{t} r_0+\cdots+r_{i-1} - \sum_{i\in I}
r_0+\cdots+r_{i-1} - \sum_{i\in \Ib} r_0+\cdots+r_{i}\\
&=d_{(r_i)}+\sum_{i\in I}r_{i}=d_{(r_i)}+|\mu|.
\end{split}
\]

Recall that the conventions in force imply that $r_0 \le r_1 \le \cdots \le r_{t-1}$, and also that whenever $r_i = r_{i+1}$ and $i+1 \in I$, we have $i \in I$ as well.  Theorem~\ref{thm:kp} is a special case (where $I = \varnothing$) of the following conjecture.

\begin{conj} \label{conj:lambdanormal}
$\Lambda_{(r_i),I}$ is a normal
complete intersection of dimension $d_{(r_i),I}$.
\end{conj}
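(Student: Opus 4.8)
The plan is to mimic the proof of Theorem~\ref{thm:kp} (Kraft--Procesi) as closely as possible, using the quotient results already proved in Sections~\ref{sect:ivt} and~\ref{sect:eqv} to reduce the enhanced statement to the classical one. The key structural input is that $\Lambda_{(r_i),I}$ is built from $\Lambda_{(r_i)}$ by imposing the $u_i$-equations, and that by Proposition~\ref{prop:itq} these equations are, at each stage, exactly the kind of equations whose solution variety is an inverse image under a map of the form $(p_V^{V,V'})^{-1}$ or $(p_V^{V',V})^{-1}$. The main tool will be an induction on $t$ (the number of columns of $\lambda$), together with the observation that forgetting the last column exhibits $\Lambda_{(r_i),I}$ as fibered, in a controlled way, over $\Lambda_{(r_i'),I'}$ for the truncated data $(r_0,\dots,r_{t-2})$ and $I'=I\cap\{0,\dots,t-2\}$.

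Concretely, the steps I would carry out are: (1) Establish the dimension count. Since $\Lambda_{(r_i),I}$ is cut out inside $(\prod U_i)\times\Lambda_{(r_i)}$ by the listed linear equations in the $u_i$'s, and $\Lambda_{(r_i)}$ is a complete intersection of dimension $d_{(r_i)}$ by Theorem~\ref{thm:kp}, it suffices to show that the $u_i$-equations are themselves a regular sequence on $(\prod U_i)\times\Lambda_{(r_i)}$, or at least that they cut the dimension down by exactly the expected amount $\sum_{i\in I} r_i = |\mu|$; this would give the complete-intersection claim and the formula $d_{(r_i),I}=d_{(r_i)}+|\mu|$. (2) Prove irreducibility. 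Here I would argue that the generic point of $\Lambda_{(r_i),I}$ lies over the generic (dense) orbit in the relevant enhanced nilpotent pair, using Lemma~\ref{lem:johnson} and the explicit fibre-bundle descriptions analogous to Example~\ref{exam:tequals2}, so that the open locus where all the $A_i$ are injective (or of maximal rank) is irreducible and dense. (3) Prove normality via Serre's criterion $R_1+S_2$: $S_2$ follows from the complete-intersection property in (1), so the work is to show regularity in codimension~$1$, i.e.\ that $\codim(\mathrm{Sing}\,\Lambda_{(r_i),I})\ge 2$. For this I would use the induction: away from a small locus, the forgetful map to $\Lambda_{(r_i'),I'}$ is smooth with irreducible fibres (the fibre being governed by the choice of the maps and vector associated to the new last column, subject to $B_{t-1}A_{t-1}=A_{t-2}B_{t-2}$ and the $u_t$-equation), so singularities of the total space come either from singularities of the base (handled by induction and Theorem~\ref{thm:kp}) or from the bad locus of the map, which one shows has codimension $\ge 2$.

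The main obstacle I expect is step~(3), the codimension-two estimate on the singular locus — indeed the whole architecture of the paper (Figure~\ref{fig:conj}) is set up precisely because this is hard: it is what gets reduced to the dimension estimate for the singular locus of $\Lambda$ (Conjecture~\ref{conj:dimbound}) via Theorem~\ref{thm:nonsingrel}, and thence to the purely combinatorial Conjecture~\ref{conj:strata}. So rather than proving $R_1$ directly and unconditionally, a realistic plan is: reduce normality of $\Lambda_{(r_i),I}$ to a bound on the codimension of each stratum of a natural stratification of $\Lambda_{(r_i),I}$ by the types (in the sense of Lemma~\ref{lem:johnson}) of the enhanced nilpotent pairs $(u_i, A_i, B_i)$ appearing at each node, show that the stratum indexed by a chain of signed quasibipartitions has codimension controlled by a sum of terms like the ones in Lemma~\ref{lem:dim-norb} and Lemma~\ref{lem:enpdim}, and finally use Remark~\ref{rmk:lem-inequality} to see that the only way a stratum can have codimension~$1$ is through a type-$(4)$ move (where the defect is zero) — which is then to be ruled out by the combinatorial conjecture. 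The complete-intersection and irreducibility parts (steps~(1) and~(2)), by contrast, should go through unconditionally by the same local fibration arguments that Kraft--Procesi used, adapted using Proposition~\ref{prop:itq}.
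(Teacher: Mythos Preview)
The statement you are attempting is Conjecture~\ref{conj:lambdanormal}, which the paper does \emph{not} prove in general; it is established only in the special cases of Theorems~\ref{thm:easy} and~\ref{thm:main} (and verified by computer for $n\le 6$). Your description of step~(3) is essentially the paper's own strategy: reduce normality to a codimension-$2$ bound on the complement of the nonsingular locus $\Lambda^\circ$ (Conjecture~\ref{conj:dimbound}), which via Theorem~\ref{thm:nonsingrel} and Serre's criterion yields normality, complete intersection, reducedness, and irreducibility \emph{simultaneously}; then reduce that bound to stratum-by-stratum dimension estimates (Conjecture~\ref{conj:strata}) using Proposition~\ref{prop:dimbound} and Lemma~\ref{lem:enpdim}.

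The genuine gap is your final claim that steps~(1) and~(2) --- complete intersection and irreducibility --- ``should go through unconditionally by the same local fibration arguments that Kraft--Procesi used''. They do not, and the paper makes no such claim. Even the bare inequality $\dim\Lambda_{(r_i),I}\le d_{(r_i),I}$ is Conjecture~\ref{conj:strata}(1), left open in general (verified only up to $n=9$); without it you cannot conclude that the $u_i$-equations form a regular sequence. The point is that while those equations are linear in the $u_i$'s, their coefficients depend on $(A_i,B_i)\in\Lambda_{(r_i)}$, so the fibre of $\Lambda_{(r_i),I}\to\Lambda_{(r_i)}$ is a linear space whose dimension jumps, and bounding the total dimension comes down to bounding the dimension of the jumping loci --- which is again the stratum estimate. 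Your proposed inductive fibration $\Lambda_{(r_i),I}\to\Lambda_{(r_i'),I'}$ has the same defect: when $t-1\in I$, the fibre over $(u_i',A_i',B_i')$ consists of triples $(u_t,A_{t-1},B_{t-1})$ with $B_{t-1}A_{t-1}=A_{t-2}'B_{t-2}'$ and $B_{t-1}u_t=u_{t-1}'$, whose dimension depends on the enhanced-nilpotent-pair type at node $t-1$, i.e.\ on which stratum of the base we are over. Likewise, in Kraft--Procesi irreducibility is not proved by a separate fibration argument but falls out of their analogue of Theorem~\ref{thm:nonsingrel}; there is no easier unenhanced proof of it to adapt. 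In short, the parts of the conjecture you regard as routine are already entangled with the hard combinatorial part, which is why the paper presents the whole statement as a conjecture rather than a theorem.
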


\begin{exam}\label{exam:lambdanormal}
Suppose that $t=2$ as in Example \ref{exam:tequals2}.
For the four different possible $I$'s,
the equations required of $u_1\in U_1$ and $u_2\in U_2$ are as follows:
\[
\begin{array}{ll}
I=\emptyset:\; u_1=u_2=0,
&I=\{0\}:\; A_1u_1=u_2,\\
I=\{1\}:\; u_1=0,\,B_1u_2=0,
&I=\{0,1\}:\; B_1u_2=u_1.
\end{array}
\]
So $\Lambda_{(r_0,r_1),\emptyset}\cong\Lambda_{(r_0,r_1)}$,
$\Lambda_{(r_0,r_1),\{0\}}\cong U_1\times\Lambda_{(r_0,r_1)}$,
and $\Lambda_{(r_0,r_1),\{0,1\}}\cong U_2\times\Lambda_{(r_0,r_1)}$; in all these
cases Conjecture~\ref{conj:lambdanormal} is an immediate consequence of Theorem 
\ref{thm:kp}. If $I=\{1\}$, then by assumption we have $r_0<r_1$, and 
$\Lambda_{(r_0,r_1),\{1\}}$ may be proved to be
irreducible of dimension $d_{(r_0,r_1),\{1\}}$ by an argument similar to that
in Example \ref{exam:tequals2}, using the dense open subvariety consisting of
triples $(u_2,A_1,B_1)$ where $\dim(\im(A_1)+\C u_2)=r_0+1$. 
The normality of 
$\Lambda_{(r_0,r_1),\{1\}}$ will be proved later, as a special case of Theorem~\ref{thm:main}.
(If we allowed $r_0$ to equal $r_1$, we would find that $\Lambda_{(r_0,r_1),\{1\}}$
had two irreducible components.) 
\end{exam}

The following special case of Conjecture~\ref{conj:lambdanormal} is immediate from Kraft and Procesi's result.

\begin{thm} \label{thm:easy}
If $I=\{0,1,\cdots,s-1\}$ for some $0\leq s\leq t$, then $\Lambda_{(r_i),I}$ is a normal
complete intersection of dimension $d_{(r_i),I}$.
\end{thm}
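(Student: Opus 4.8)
The plan is to exploit the special form of the vector-equations defining $\Lambda_{(r_i),I}$ when $I$ is an initial segment, reducing everything to Theorem~\ref{thm:kp}. Suppose $I=\{0,1,\dots,s-1\}$, so $\Ib=\{s,\dots,t-1\}$; with the convention $U_0=0$ (which handles the case $s=0$), the unique element of $(I+1)\setminus I$ is $s$. The equations on the $u_i$ read $u_i=B_iu_{i+1}$ for $0\le i\le s-1$ and $u_{i+1}=A_iu_i$ for $s\le i\le t-1$. Reading the first family downwards from $i=s-1$ expresses $u_{s-1},u_{s-2},\dots,u_0$ successively as polynomials in $u_s$ and the $B_j$, and in particular forces $u_0=B_0u_1=0$ automatically since $B_0=0$; reading the second family upwards expresses $u_{s+1},\dots,u_t$ as polynomials in $u_s$ and the $A_j$. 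Thus the only free vector variable is $u_s\in U_s$, and, crucially, the equations impose \emph{no} residual condition on $(A_i,B_i)\in\Lambda_{(r_i)}$.

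I would then establish an isomorphism of affine varieties $\Lambda_{(r_i),I}\isomto U_s\times\Lambda_{(r_i)}$ given by $(u_i,A_i,B_i)\mapsto(u_s,(A_i,B_i))$, whose inverse reconstructs the remaining $u_i$ by the formulas $u_{s-1}=B_{s-1}u_s,\ u_{s-2}=B_{s-2}B_{s-1}u_s,\dots$ and $u_{s+1}=A_su_s,\ u_{s+2}=A_{s+1}A_su_s,\dots$; both maps are morphisms, and they are mutually inverse by the previous paragraph. Now Theorem~\ref{thm:kp} says $\Lambda_{(r_i)}$ is a normal complete intersection of dimension $d_{(r_i)}$, and crossing with the affine space $U_s$ preserves both: a product of a normal variety with affine space is normal, and the product of a complete intersection with affine space is again a complete intersection (the ADHM equations remain a regular sequence in the larger polynomial ring). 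For the dimension, $\dim(U_s\times\Lambda_{(r_i)})=\dim U_s+d_{(r_i)}=(r_0+\cdots+r_{s-1})+d_{(r_i)}$; since the $r_i$ with $i\in I=\{0,\dots,s-1\}$ are exactly the column lengths of $\mu$, we get $r_0+\cdots+r_{s-1}=|\mu|$, so this equals $d_{(r_i)}+|\mu|=d_{(r_i),I}$. If one wants the complete-intersection property relative to the given presentation of $\Lambda_{(r_i),I}$ inside $(\prod_i U_i)\times\Lambda_{(r_i)}$, one further observes that the $u$-equations, ordered so as to eliminate $u_{s-1},\dots,u_0$ and then $u_{s+1},\dots,u_t$, themselves form a regular sequence, after which the ADHM equations form a regular sequence in the remaining polynomial ring.

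I do not anticipate a genuine obstacle here: this is the ``easy'' case, and the whole argument is the $t$-general version of the product decompositions already carried out for $t=2$ in Example~\ref{exam:lambdanormal}. The one point requiring care is confirming that the initial-segment hypothesis is precisely what makes the $u$-equations triangular and constraint-free --- in particular that $u_0=0$ is \emph{forced} rather than imposed as an extra equation. For a general $I$ the analogous elimination yields genuine conditions (for instance $B_iu_{i+1}=0$ whenever $i\notin I$ but $i+1\in I$), and it is exactly this that makes Conjecture~\ref{conj:lambdanormal} substantive.
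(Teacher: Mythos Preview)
Your proof is correct and follows essentially the same route as the paper's: both reduce the vector-equations to the formulas $u_i=B_iB_{i+1}\cdots B_{s-1}u_s$ for $i<s$ and $u_i=A_{i-1}\cdots A_su_s$ for $i>s$, deduce the isomorphism $\Lambda_{(r_i),I}\cong U_s\times\Lambda_{(r_i)}$, and then invoke Theorem~\ref{thm:kp} together with the identity $d_{(r_i),I}=d_{(r_i)}+\dim U_s$. Your additional remarks on why the product with affine space preserves normality and the complete-intersection property are sound but not strictly needed, as the paper is content to read these off directly from the product decomposition.
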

\begin{proof}
When $I$ has this special form, the conditions on the $u_i$'s for $(u_i,A_i,B_i)\in\Lambda_{(r_i),I}$ are equivalent to
\[
u_i=\begin{cases}
B_{i+1}B_{i+2}\cdots B_{s-1}u_s,&\text{ if $i<s$,}\\
A_{i-1}A_{i-2}\cdots A_su_s,&\text{ if $i>s$.}
\end{cases}
\]
Hence $\Lambda_{(r_i),I}\cong U_s\times\Lambda_{(r_i)}$. Moreover,
\[
d_{(r_i),I}=d_{(r_i)}+\sum_{i=0}^{s-1}r_i=d_{(r_i)}+\dim U_s.
\]
So the result follows from Theorem~\ref{thm:kp}.
\end{proof}

In the next two sections we will make further progress on Conjecture~\ref{conj:lambdanormal}, culminating in the proof of a different special case in Theorem~\ref{thm:main}.

\subsection{Normality for enhanced nilpotent orbits}

As mentioned in the introduction, Conjecture~\ref{conj:lambdanormal} implies Conjecture~\ref{conj:enhnorm} by virtue of the following result.

\begin{thm}\label{thm:normrel}
If $\Lambda_{\mu;\nu} = \Lambda_{(r_i),I}$ is a normal variety, then $\overline{\cO_{\mu;\nu}}$ is normal.
\end{thm}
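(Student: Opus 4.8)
The plan is to realize $\overline{\cO_{\mu;\nu}}$ as an invariant-theoretic quotient of $\Lambda_{(r_i),I}$ by a reductive group, and then to invoke the standard fact that a GIT quotient of a normal affine variety is normal (since $\C[\Lambda]^H$ is a subring of the normal ring $\C[\Lambda]$ which is integrally closed in its own fraction field, being the invariants). The subtlety is that the quotient statement does not follow directly from a single application of Proposition~\ref{prop:itq}; it must be built up column by column, peeling off one $U_i$ at a time and applying the proposition $t$ times.

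First I would recall the setup: $\Lambda_{(r_i),I}$ carries an action of $H = \prod_{i=0}^{t-1} GL(U_i)$ (retaining the residual $GL(U_t) = GL(V)$-action), and I claim there is a natural $GL(V)$-equivariant map $\Phi: \Lambda_{(r_i),I} \to V \times \cN_V$ whose image is $\overline{\cO_{\mu;\nu}}$ and which induces an isomorphism $H \git \Lambda_{(r_i),I} \isomto \overline{\cO_{\mu;\nu}}$. The map sends $(u_i, A_i, B_i)$ to the pair $(u_t, B_{t-1}A_{t-1}) \in U_t \times \End(U_t)$ (recall $U_t = V$, and $B_{t-1}A_{t-1}$ is nilpotent because $(A_{t-1}, B_{t-1})$ is a nilpotent pair). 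That this lands in $\overline{\cO_{\mu;\nu}}$, and that every point of $\overline{\cO_{\mu;\nu}}$ is hit, should follow by an iterated application of Proposition~\ref{prop:itq}: write $\Lambda_{(r_i),I}$ as a tower
\[
\Lambda_{(r_i),I} = \Lambda^{(t)} \to \Lambda^{(t-1)} \to \cdots \to \Lambda^{(1)} \to \Lambda^{(0)} = \mathrm{pt},
\]
where $\Lambda^{(j)}$ is built from the first $j+1$ spaces $U_0, \ldots, U_j$ together with the maps and vectors among them. At each stage, passing from $\Lambda^{(j)}$ to $\Lambda^{(j-1)}$ is (a $GL$-quotient version of) the correspondence $(p_V^{V,V'})^{-1}$ followed by $p_{V'}^{V,V'}$ if $j-1 \in \Ib$, or the corresponding construction with roles of $V$ and $V'$ swapped if $j-1 \in I$; here $V = U_{j-1}$ and $V' = U_j$, so $r = r_{j-1} = \dim U_j - \dim U_{j-1} = \lambda^\bt_{t-j+1}$. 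The hypotheses $r \ge \ell(\cdots)$ and $r > \ell(\cdots)$ needed in Proposition~\ref{prop:itq} translate into the conventions imposed at the start of Section~\ref{sect:eqv}: the column-length ordering $r_0 \le \cdots \le r_{t-1}$ and the tie-breaking rule on $I$ guarantee exactly the strict/non-strict inequalities required at each step. Inductively, the image of $\Lambda^{(j)}$ under the appropriate projection is $\overline{\cO_{\mu^{\le j}; \nu^{\le j}}}$ where $(\mu^{\le j}; \nu^{\le j})$ is the bipartition whose columns are indexed by $\{0, \ldots, j-1\}$, and taking $j = t$ gives the claim.

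For the quotient statement I would assemble the isomorphisms from the individual applications of Proposition~\ref{prop:itq} (each of which is an isomorphism of $GL$-quotients onto an orbit closure, by that proposition). Concretely, letting $H_j = \prod_{i=0}^{j-1} GL(U_i)$, one shows by induction that $H_j \git \Lambda^{(j)} \cong \overline{\cO_{\mu^{\le j}; \nu^{\le j}}}$, using at each step that quotienting can be done in stages (quotient first by $GL(U_{j-1})$, the ``innermost'' new factor, then by $H_{j-1}$), that the relevant fibre products are cut out by $GL(U_{j-1})$-equivariant closed embeddings, and that over $\C$ taking $H$-invariants commutes with closed embedding as noted just before Lemma~\ref{lem:fftit}. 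Setting $j = t$ yields $H \git \Lambda_{(r_i),I} \cong \overline{\cO_{\mu;\nu}}$ as $GL(V)$-varieties.

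The main obstacle, I expect, is the bookkeeping in the inductive step: one must carefully verify that the closed subvariety $\Lambda^{(j)}$ of $\Lambda^{(j-1)} \times (\text{new data})$ is precisely the preimage under one projection of $\Lambda^{(j-1)}$ intersected with the appropriate $\cN_{U_{j-1}, U_j}$ (or $U_{j-1}' \times \cN$) locus, so that Proposition~\ref{prop:itq} applies on the nose, and that the vector-consistency equations defining $\Lambda_{(r_i),I}$ match the vector-transport conditions ($xv = v'$ or $yv' = $ input vector) appearing in the two cases of that proposition. Checking that the numerical hypotheses $r \ge \ell(\mu+\nu)$ and $r > \ell(\nu)$ hold at every stage — which is where the ordering convention $r_0 \le \cdots \le r_{t-1}$ and the tie-breaking in the definition of $I$ are used — is routine but must be done explicitly. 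Once the tower is set up correctly, normality is immediate: $\C[\Lambda_{(r_i),I}]$ is normal by hypothesis, and $\C[\overline{\cO_{\mu;\nu}}] = \C[\Lambda_{(r_i),I}]^{H}$ is a ring of invariants of a reductive group acting on a normal affine variety, hence normal.
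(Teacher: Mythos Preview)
Your proposal is correct and follows essentially the same route as the paper: reduce to exhibiting $\overline{\cO_{\mu;\nu}}$ as $H\git\Lambda_{(r_i),I}$ (the paper's Theorem~\ref{thm:gitquot}), proved by induction on $t$ via a cartesian square and one application of Proposition~\ref{prop:itq} at each step, then conclude normality because invariants of a reductive group preserve normality. One small slip: the map $\Phi$ should send $(u_i,A_i,B_i)$ to $(u_t,\,A_{t-1}B_{t-1})$, not $(u_t,\,B_{t-1}A_{t-1})$, since $A_{t-1}B_{t-1}\in\End(U_t)$ whereas $B_{t-1}A_{t-1}\in\End(U_{t-1})$.
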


To prove Theorem~\ref{thm:normrel}, it suffices to exhibit $\overline{\cO_{\mu;\nu}}$ as an invariant-theoretic quotient of $\Lambda_{(r_i);I}$ by a reductive group, since passage to such a quotient preserves normality. The precise statement, generalizing the $I=\varnothing$ case proved by Kraft and Procesi \cite[Theorem 3.3]{kp}, is as follows.

\begin{thm}\label{thm:gitquot}
Let $H = GL(U_0) \times \cdots \times GL(U_{t-1})$.  Then the map
\[
\Phi:\Lambda_{(r_i),I}\to U_t\times\cN_{U_t}:(u_i,A_i,B_i)\mapsto(u_{t},A_{t-1}B_{t-1})
\]
has image $\overline{\cO_{\mu;\nu}}$ and induces an isomorphism
\[
H \git \Lambda_{(r_i),I} \isomto \overline{\cO_{\mu;\nu}}.
\]
\end{thm}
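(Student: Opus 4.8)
The plan is to follow the strategy of Kraft--Procesi~\cite[Theorem 3.3]{kp} for the unenhanced case, but to reduce the general statement to that case (plus Theorem~\ref{thm:easy}) by an inductive argument on the structure of the subset $I$, using the transfer results of Section~\ref{sect:ivt}. The key observation is that the enhanced quiver variety $\Lambda_{(r_i),I}$ is built out of a chain of enhanced-nilpotent-pair spaces $V_i\times\cN_{U_i,U_{i+1}}$ (or its mirror, depending on whether $i\in I$ or $i\in\Ib$), glued along the middle spaces $U_i$, and that applying $\bar p$-type maps column by column from left to right realizes the $\GL(U_i)$-quotient for $i<t$ one factor at a time. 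Concretely, I would set $H_j=\prod_{i=0}^{j}GL(U_i)$ and argue by descending induction that $H_j\git\Lambda_{(r_i),I}$ is (isomorphic to) an enhanced quiver variety with one fewer column, whose orbit-closure image is controlled by Proposition~\ref{prop:itq}.

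First I would verify that $\Phi$ is $H$-invariant and that its image lies in $\overline{\cO_{\mu;\nu}}$: given $(u_i,A_i,B_i)\in\Lambda_{(r_i),I}$, the defining equations $B_iA_i=A_{i-1}B_{i-1}$ force $A_{t-1}B_{t-1}$ to be nilpotent (as already noted after the definition of $\Lambda_{(r_i)}$), and one reads off an $x$-stable flag in $U_t=V$ whose successive quotients have the right Jordan-type bounds, with $u_t$ lying in the appropriate step, so that Lemma~\ref{lem:enhcl-crit} applies. For surjectivity onto $\overline{\cO_{\mu;\nu}}$, given $(v,x)\in\overline{\cO_{\mu;\nu}}$ one uses Lemma~\ref{lem:enhcl-crit}(3) repeatedly to build the flag $0=U_0'\subset\cdots\subset U_t'=V$ with $\dim U_i'=r_0+\cdots+r_{i-1}$, then identifies each $U_i'$ with $U_i$, defines $A_i,B_i$ as the appropriate restrictions/corestrictions of $x$ (injections and the maps induced by $x$), and sets the $u_i$ by the prescribed formulas; the enhanced conditions $A_iu_i=u_{i+1}$, $B_iu_{i+1}=u_i$ hold by construction. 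This shows $\Phi$ factors through a bijective-on-points morphism $H\git\Lambda_{(r_i),I}\to\overline{\cO_{\mu;\nu}}$.

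For the isomorphism statement, the cleanest route is the inductive reduction. Peeling off $GL(U_t)$ is not the move — rather, one peels off $GL(U_1)$ (the smallest nonzero space) first: depending on whether $0\in I$ or $0\in\Ib$, the pair $(A_0,B_0)$ is zero and the first genuine data is $(A_1,B_1)$ together with $u_1$; quotienting by $GL(U_1)$ and applying the relevant part of Lemma~\ref{lem:fftit} and Proposition~\ref{prop:itq} (with $V=U_1$, $V'=U_2$, $r=r_1-r_0$, after discarding the trivial $U_0=0$) identifies $GL(U_1)\git(\text{first chunk})$ with a closed subvariety of $U_2\times\cN_{U_2}$ of the form $\overline{\cO_{\mu';\nu'}}$ where $(\mu';\nu')$ is $(\mu;\nu)$ with its first column removed. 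Splicing this back in presents $GL(U_1)\git\Lambda_{(r_i),I}$ as the enhanced quiver variety $\Lambda_{(r_1,\ldots,r_{t-1}),I'}$ for the truncated data, with its own $\Phi$ map to $U_t\times\cN_{U_t}$. Iterating $t-1$ times yields $H\git\Lambda_{(r_i),I}\isomto\overline{\cO_{\mu;\nu}}$. The base cases and the verification that the gluing is compatible with taking GIT quotients are where Lemma~\ref{lem:fftit} (first fundamental theorem, giving a genuine closed embedding of the quotient, not just a bijection) does the real work, using that we are over $\C$ and each $\cN_{U_i,U_{i+1}}$ is a closed $GL$-stable subvariety of a space of matrix pairs.

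The main obstacle I anticipate is making the inductive splicing precise at the level of coordinate rings: one must show that forming $GL(U_1)\git$ of the \emph{whole} variety $\Lambda_{(r_i),I}$ (which has the further maps $A_2,B_2,\ldots$ and vectors $u_3,\ldots$ entangled with the constraint $B_iA_i=A_{i-1}B_{i-1}$ at $i=1$ and the enhanced equation at index $0$ or $1$) really does produce the truncated enhanced quiver variety, i.e.\ that the only $GL(U_1)$-invariants are the ``expected'' ones ($A_1B_1$, and $u_2$ or $A_1u_1$) and that there are no new relations — this is exactly the content one has to extract from the first fundamental theorem in the form stated in Lemma~\ref{lem:fftit}, together with the description of images in Proposition~\ref{prop:itq}. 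A secondary subtlety is bookkeeping the ``$+1^r$'' shifts in Proposition~\ref{prop:itq} so that after $t-1$ steps the accumulated bipartition is exactly $(\mu;\nu)$ and not some conjugate or shift of it; checking this amounts to matching the recursion $r_i=\lambda^\bt_{t-i}$ and the definition of $I$ against the column-removal operation on $(\mu;\nu)$, which is routine but must be done carefully.
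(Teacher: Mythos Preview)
Your inductive strategy is the right idea --- the paper's proof also proceeds by induction on $t$, feeding the output of one step into Proposition~\ref{prop:itq} --- but you have the direction of peeling wrong, and the key intermediate claim is false. Quotienting $\Lambda_{(r_i),I}$ by $GL(U_1)$ does \emph{not} produce the enhanced quiver variety $\Lambda_{(r_1,\ldots,r_{t-1}),I'}$: already for $t=2$, $(r_0,r_1)=(1,2)$, $I=\varnothing$, the left-hand side is $\overline{\cO_{\varnothing;(2,1)}}$, which is four-dimensional, whereas $\Lambda_{(2),\varnothing}$ is a single point. Structurally, once you collapse the $U_1$ level, the leftmost constraint on the remaining data becomes ``$(u_2,B_2A_2)$ lies in a certain nontrivial orbit closure in $U_2\times\cN_{U_2}$'' rather than the original boundary condition $B_2A_2=0$ coming from $U_0=0$, so the resulting variety is not of the defined enhanced-quiver shape and your induction does not close. (Incidentally, $\dim U_2-\dim U_1=r_1$, not $r_1-r_0$.)

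The paper truncates from the other end. Set $\Lambda'=\Lambda_{(r_0,\ldots,r_{t-2}),\,I\cap\{0,\ldots,t-2\}}$, which \emph{is} a genuine enhanced quiver variety one step shorter, with its own map $\Phi'$ to $U_{t-1}\times\cN_{U_{t-1}}$ and associated bipartition $(\mu';\nu')$. Let $Y$ be the preimage of $\overline{\cO_{\mu';\nu'}}$ in the appropriate enhanced-pair space ($U_{t-1}\times\cN_{U_{t-1},U_t}$ if $t-1\in\Ib$, or $U_t\times\cN_{U_{t-1},U_t}$ if $t-1\in I$). Then the square with $\Lambda_{(r_i),I}\to Y$ over $\Lambda'\xrightarrow{\Phi'}\overline{\cO_{\mu';\nu'}}$ is cartesian, and since $H'=\prod_{i\le t-2}GL(U_i)$ acts only on the $\Lambda'$ factor, the inductive hypothesis $H'\git\Lambda'\isomto\overline{\cO_{\mu';\nu'}}$ yields $H'\git\Lambda_{(r_i),I}\isomto Y$ directly. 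A single application of Proposition~\ref{prop:itq} (quotienting $Y$ by $GL(U_{t-1})$) then finishes. With this orientation, both of your anticipated obstacles disappear: there is no ``splicing at the level of coordinate rings'' to verify, and the $+1^r$ bookkeeping reduces to the observation that removing the index $t-1$ deletes one column of $(\mu;\nu)$. Your separate verification that $\Phi$ lands in and surjects onto $\overline{\cO_{\mu;\nu}}$ is also unnecessary --- both the image identification and the quotient isomorphism fall out of the induction together.
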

\begin{proof}
We proceed by induction on $t$.  If $t = 1$, then $H$ is the trivial group.  If $I = \varnothing$, then $\Lambda_{(r_i),I}$ consists of the single point where $u_0=u_1=0$ and $A_0=B_0=0$, and $\cO_{\mu;\nu} = \cO_{\varnothing;1^{r_0}}=\{(0,0)\}$, so the result holds.  On the other hand, if $I = \{0\}$, then $\Lambda_{(r_i),I}$ consists of the tuples $(u_0,u_1,A_0,B_0)$ where $u_0=0$, $A_0=B_0=0$, and $u_1$ is arbitrary, and $\overline{\cO_{\mu;\nu}} = \overline{\cO_{1^{r_0};\varnothing}}=U_1\times\{0\}$, so the result holds in this case as well.

Now, suppose $t > 1$, and let us put
\[
\Lambda' = \Lambda_{(r_0,r_1,\ldots,r_{t-2}), I \cap \{0,\ldots,t-2\}}.
\]
This is an enhanced quiver variety associated to a bipartition $(\mu';\nu')$ of total size $r_0 + \cdots + r_{t-2}$.  Let $H' = GL(U_0) \times \cdots \times GL(U_{t-2})$.  Then, by assumption, the map
\[
\Phi':\Lambda'\to U_{t-1}\times\cN_{U_{t-1}}:(u_i,A_i,B_i)\mapsto(u_{t-1},A_{t-2}B_{t-2})
\]
has image $\overline{\cO_{\mu';\nu'}}$ and induces an isomorphism
\[
H' \git \Lambda' \isomto \overline{\cO_{\mu';\nu'}}.
\]
Consider the variety
\[
Y =
\begin{cases}
(p_{U_{t-1}}^{U_{t-1},U_t})^{-1}(\overline{\cO_{\mu';\nu'}})\subset U_{t-1} \times \cN_{U_{t-1},U_t} & \text{if $t-1 \in \Ib$,} \\
(p_{U_{t-1}}^{U_{t},U_{t-1}})^{-1}(\overline{\cO_{\mu';\nu'}})\subset U_t \times \cN_{U_{t-1},U_t} & \text{if $t-1 \in I$.}
\end{cases}
\]
For simplicity, we omit the superscripts on the maps $p_{U_{t-1}}$ and $p_{U_t}$ which distinguish between the two cases. By Proposition~\ref{prop:itq}, we know in both cases that $p_{U_t}$ induces an isomorphism 
\[ 
GL(U_{t-1}) \git Y \isomto \overline{\cO_{\mu;\nu}}.
\]  
In the commutative diagram
\[
\xymatrix{
\Lambda_{(r_i),I} \ar[r]^\varphi \ar[d] & Y \ar[d]^{p_{U_{t-1}}}\ar[r]^{p_{U_t}} & \overline{\cO_{\mu;\nu}} \\
\Lambda' \ar[r]^{\Phi'} & \overline{\cO_{\mu';\nu'}} }
\]
the square on the left is cartesian, so $\varphi$ induces an isomorphism $H' \git \Lambda_{(r_i),I}\isomto Y$.  Since $\Phi=p_{U_t}\circ \varphi$, the result follows.
\end{proof}

\section{The Singular Locus of $\Lambda_{(r_i),I}$}
\label{sect:singloc}

We retain all the notation introduced in Section~\ref{sect:eqv}.  Let $\Lambda^\circ \subset \Lambda_{(r_i),I}$ be the open subset consisting of points $(u_i,A_i,B_i)$ such that:
\begin{equation}
\label{eq:lambda-zero}
\begin{cases}
\text{either 
$A_{j}$ is injective and $u_{j+1} \notin \im(A_j)$ or $B_{j}$ is surjective,} &\text{for all }j\in I, \\
\text{either 
$A_{j}$ is injective or $B_{j}$ is surjective,} &\text{for all }j\in \Ib.
\end{cases} 
\end{equation}
It is easy to see that $\Lambda^\circ$ is nonempty.


In this section, we prove that $\Lambda^\circ$ is nonsingular, and we use this to reframe Conjecture~\ref{conj:lambdanormal} as a dimension calculation.

Define a morphism of affine varieties
\[ \Psi:\prod_{i=0}^t U_i\times
\prod_{i=0}^{t-1}\left(\Hom(U_i,U_{i+1})\times\Hom(U_{i+1},U_i)
\right)
\to
\prod_{i\in I}U_{i}\times\prod_{i\in \Ib}U_{i+1}\times\prod_{i=1}^{t-1}\End(U_i) \]
by the rule
\[
\Psi(u_i,A_i,B_i)=
(B_{i}u_{i+1}-u_{i},A_{i}u_{i}-u_{i+1},B_iA_i-A_{i-1}B_{i-1}).
\]
Then $\Lambda_{(r_i),I}$ is the variety-theoretic zero fibre of $\Psi$.
Let $\Psi^{-1}(0)$ denote the scheme-theoretic zero fibre; in other words,
the spectrum of the quotient of the free polynomial ring in indeterminates
identified with the coordinates of the $u_i$'s, $A_i$'s and $B_i$'s by the
ideal generated by the coordinates of the appropriate vectors $B_{i}u_{i+1}-u_{i}$
and $A_{i}u_{i}-u_{i+1}$
and the matrices $B_iA_i-A_{i-1}B_{i-1}$. \textit{A priori}, 
$\Psi^{-1}(0)$ is a possibly
non-reduced scheme, whose reduced subscheme is the variety $\Lambda_{(r_i),I}$.

\begin{prop}\label{prop:nonsingloc}
$\Lambda^\circ$ is nonsingular of dimension $d_{(r_i),I}$.
\end{prop}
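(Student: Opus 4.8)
The plan is to show that $\Lambda^\circ$ is smooth by computing the differential $d\Psi$ at each point of $\Lambda^\circ$ and verifying that it is surjective there; since $\Lambda_{(r_i),I}=\Psi^{-1}(0)_{\mathrm{red}}$ and $\Psi$ maps into an affine space of dimension equal to $d_{(r_i)}^{\mathrm{source}}-d_{(r_i),I}$ (where $d_{(r_i)}^{\mathrm{source}}$ is the dimension of the source of $\Psi$), surjectivity of $d\Psi$ on $\Lambda^\circ$ simultaneously proves that $\Psi^{-1}(0)$ is reduced and smooth of the expected dimension $d_{(r_i),I}$ near $\Lambda^\circ$, hence that $\Lambda^\circ$ is nonsingular of that dimension. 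So the entire proposition reduces to a surjectivity statement for a linear map at each point of the open set defined by \eqref{eq:lambda-zero}.

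First I would set up coordinates: write a point of $\Lambda^\circ$ as $(u_i,A_i,B_i)$, and a tangent vector as $(\dot u_i,\dot A_i,\dot B_i)$. Differentiating $\Psi$, the image of a tangent vector has components $\dot B_i u_{i+1}+B_i\dot u_{i+1}-\dot u_i$ (for $i\in I$), $\dot A_i u_i+A_i\dot u_i-\dot u_{i+1}$ (for $i\in\Ib$), and $\dot B_i A_i+B_i\dot A_i-\dot A_{i-1}B_{i-1}-A_{i-1}\dot B_{i-1}$ (in $\End(U_i)$, for $1\le i\le t-1$). I want to show that for any prescribed target tuple $(c_i)_{i\in I}, (c'_i)_{i\in\Ib}, (E_i)_{1\le i\le t-1}$, we can solve for the $(\dot u_i,\dot A_i,\dot B_i)$. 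The key leverage is the hypothesis \eqref{eq:lambda-zero}: at each index $j\in\Ib$, either $A_j$ is injective or $B_j$ is surjective, and at each $j\in I$, either $A_j$ is injective with $u_{j+1}\notin\im A_j$, or $B_j$ is surjective. This is exactly the condition that makes a single ``enhanced nilpotent pair block'' at level $j$ controllable — it is the infinitesimal version of the smoothness input behind Proposition~\ref{prop:itq}. I would proceed index by index, as in the inductive structure of Theorem~\ref{thm:gitquot}: fix the $\End(U_i)$-component equations working up from $i=1$, using that the relevant $A$ or $B$ at each stage is injective or surjective to split off a complement and absorb $E_i$ into $\dot B_i$ or $\dot A_i$; then handle the vector equations, using that in the $I$-case the stronger condition $u_{j+1}\notin\im(A_j)$ guarantees $A_j\dot u_j-\dot u_{j+1}$ (resp.\ $\dot A_j u_j$) can hit any prescribed $c'_j$ while leaving earlier choices intact.

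The main obstacle I expect is bookkeeping the interaction between the matrix equations and the vector equations so that the choices made at one level do not conflict with those at adjacent levels — the $\dot A_{i-1},\dot B_{i-1}$ appearing in the level-$i$ matrix equation are partly constrained already by the level-$(i-1)$ equation, and the $\dot u_i$'s are coupled across levels by the vector equations. The clean way to manage this is to choose, at each $j$, a vector-space splitting $U_{j+1}=\im(A_j)\oplus C_j$ (when $A_j$ is injective) or a splitting of $B_j$ via a section $s_j\colon U_j\to U_{j+1}$ (when $B_j$ is surjective), and then use these splittings to make the solution explicit: the free parameters living in $\Hom$-spaces mapping into or out of the complements $C_j$ (or $\ker B_j$) give exactly enough room to match the target. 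One must check that the same splitting simultaneously handles the matrix equation $E_j$ at level $j$ and $j+1$ and the vector equation $c'_j$ or $c_j$; this is where I anticipate the argument being slightly delicate but ultimately routine, parallel to the proof of Lemma~\ref{lem:filt-trans} where a single choice of subspace was engineered to satisfy several constraints at once. Once surjectivity of $d\Psi$ on $\Lambda^\circ$ is established, a standard Jacobian-criterion argument (the fibre of a morphism to affine $N$-space where the differential is everywhere surjective of corank $N$ is smooth, reduced, and of dimension $\dim(\text{source})-N$) closes out the proof.
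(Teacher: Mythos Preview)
Your strategy is the same as the paper's: prove that $d\Psi$ is surjective at every point of $\Lambda^\circ$, whence the Jacobian criterion gives smoothness and the correct dimension. The organisational device differs, however, and the paper's is cleaner. Rather than solving the $\End(U_i)$-equations first and the vector equations afterwards (which, as you anticipate, creates coupling headaches), the paper introduces filtrations $X_t\subset\cdots\subset X_0$ of the source and $Y_t\subset\cdots\subset Y_0$ of the target, with $X_j$ spanned by $(u'_{j+1},\ldots,u'_t)$ together with all $(A'_i,B'_i)$ for $i\ge j$, and $Y_j$ similarly. Passing to the associated graded kills the cross-terms $A'_{i-1}B_{i-1}+A_{i-1}B'_{i-1}$, so surjectivity reduces to $t$ independent checks, one per level. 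At level $j$ the induced map sends $(u'_{j+1},A'_j,B'_j)$ either to $(B'_j u_{j+1}+B_j u'_{j+1},\,B'_j A_j+B_j A'_j)$ (if $j\in I$) or to $(A'_j u_j - u'_{j+1},\,B'_j A_j+B_j A'_j)$ (if $j\in\Ib$), and each is surjective under exactly the hypothesis in~\eqref{eq:lambda-zero}. In particular, the role of the condition $u_{j+1}\notin\im A_j$ for $j\in I$ is that the augmented map $[A_j\mid u_{j+1}]:U_j\oplus\C\to U_{j+1}$ is injective, so $B'_j\mapsto(B'_j u_{j+1},B'_j A_j)$ is surjective onto $U_j\oplus\End(U_j)$; you have this backwards in your sketch (you attach the condition to the $\Ib$-equation and the target $c'_j$, but it governs the $I$-equation with target $c_j\in U_j$). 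The filtration device both fixes this and dissolves the bookkeeping you were worried about.
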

\begin{proof}
To prove the proposition, it suffices to show that for fixed $(u_i,A_i,B_i)\in \Lambda^\circ$,
the differential 
\[ 
\begin{split}
d\Psi_{(u_i,A_i,B_i)}:\bigoplus_{i=0}^t U_i\oplus\bigoplus_{i=0}^{t-1}
&\left(\Hom(U_i,U_{i+1})\oplus\Hom(U_{i+1},U_i)\right)\\
&\to
\bigoplus_{i\in I}U_{i}\oplus\bigoplus_{i\in \Ib}U_{i+1}
\oplus\bigoplus_{i=1}^{t-1}\End(U_i) 
\end{split}
\]
is surjective. This differential maps $(u_i',A_i',B_i')$ to
\[ 
(B_{i}'u_{i+1}+B_{i}u_{i+1}'-u_{i}',A_{i}'u_{i}+A_{i}u_{i}'-u_{i+1}',
B_i'A_i+B_iA_i'-A_{i-1}'B_{i-1}-A_{i-1}B_{i-1}').
\]
The proof of surjectivity is a slight elaboration of the proof of
\cite[Proposition 3.5]{kp}. We introduce filtrations of the domain and codomain:
\[
\begin{split}
&0=X_{t}\subset X_{t-1}\subset\cdots\subset X_0=\bigoplus_{i=0}^t U_i\oplus
\bigoplus_{i=0}^{t-1}\left(\Hom(U_i,U_{i+1})\oplus\Hom(U_{i+1},U_i)\right),\\
&\qquad\text{ where }X_j=\bigoplus_{i=j+1}^t U_i\oplus
\bigoplus_{i=j}^{t-1}\left(\Hom(U_i,U_{i+1})\oplus\Hom(U_{i+1},U_i)\right)\\
&\text{and }0=Y_{t}\subset Y_{t-1}\subset\cdots\subset Y_0=
\bigoplus_{i\in I}U_{i}\oplus\bigoplus_{i\in \Ib}U_{i+1}
\oplus\bigoplus_{i=1}^{t-1}\End(U_i),\\
&\qquad\text{ where }Y_j=
\bigoplus_{\substack{i\in I\\i\geq j}}U_{i}\oplus
\bigoplus_{\substack{i\in \Ib\\i\geq j}}U_{i+1}
\oplus
\bigoplus_{i=j}^{t-1}\End(U_i).
\end{split}
\]
It is immediate from the above formula that $d\Psi_{(u_i,A_i,B_i)}(X_j)\subset Y_j$
for all $j$, so it suffices to show that the induced map
$\psi_j:X_j/X_{j+1}\to Y_j/Y_{j+1}$ is surjective for all $0\leq j\leq t-1$.

If $j\in I$, then $\psi_j$ can be identified with the map
\[ 
\begin{split}
U_{j+1}\oplus\Hom(U_{j},U_{j+1})&\oplus\Hom(U_{j+1},U_{j})\to U_{j}\oplus\End(U_{j})\\
(u_{j+1}',A_{j}',B_{j}')&\mapsto
(B_{j}'u_{j+1}+B_{j}u_{j+1}',B_{j}'A_{j}+B_{j}A_{j}').
\end{split}
\]
This is surjective because, by the assumption that $(u_i,A_i,B_i)\in \Lambda^\circ$, 
either $B_{j}$ is surjective (allowing any image 
to be obtained by varying $u_{j+1}'$ and $A_{j}'$, with $B_{j}'$ set to zero)
or the matrix formed by adding
$u_{j+1}$ as an extra column to $A_{j}$ has full rank (allowing any image to be
obtained by varying $B_{j}'$, with $u_{j+1}'$ and $A_{j}'$ set to zero).

If $j\in \Ib$, then $\psi_j$ can be identified with the map
\[ 
\begin{split}
U_{j+1}\oplus\Hom(U_{j},U_{j+1})&\oplus\Hom(U_{j+1},U_{j})\to U_{j+1}
\oplus\End(U_{j})\\
(u_{j+1}',A_{j}',B_{j}')&\mapsto
(A_{j}'u_{j}-u_{j+1}',B_{j}'A_{j}+B_{j}A_{j}').
\end{split}
\]
This is surjective because
either $B_{j}$ is surjective (allowing any image 
to be obtained by varying $u_{j+1}'$ and $A_{j}'$, with $B_{j}'$ set to zero)
or $A_{j}$ is injective (allowing any image to be
obtained by varying $u_{j+1}'$ and $B_{j}'$, with $A_{j}'$ set to zero).  
\end{proof}

We end this section by showing how to reduce Conjecture~\ref{conj:lambdanormal} to the following dimension bound.

\begin{conj}\label{conj:dimbound}
We have $\dim (\Lambda_{(r_i),I} \smallsetminus \Lambda^\circ) \le d_{(r_i),I} - 2$.
\end{conj}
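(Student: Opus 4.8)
The plan is to decompose $\Lambda_{(r_i),I}\smallsetminus\Lambda^\circ$ into the loci where the individual clauses of~\eqref{eq:lambda-zero} fail and to bound each by induction on $t$, along the lines of Kraft--Procesi's proof of Theorem~\ref{thm:kp}. For $0\le j\le t-1$ let $\Sigma_j\subseteq\Lambda_{(r_i),I}$ be the closed subvariety on which the $j$-th clause of~\eqref{eq:lambda-zero} fails: for $j\in\Ib$ this is the locus where $A_j$ is not injective and $B_j$ is not surjective, and for $j\in I$ it is the locus where $B_j$ is not surjective and, in addition, either $A_j$ is not injective or $u_{j+1}\in\im(A_j)$. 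Since $\Lambda_{(r_i),I}\smallsetminus\Lambda^\circ=\bigcup_j\Sigma_j$, it is enough to prove $\dim\Sigma_j\le d_{(r_i),I}-2$ for each $j$. It will be convenient to record the identity $d_{(r_i),I}=\dim H+\dim\cO_{\mu;\nu}$, where $H=GL(U_0)\times\cdots\times GL(U_{t-1})$; this follows routinely from~\eqref{eqn:dim-norb} and the definition of $n(\mu+\nu)$.

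Now induct on $t$. Write $\Lambda'=\Lambda_{(r_0,\dots,r_{t-2}),\,I\cap\{0,\dots,t-2\}}$, with associated bipartition $(\mu';\nu')$ and expected dimension $d'$, and let $\theta\colon\Lambda_{(r_i),I}\to\Lambda'$ be the forgetful map. For $j\le t-2$ the $j$-th clause of~\eqref{eq:lambda-zero} involves only coordinates that $\theta$ preserves, so $\Sigma_j=\theta^{-1}(\Sigma'_j)$ for the corresponding locus $\Sigma'_j\subseteq\Lambda'$. Because the left square of the diagram in the proof of Theorem~\ref{thm:gitquot} is cartesian, a fibre of $\theta$ over $z'$ is isomorphic to the fibre of $p_{U_{t-1}}\colon Y\to\overline{\cO_{\mu';\nu'}}$ over $\Phi'(z')$ (notation as there). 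So I would stratify $\Sigma'_j$ by the $GL(U_{t-1})$-orbit $\cO_{\rho';\sigma'}$ containing $\Phi'(z')$, bound the dimension of each stratum by the inductive hypothesis together with Lemma~\ref{lem:dim-norb}, and bound the $\theta$-fibre over it by using Proposition~\ref{prop:itq} to describe $Y$ and Lemma~\ref{lem:enpdim} to bound the dimensions of the enhanced-nilpotent-pair orbits that stratify $Y$. Since $d_{(r_i),I}-d'$ is exactly the generic fibre dimension of $\theta$, what this step ultimately requires is that, stratum by stratum, the increase of the fibre dimension over the non-generic $(\rho';\sigma')$-strata is compensated by the drop of the stratum dimension below $d'-2$.

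The index $j=t-1$ needs genuinely new analysis, since $\Sigma_{t-1}$ is not pulled back from $\Lambda'$. Here I would stratify $\Lambda_{(r_i),I}$ directly by the type $(\rho;\sigma)\in\cQ_n$ of $\Phi(z)=(u_t,A_{t-1}B_{t-1})$. On the open stratum $\Phi^{-1}(\cO_{\mu;\nu})$, failure of the $(t-1)$-clause is a determinantal condition --- rank drop of $A_{t-1}$ or of $B_{t-1}$, or the incidence $u_t\in\im(A_{t-1})$ --- and one checks, using $r_{t-1}\ge r_{t-2}$, that it has codimension at least $2$ (it is exactly here, as Examples~\ref{exam:tequals2} and~\ref{exam:lambdanormal} show, that non-strict versus strict inequality among the $r_i$ would make a difference). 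On each deeper stratum $\Phi^{-1}(\cO_{\rho;\sigma})$ with $(\rho;\sigma)<(\mu;\nu)$, one bounds its dimension by $\dim\cO_{\rho;\sigma}$ plus the fibre dimension, controlling the drop $\dim\cO_{\mu;\nu}-\dim\cO_{\rho;\sigma}$ by Lemma~\ref{lem:dim-norb} and Remark~\ref{rmk:lem-inequality} and, again through the cartesian square, controlling the fibre dimension by Lemma~\ref{lem:enpdim}.

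The hard part is the uniform fibre-dimension control that both of the previous steps rely on. Generically $\theta$, $p_{U_{t-1}}$ and $\Phi$ are fibre bundles with transparent fibres, but over the non-generic strata the fibre dimension can jump, and one needs each such jump to be outweighed by the accompanying drop in the base-stratum dimension, and by a further margin of $2$. This is precisely a combinatorial dimension estimate for the strata of $\Lambda_{(r_i),I}$, which are indexed by certain sequences of bipartitions and signed quasibipartitions; it is the content of Conjecture~\ref{conj:strata}, which at present we can verify only in restricted cases. Consequently the argument above gives the bound unconditionally only when the combinatorics collapses: when $I=\{0,\dots,s-1\}$ is an initial segment one has $\Lambda_{(r_i),I}\cong U_s\times\Lambda_{(r_i)}$ and the estimate follows from Theorem~\ref{thm:kp} (this is Theorem~\ref{thm:easy}), and more generally in the range covered by Theorem~\ref{thm:main}. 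The clause ``$u_{j+1}\in\im(A_j)$'' for $j\in I$, which has no counterpart in~\cite{kp}, is the principal new source of difficulty.
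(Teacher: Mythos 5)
The statement you were asked to prove is Conjecture~\ref{conj:dimbound} of the paper, and the authors themselves do not prove it in general: they only reduce it to the combinatorial Conjecture~\ref{conj:strata} via the stratification of Section~\ref{sect:stratdim}, and establish that conjecture in the special cases of Theorems~\ref{thm:easy} and~\ref{thm:main} plus computer checks for small $n$. Your proposal ends in the same place, and you are candid about it. Moreover, your route is essentially the paper's argument unrolled: your decomposition of $\Lambda_{(r_i),I}\smallsetminus\Lambda^\circ$ into the loci $\Sigma_j$, the induction on $t$ through the forgetful map $\theta$, and the fibre control via the cartesian square in the proof of Theorem~\ref{thm:gitquot} together with Lemmas~\ref{lem:dim-norb} and~\ref{lem:enpdim} correspond to the paper's single stratification by sequences of signed quasibipartitions, whose per-stratum bound (Proposition~\ref{prop:dimbound}) is obtained exactly by telescoping Lemma~\ref{lem:enpdim} over $0\le i\le t-1$ --- your induction step --- and whose interaction with $\Lambda^\circ$ is recorded stratum-by-stratum in Lemma~\ref{lem:signed-diagram-interpretation}. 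The residual requirement you isolate (that fibre-dimension jumps over non-generic strata are outweighed by the drop in base-stratum dimension, with a margin of $2$) is precisely Conjecture~\ref{conj:strata}(2),(3). So there is no wrong turn, but also no proof beyond what the paper has: the bound is obtained only in the cases the paper already covers.

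One caution about the single place where you claim an unconditional check: you assert that on the open stratum $\Phi^{-1}(\cO_{\mu;\nu})$ the failure locus of the $(t-1)$-clause is a determinantal condition of codimension at least $2$ in $\Lambda_{(r_i),I}$, ``using $r_{t-1}\ge r_{t-2}$''. Codimension inside $\Lambda_{(r_i),I}$ cannot be read off from the number of rank conditions imposed, because the irreducibility and dimension of $\Lambda_{(r_i),I}$ (that it is a complete intersection of dimension $d_{(r_i),I}$) is exactly Conjecture~\ref{conj:lambdanormal}, which sits downstream of the statement you are proving; the paper avoids this circularity by always bounding stratum dimensions from above by the absolute quantity $d_{(r_i),I}$ (Proposition~\ref{prop:dimbound}, Corollary~\ref{cor:dimbound}) rather than comparing with $\dim\Lambda_{(r_i),I}$. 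If you recast your codimension claims in that absolute form, your outline coincides with the paper's reduction; as written, that one step is not justified, and in any case the heavy lifting remains the unproven combinatorial estimate.
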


\begin{thm}\label{thm:nonsingrel}
If $\dim (\Lambda_{(r_i),I} \smallsetminus \Lambda^\circ) \le d_{(r_i),I} - 2$, then $\Psi^{-1}(0) \cong \Lambda_{(r_i),I}$, and this variety is a normal complete intersection of dimension $d_{(r_i),I}$.
\end{thm}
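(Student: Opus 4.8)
The plan is to run the standard "complete intersection via codimension bound" argument, exactly in the spirit of \cite[Proposition~3.6]{kp}. Recall that $\Psi^{-1}(0)$ is cut out in the affine space $E := \prod_{i=0}^t U_i\times\prod_{i=0}^{t-1}\bigl(\Hom(U_i,U_{i+1})\times\Hom(U_{i+1},U_i)\bigr)$ by $\dim E - d_{(r_i),I}$ equations (the coordinates of the target of $\Psi$), so \emph{every} irreducible component of $\Psi^{-1}(0)$ has dimension at least $d_{(r_i),I}$. On the other hand, $\Psi^{-1}(0)$ is set-theoretically $\Lambda_{(r_i),I}$, whose smooth locus $\Lambda^\circ$ has dimension exactly $d_{(r_i),I}$ by Proposition~\ref{prop:nonsingloc}, and whose complement has dimension $\le d_{(r_i),I}-2$ by hypothesis. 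First I would combine these facts to conclude that $\Lambda_{(r_i),I}$ is irreducible of dimension $d_{(r_i),I}$: any component meeting $\Lambda^\circ$ is the closure of $\Lambda^\circ$ and has dimension $d_{(r_i),I}$, while any component disjoint from $\Lambda^\circ$ would lie in $\Lambda_{(r_i),I}\smallsetminus\Lambda^\circ$ and hence have dimension $\le d_{(r_i),I}-2 < d_{(r_i),I}$, contradicting the lower bound just noted; so $\Lambda^\circ$ is dense and $\overline{\Lambda^\circ}=\Lambda_{(r_i),I}$ is the unique component, of the expected dimension.

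Next I would upgrade this from the reduced variety to the scheme $\Psi^{-1}(0)$. Since $\Psi^{-1}(0)$ is cut out by $\dim E - d_{(r_i),I}$ equations and has a component of dimension $d_{(r_i),I}$ (the one just found, which is all of $(\Lambda_{(r_i),I})_{\mathrm{red}}$ set-theoretically), and since no component can have dimension below $d_{(r_i),I}$, the scheme $\Psi^{-1}(0)$ is equidimensional of dimension $d_{(r_i),I}$, hence a complete intersection, hence Cohen--Macaulay (a complete intersection in a smooth ambient space is Cohen--Macaulay, so it satisfies Serre's condition $(S_2)$, and in particular has no embedded components). Because it has no embedded components and is generically reduced — it is reduced along the dense open $\Lambda^\circ$, on which by Proposition~\ref{prop:nonsingloc} $\Psi$ is a submersion so the scheme is smooth there — Serre's criterion (more precisely, $(R_0)+(S_1)$, both of which hold) forces $\Psi^{-1}(0)$ to be reduced. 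Therefore $\Psi^{-1}(0)\cong\Lambda_{(r_i),I}$ as schemes, and this scheme is a complete intersection of dimension $d_{(r_i),I}$.

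Finally, normality follows from Serre's criterion $(R_1)+(S_2)$. The condition $(S_2)$ holds because $\Lambda_{(r_i),I}=\Psi^{-1}(0)$ is a complete intersection, hence Cohen--Macaulay. For $(R_1)$: the singular locus of $\Lambda_{(r_i),I}$ is contained in $\Lambda_{(r_i),I}\smallsetminus\Lambda^\circ$ by Proposition~\ref{prop:nonsingloc}, and the hypothesis gives $\dim(\Lambda_{(r_i),I}\smallsetminus\Lambda^\circ)\le d_{(r_i),I}-2$, so the singular locus has codimension at least $2$; thus $\Lambda_{(r_i),I}$ is regular in codimension $1$. By Serre's criterion $\Lambda_{(r_i),I}$ is normal, completing the proof.

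I expect no serious obstacle here: the entire argument is a bookkeeping exercise with the codimension bound, the smoothness of $\Lambda^\circ$, and Serre's criteria. The only point requiring mild care is the passage from the reduced variety to the scheme $\Psi^{-1}(0)$ — i.e., verifying that the set-theoretic equality $\Psi^{-1}(0)_{\mathrm{red}}=\Lambda_{(r_i),I}$ together with the dimension count rules out nilpotents and embedded components — and this is handled uniformly by the Cohen--Macaulay property of a complete intersection together with generic reducedness, exactly as in \cite[Proposition~3.6]{kp}.
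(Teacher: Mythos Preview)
Your argument is essentially the paper's argument --- complete intersection plus Serre's criterion --- and the steps for equidimensionality, Cohen--Macaulayness, reducedness, and normality are all correct. There is one genuine gap, however: your deduction of irreducibility. You assert that ``any component meeting $\Lambda^\circ$ is the closure of $\Lambda^\circ$,'' but this presupposes that $\Lambda^\circ$ is connected (equivalently, irreducible, since it is smooth). Nothing you have said rules out $\Lambda^\circ$ having several connected components, each the generic locus of a separate irreducible component of $\Lambda_{(r_i),I}$; the dimension bound on $\Lambda_{(r_i),I}\smallsetminus\Lambda^\circ$ only excludes components \emph{disjoint} from $\Lambda^\circ$.

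The paper plugs this hole by observing that $\Psi^{-1}(0)$ is a cone for a weighted $\mathbb{G}_m$-action (with all weights positive), hence connected; then, once normality is established via $(R_1)+(S_2)$ as you do, connectedness forces irreducibility. Note that your remaining steps (equidimensional $\Rightarrow$ complete intersection $\Rightarrow$ Cohen--Macaulay $\Rightarrow$ no embedded components; generically reduced on $\Lambda^\circ$ $\Rightarrow$ reduced; then $(R_1)+(S_2)$ $\Rightarrow$ normal) do not actually use irreducibility, so you can simply postpone the irreducibility claim to the end and supply it via the cone argument plus normality, exactly as the paper does.
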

\begin{proof}
If we let $f(i)$ be the positive integer 
$t+1+i-2|I\cap\{0,\cdots,i-1\}|$ for $0\leq i\leq t$, then
\[
\begin{split}
\Psi(\lambda^{f(i)}u_i,&\lambda A_i,\lambda B_i)\\
&=(\lambda^{f(i)}(B_{i}u_{i+1}-u_{i}),\lambda^{f(i+1)}(A_{i}u_{i}-u_{i+1}),
\lambda^2(B_iA_i-A_{i-1}B_{i-1})).
\end{split}
\]
Hence $\Psi^{-1}(0)$ is connected, because it is a cone over a subscheme
of weighted projective space (with all weights positive).

From Proposition~\ref{prop:nonsingloc} and the assumption on $\dim (\Lambda_{(r_i),I} \smallsetminus \Lambda^\circ)$, we see that the scheme $\Psi^{-1}(0)$ is a connected complete intersection which is regular in
codimension $1$.  By Serre's criterion, $\Psi^{-1}(0)$ is reduced, irreducible and
normal; it therefore coincides with the variety $\Lambda_{(r_i),I}$, as desired.
\end{proof}

\section{Stratifications and Dimension Estimates}
\label{sect:stratdim}

In this section, we endow $\Lambda_{(r_i),I}$ with a stratification, and we estimate the dimension of each stratum.  This dimension estimate enables us to reduce Conjecture~\ref{conj:lambdanormal} to a purely combinatorial statement about sequences of signed quasibipartitions.

For $0 \le j \le t-1$, let us define
\[
\hat \cN_j =
\begin{cases}
U_j \times \cN_{U_j,U_{j+1}} & \text{if $j \in \Ib$,} \\
U_{j+1} \times \cN_{U_j,U_{j+1}} & \text{if $j \in I$,}
\end{cases}
\quad\text{and}\quad
\hat\cQ_j =
\begin{cases}
\SQ_{\dim U_j, \dim U_{j+1}} & \text{if $j \in \Ib$,} \\
\SQ_{\dim U_{j+1}, \dim U_j} & \text{if $j \in I$.}
\end{cases}
\]
Recall from Lemma~\ref{lem:johnson} that the $(GL(U_j) \times GL(U_{j+1}))$-orbits in $\hat \cN_j$ are parametrized by the set of signed quasibipartitions $\hat\cQ_j$.  There is an obvious map $h_j: \Lambda_{(r_i),I} \to \hat \cN_j$ which forgets all but the relevant vector and nilpotent pair, and we define a map
\[
\Theta: \Lambda_{(r_i),I} \to \prod_{j=0}^{t-1} \hat\cQ_j
\]
by associating to each point $(u_i,A_i,B_i) \in \Lambda_{(r_i),I}$ the sequence of signed quasibipartitions labelling the $(GL(U_j) \times GL(U_{j+1}))$-orbit of $h_j(u_i,A_i,B_i)$, for each $j$. Let $\Xi$ be the image of this map.  We endow $\Lambda_{(r_i),I}$ with a stratification indexed by $\Xi$ by taking the strata to be the fibres of the map above:
\[
\Lambda_{(r_i),I}^\xi = \Theta^{-1}(\xi),\text{ for any }\xi=(\xi_j)\in\Xi.
\]
It is clear that each $\Lambda_{(r_i),I}^\xi$ is a locally closed subvariety of $\Lambda_{(r_i),I}$ whose boundary is a union of
smaller such strata.  The strata are clearly preserved by the action of 
$\prod_{i=0}^t GL(U_i)$.

Each $\xi=(\xi_j)\in \Xi$ determines a sequence of bipartitions $(\rho_\xi^{(j)};\sigma_\xi^{(j)})$ for $1 \le j \le t$, by the following condition:
\[
(u_j, A_{j-1}B_{j-1}) \in \cO_{\rho_\xi^{(j)}; \sigma_\xi^{(j)}} \subset U_j \times \cN_{U_j}
\qquad
\text{for all $(u_i,A_i,B_i) \in \Lambda_{(r_i),I}^\xi$.}
\]
We also set $(\rho_\xi^{(0)};\sigma_\xi^{(0)}) = (\varnothing;\varnothing)$. Thus, for each $0\leq j\leq t-1$, $(\rho_\xi^{(j)};\sigma_\xi^{(j)})$ and $(\rho_\xi^{(j+1)};\sigma_\xi^{(j+1)})$ are the bipartitions subordinate to the signed quasibipartition $\xi_j$, with `$+$' corresponding to $U_j$ if $j\in\Ib$ and to $U_{j+1}$ if $j\in I$. The condition for $\xi$ to lie in $\Xi$ is exactly that the subordinate bipartitions of adjacent $\xi_j$'s match up in this way.

\begin{exam} \label{exam:strata}
Continue with Example~\ref{exam:112}. A stratum of $\Lambda_{(1,1,2),\{0,2\}}$ is indexed by a sequence of signed quasibipartitions $\xi=(\xi_0,\xi_1,\xi_2)$, where $\xi_0\in\SQ_{1,0}$, $\xi_1\in\SQ_{1,2}$, and $\xi_2\in\SQ_{4,2}$. The compatibility conditions these must satisfy are that the `$+$' subordinate bipartition of $\xi_0$ equals that of $\xi_1$ (this is $(\rho_\xi^{(1)};\sigma_\xi^{(1)})$), and that the `$-$' subordinate bipartition of $\xi_1$ equals that of $\xi_2$ (this is $(\rho_\xi^{(2)};\sigma_\xi^{(2)})$).
\end{exam}
 
\begin{prop}\label{prop:dimbound}
Each $\Lambda_{(r_i),I}^\xi$ is a smooth variety.  Moreover, we have
\begin{equation}\label{eqn:dim-stratum}
\dim \Lambda_{(r_i),I}^\xi \le d_{(r_i)} + n(\lambda) - n(\rho_\xi^{(t)}+\sigma_\xi^{(t)})
- \sum_{\substack{i \in I \\ i-1 \in \Ib}} |\rho_\xi^{(i)}| +
\sum_{\substack{i \in \Ib \cup \{t\} \\ i-1 \in I}} |\rho_\xi^{(i)}|.
\end{equation}
\end{prop}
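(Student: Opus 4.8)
The plan is to build up the dimension estimate one link of the quiver at a time, fibering over the earlier maps. Concretely, I would argue by induction on $t$. For the smoothness claim, the stratum $\Lambda_{(r_i),I}^\xi$ fibers over a product of the orbits $\cC_{\xi_j}\subset\hat\cN_j$ under the maps $h_j$; each $h_j$ is $\left(\prod GL(U_i)\right)$-equivariant, and the fibered structure — together with the fact that each orbit $\cC_{\xi_j}$ is a smooth homogeneous space — shows that $\Lambda_{(r_i),I}^\xi$ is, locally, a fiber bundle over a smooth base with homogeneous fibers, hence smooth. (One has to check that the successive fiber maps are smooth morphisms; this is where the openness implicit in the orbit-stratum picture is used.)

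For the dimension bound, I would set $\Lambda' = \Lambda_{(r_0,\dots,r_{t-2}),\,I\cap\{0,\dots,t-2\}}$ and let $\xi' = (\xi_0,\dots,\xi_{t-2})$ be the truncated sequence, which lies in the corresponding $\Xi'$. The key geometric input is the cartesian square appearing in the proof of Theorem~\ref{thm:gitquot}: $\Lambda_{(r_i),I}$ maps to $\Lambda'$ with the fiber over a point being governed by $Y$, i.e.\ by $(p_{U_{t-1}})^{-1}$ of the relevant orbit closure inside $\hat\cN_{t-1}$. Restricting to strata, $\Lambda_{(r_i),I}^\xi$ maps onto $\Lambda'^{\,\xi'}$ with fibers that are (up to the $GL(U_{t-1})$-action) modeled on the orbit $\cC_{\xi_{t-1}}\subset\hat\cN_{t-1}$ sitting over $(u_{t-1},A_{t-2}B_{t-2})\in\cO_{\rho_\xi^{(t-1)};\sigma_\xi^{(t-1)}}$. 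Thus
\[
\dim\Lambda_{(r_i),I}^\xi
\le \dim\Lambda'^{\,\xi'}
+ \dim\cC_{\xi_{t-1}}
- \dim\cO_{\rho_\xi^{(t-1)};\sigma_\xi^{(t-1)}}
+ \bigl(\dim GL(U_{t-1}) \text{ terms}\bigr),
\]
where the correction accounts for the $h_{t-1}$-fibering versus the honest fiber of $\Lambda_{(r_i),I}^\xi\to\Lambda'^{\,\xi'}$.

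The heart of the argument is then to feed in Lemma~\ref{lem:enpdim}, which bounds $\dim\cC_{\xi_{t-1}}$ in terms of $\dim\cO_{\rho;\sigma}$, $\dim\cP$, $\dim\cP'$, and $|\rho|$, together with the identification of $\dim GL(U_i) = (\dim U_i)^2 = (r_0+\cdots+r_{i-1})^2$ and the orbit-dimension formula~\eqref{eqn:dim-norb}. One feeds in the two alternative inequalities of Lemma~\ref{lem:enpdim} depending on whether $t-1\in I$ or $t-1\in\Ib$ (remembering to swap the roles of the two sides of the wall in the $I$ case), and likewise at every step of the induction. Expanding $d_{(r_i)}$ via its telescoping formula $d_{(r_i)} = d_{(r_0,\dots,r_{t-2})} + (\dim U_{t-1})^2 + 2(\dim U_{t-1})\cdot(\text{stuff})\cdots$ and carefully matching the half-sums $\tfrac12(\dim\cP - \dim\cP')$ that telescope across consecutive $j$'s is the bookkeeping that produces the claimed closed form: the $n(\lambda) - n(\rho_\xi^{(t)}+\sigma_\xi^{(t)})$ term comes from summing the $n(\mu+\nu)$-differences along the chain, and the signed sums $-\sum_{i\in I,\,i-1\in\Ib}|\rho_\xi^{(i)}| + \sum_{i\in\Ib\cup\{t\},\,i-1\in I}|\rho_\xi^{(i)}|$ collect exactly the $|\rho|-|\rho'|$ correction terms from Lemma~\ref{lem:enpdim}, which only appear at the places where the $I$-membership changes.

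\textbf{The main obstacle} will be the telescoping bookkeeping: one must verify that when the two inequalities of Lemma~\ref{lem:enpdim} are chained together over $j=0,\dots,t-1$, the intermediate quantities $\dim\cP^{(j)}$, $\dim\cP'^{(j)}$, and the various $|\rho_\xi^{(j)}|$ cancel in pairs, leaving only the boundary terms at $j=0$ and $j=t$ plus the sign-change terms. This requires being scrupulous about which orbit in which space ($U_j\times\cN_{U_j}$ versus $U_{j+1}\times\cN_{U_{j+1}}$) plays the role of "$\cP$" versus "$\cP'$" at each step — the asymmetry in the definition of enhanced nilpotent pairs and the sign-reversal convention for the $I$ case means a sign error here is easy to make. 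Once the combinatorial identity is set up correctly, each individual inequality is just an instance of Lemma~\ref{lem:enpdim} and the proof is a finite computation.
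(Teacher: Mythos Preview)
Your approach is essentially the paper's, phrased inductively rather than all at once. The paper observes directly that $\Lambda_{(r_i),I}^\xi$ is isomorphic to the iterated fibre product
\[
\cC_0 \times_{\cO_1} \cC_1 \times_{\cO_2} \cdots \times_{\cO_{t-1}} \cC_{t-1},
\]
where $\cC_j = h_j(\Lambda_{(r_i),I}^\xi)\subset\hat\cN_j$ and $\cO_j = \cO_{\rho_\xi^{(j)};\sigma_\xi^{(j)}}\subset U_j\times\cN_{U_j}$. This immediately gives smoothness and the \emph{exact} formula $\dim \Lambda_{(r_i),I}^\xi = \sum_{i=0}^{t-1}(\dim \cC_i - \dim \cO_i)$; Lemma~\ref{lem:enpdim} is then applied termwise and the telescoping done once at the end. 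Your inductive step is just one factor of this fibre product, and it is an \emph{equality}
\[
\dim\Lambda_{(r_i),I}^\xi = \dim\Lambda'^{\,\xi'} + \dim\cC_{t-1} - \dim\cO_{t-1},
\]
with no ``$\dim GL(U_{t-1})$ terms'' to correct for: the honest fibre of $\Lambda_{(r_i),I}^\xi \to \Lambda'^{\,\xi'}$ over a point with image $q\in\cO_{t-1}$ is already precisely the fibre of $p_{U_{t-1}}:\cC_{t-1}\to\cO_{t-1}$ over $q$. Once you drop that phantom correction, your telescoping plan is exactly the paper's computation; the $\tfrac12(\dim\cP_{i+1}-\dim\cP_i)$ terms collapse to $\tfrac12\dim\cP_t$, and the $|\rho_\xi^{(i+1)}|-|\rho_\xi^{(i)}|$ terms (present only for $i\in I$) telescope to the signed sum over the $I/\Ib$ transitions.
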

\begin{proof}
For $0 \le j \le t-1$, let $\cC_j = h_j(\Lambda_{(r_i),I}^\xi)$.  This variety is, by definition, a single $(GL(U_j) \times GL(U_{j+1}))$-orbit in $\hat\cN_j$.  For $0 \le j \le t$, let
$\cO_j$ be the $GL(U_j)$-orbit in $U_j \times \cN_{U_j}$ labelled by the bipartition $(\rho_\xi^{(j)};\sigma_\xi^{(j)})$.  Finally, let $\cP_j = \bar\pi^{U_j}(\cO_j)$; this is the $GL(U_j)$-orbit in $\cN_{U_j}$ labelled by $\rho_\xi^{(j)} + \sigma_\xi^{(j)}$.

It is easy to see from the definition that $\Lambda_{(r_i),I}^\xi$ is isomorphic to the fibre product
\[
\cC_0 \times_{\cO_1} \cC_1 \times_{\cO_2} \cdots \times_{\cO_{t-1}} \cC_{t-1}.
\]
Since the varieties $\cC_i$ and the morphisms $\cC_i \to \cO_i$ and $\cC_i \to \cO_{i-1}$ are all smooth, it follows that $\Lambda^\xi_{(r_i),I}$ is smooth, and that its dimension is given by
\[
\dim \Lambda_{(r_i),I}^\xi = \sum_{i=0}^{t-1} \dim \cC_i - \sum_{i=1}^{t-1} \dim \cO_i.
\]
Since $\dim \cO_0 = 0$, there is no harm in changing this formula to
\begin{equation} \label{eqn:exactformula}
\dim \Lambda_{(r_i),I}^\xi = \sum_{i=0}^{t-1} (\dim \cC_i - \dim \cO_i).
\end{equation}
From Lemma~\ref{lem:enpdim}, we have that
\begin{multline*}
\dim \cC_i - \dim \cO_i \le \half(\dim \cP_{i+1} - \dim \cP_i) + (\dim U_i)(\dim U_{i+1}) \\
+ \begin{cases}
0 & \text{if $i \in \Ib$,} \\
|\rho_\xi^{(i+1)}| - |\rho_\xi^{(i)}| & \text{if $i \in I$.} 
\end{cases}
\end{multline*}
Summing up over $i \in \{0,1,\ldots, t-1\}$, we find that
\[
\dim \Lambda_{(r_i),I}^\xi \le
\half\dim \cP_t + \sum_{i=0}^{t-1} (\dim U_i)(\dim U_{i+1}) +
\sum_{\substack{i \in \Ib \cup \{t\} \\ i-1 \in I}} |\rho_\xi^{(i)}|
-\sum_{\substack{i \in I\\ i-1 \in \Ib}} |\rho_\xi^{(i)}|.
\]
The result then follows from the dimension formula~\eqref{eqn:dim-orb}, in the form
\[
\half\dim \cP_t = \half(\dim U_t)^2 - \half\dim U_t - n(\rho_\xi^{(t)} + \sigma_\xi^{(t)}),
\]
and the following calculation:
\begin{align*}
&\half(\dim U_t)^2 - \half\dim U_t + \sum_{i=0}^{t-1} (\dim U_i)(\dim U_{i+1}) \\
&= \half(r_0 + \cdots + r_{t-1})^2 - \half(r_0 + \cdots + r_{t-1})
+ \sum_{i=0}^{t-1} (r_0+\cdots +r_{i-1})(r_0 + \cdots+r_i) \\
&= \half\sum_{i=0}^{t-1} (r_i^2 - r_i) + \sum_{0\le i<j\le t-1} r_ir_j
+\sum_{i=0}^{t-1}(r_0+\cdots+r_{i-1})^2+ \sum_{0\le i<j\le t-1} r_ir_j \\
&= \sum_{i=0}^{t-1} \binom{r_i}{2} + d_{(r_i)} \\
&= d_{(r_i)} + n(\lambda). \qedhere
\end{align*}
\end{proof}

Recall from Theorem~\ref{thm:gitquot} that $\Phi(\Lambda_{(r_i),I}) = \overline{\cO_{\mu;\nu}}$, where $\Phi:(u_i,A_i,B_i)\mapsto(u_t,A_{t-1}B_{t-1})$. In particular, we have
\begin{equation} \label{eqn:dominance}
\cO_{\rho_\xi^{(t)};\sigma_\xi^{(t)}} =\Phi(\Lambda_{(r_i),I}^\xi)\subset \overline{\cO_{\mu;\nu}}
\qquad\text{and hence}\qquad
\cO_{\rho_\xi^{(t)}+\sigma_\xi^{(t)}} \subset \overline{\cO_{\lambda}}.
\end{equation}
By the dimension formula~\eqref{eqn:dim-orb}, this implies that $n(\rho_\xi^{(t)} + \sigma_\xi^{(t)}) \ge n(\lambda)$.  We have thus proved the following additional inequality.

\begin{cor} \label{cor:dimbound}
In the setting of Proposition~\ref{prop:dimbound}, we have
\[
\dim \Lambda_{(r_i),I}^\xi \le d_{(r_i)} 
- \sum_{\substack{i \in I \\ i-1 \in \Ib}} |\rho_\xi^{(i)}| +
\sum_{\substack{i \in \Ib \cup \{t\} \\ i-1 \in I}} |\rho_\xi^{(i)}|. \qed
\]
\end{cor}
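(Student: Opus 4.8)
The plan is to read off Corollary~\ref{cor:dimbound} from Proposition~\ref{prop:dimbound} by showing that the term $n(\lambda) - n(\rho_\xi^{(t)}+\sigma_\xi^{(t)})$ appearing in~\eqref{eqn:dim-stratum} is nonpositive, so that it may simply be discarded from the right-hand side. Thus the only thing to establish is the inequality $n(\rho_\xi^{(t)}+\sigma_\xi^{(t)}) \ge n(\lambda)$.

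To get that inequality I would use the geometry already in place. By Theorem~\ref{thm:gitquot}, $\Phi$ has image $\overline{\cO_{\mu;\nu}}$, and by construction $\Phi$ maps each stratum $\Lambda_{(r_i),I}^\xi$ into the single orbit $\cO_{\rho_\xi^{(t)};\sigma_\xi^{(t)}}$ of $U_t \times \cN_{U_t}$; hence $\cO_{\rho_\xi^{(t)};\sigma_\xi^{(t)}} \subset \overline{\cO_{\mu;\nu}}$. Applying the projection $\bar\pi^{U_t}$ together with~\eqref{eqn:bipartitionsum} gives $\cO_{\rho_\xi^{(t)}+\sigma_\xi^{(t)}} \subset \overline{\cO_{\lambda}}$ inside $\cN_{U_t}$. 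Comparing dimensions through the formula~\eqref{eqn:dim-orb}, namely $\dim \cO_\kappa = (\dim U_t)^2 - \dim U_t - 2n(\kappa)$, the containment $\dim \cO_{\rho_\xi^{(t)}+\sigma_\xi^{(t)}} \le \dim \cO_\lambda$ is equivalent to $n(\rho_\xi^{(t)}+\sigma_\xi^{(t)}) \ge n(\lambda)$, which is exactly what is needed. Substituting back into~\eqref{eqn:dim-stratum} and dropping the nonpositive summand yields the stated bound.

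I expect no genuine obstacle here: all the substantive work is carried out in Proposition~\ref{prop:dimbound} (the fibre-product computation and the application of Lemma~\ref{lem:enpdim}) and in Theorem~\ref{thm:gitquot} (the identification of $\Phi(\Lambda_{(r_i),I})$), and the remaining input is the elementary fact that $\dim \cO_\bullet$ is monotone for the dominance order on partitions, i.e.\ that $n(\bullet)$ is order-reversing, which is immediate from~\eqref{eqn:dim-orb}. The corollary is therefore a one-line consequence once Proposition~\ref{prop:dimbound} is granted.
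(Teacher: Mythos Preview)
Your proposal is correct and is essentially identical to the paper's own argument: the paper likewise invokes Theorem~\ref{thm:gitquot} to obtain $\cO_{\rho_\xi^{(t)};\sigma_\xi^{(t)}}\subset\overline{\cO_{\mu;\nu}}$ and hence $\cO_{\rho_\xi^{(t)}+\sigma_\xi^{(t)}}\subset\overline{\cO_{\lambda}}$, then uses~\eqref{eqn:dim-orb} to deduce $n(\rho_\xi^{(t)}+\sigma_\xi^{(t)})\ge n(\lambda)$ and drops that nonpositive term from~\eqref{eqn:dim-stratum}.
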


The Kraft--Procesi stratification of $\Lambda_{(r_i)}$ is the $I=\varnothing$ special case of the stratification defined above. In this case, 
the last two terms in Proposition~\ref{prop:dimbound} and Corollary~\ref{cor:dimbound} vanish, and those results become dimension bounds obtained in~\cite[Section~5]{kp}. In fact, Kraft and Procesi proved the following more precise result.

\begin{thm}[{\cite[Section 5]{kp}}] \label{thm:kpstrata} Suppose that $I=\varnothing$.
\begin{enumerate}
\item For any $\xi$, $\dim\Lambda_{(r_i)}^{\xi}\leq d_{(r_i)}$.
\item Equality holds in \textup{(1)} 
for a unique $\xi$: the corresponding stratum consists
of those $(A_i,B_i)$ such that for all $i$, $A_i$ is injective and $B_i$ is surjective.
\item If $(A_i,B_i)$ belongs to a stratum $\Lambda_{(r_i)}^{\xi}$ of dimension
$d_{(r_i)}-1$, then for each $i$, either $A_i$ is injective or $B_i$ is surjective.
\end{enumerate}
\end{thm}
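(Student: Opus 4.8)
\textbf{Part (1)} is immediate: it is exactly the $I=\varnothing$ case of Corollary~\ref{cor:dimbound}, where both correction sums are empty. For parts~(2) and~(3) the plan is to upgrade the proof of Proposition~\ref{prop:dimbound}/Corollary~\ref{cor:dimbound} to an \emph{exact} dimension formula with explicit nonnegative error terms. Running that argument with $I=\varnothing$ (so every $\rho_\xi^{(j)}=\varnothing$ and every vector $u_i=0$), the fibre-product description gives $\dim\Lambda^\xi_{(r_i)}=\sum_{i=0}^{t-1}(\dim\cC_i-\dim\cP_i)$, and the only estimate invoked is the bound~\eqref{eqn:dim-np} applied to the signed partition $\xi_i$ for each $i$, since $v=0$ makes~\eqref{eqn:dim-vsp} an equality. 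Writing $s_i\ge 0$ for the defect in~\eqref{eqn:dim-np} at $\xi_i$---which by the remark following~\eqref{eqn:dim-np} is zero precisely when forming $\sigma_\xi^{(i+1)}$ (the $U_{i+1}$-subordinate partition of $\xi_i$) requires no rearrangement of parts---and telescoping as in the proof of Proposition~\ref{prop:dimbound}, one obtains
\[
\dim\Lambda^\xi_{(r_i)}=d_{(r_i)}-\bigl(n(\sigma_\xi^{(t)})-n(\lambda)\bigr)-\sum_{i=0}^{t-1}s_i,
\]
with $n(\sigma_\xi^{(t)})-n(\lambda)\ge 0$ because $\cO_{\sigma_\xi^{(t)}}\subseteq\overline{\cO_\lambda}$ by~\eqref{eqn:dominance}. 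This re-proves~(1) and reduces~(2) and~(3) to deciding when the total error $\bigl(n(\sigma_\xi^{(t)})-n(\lambda)\bigr)+\sum_i s_i$ equals $0$, respectively $1$.

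For \textbf{part~(2)}, uniqueness of a stratum of dimension $d_{(r_i)}$ is free: $\Lambda_{(r_i)}$ is irreducible of that dimension (Theorem~\ref{thm:kp}), hence has a single dense stratum, necessarily the only one of top dimension. To identify it, I would show that the locus $\Lambda^{\mathrm{gen}}\subseteq\Lambda_{(r_i)}$ on which every $A_i$ is injective and every $B_i$ surjective is a single nonempty stratum. It is a union of strata, since the ranks of the $A_i,B_i$ are constant on a stratum; it is open, and nonempty (one can realize $U_0\subset\cdots\subset U_t$ with each $A_i$ the inclusion and $B_{t-1}$ chosen so that every $B_i=B_{t-1}|_{U_{i+1}}$ carries $U_{i+1}$ onto $U_i$), hence dense; and it is a \emph{single} stratum because on it $\xi$ is forced: injectivity of $A_i$ forces $\epsilon^{(i)}\equiv{-}$, surjectivity of $B_i$ forces every part of $\xi_i$ odd, so $\xi_i$ is determined by $\sigma_\xi^{(i)}$ together with $r_i$, and the sequence $\sigma_\xi^{(0)}=\varnothing,\sigma_\xi^{(1)},\dots$ is then pinned down recursively. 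Being dense, $\Lambda^{\mathrm{gen}}$ is the unique top stratum; consistently, all error terms vanish on it (no rearrangement occurs since $\epsilon^{(i)}\equiv{-}$, and the recursion yields $\sigma_\xi^{(t)}=\lambda$).

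For \textbf{part~(3)}, if $\dim\Lambda^\xi_{(r_i)}=d_{(r_i)}-1$ then the total error is exactly $1$, and the goal is to conclude that no step $i$ is ``bad''---i.e.\ for each $i$ either $A_i$ is injective or $B_i$ surjective, both conditions on $\xi_i$ alone. The point to establish is that a bad step forces total error at least $2$: if (say, at the least) step $i_0$ neither $A_{i_0}$ is injective nor $B_{i_0}$ surjective, then $\sigma_\xi^{(i_0+1)}$ is a strict degeneration, in the dominance order, of the partition $\sigma_\xi^{(i_0)}+1^{r_{i_0}}$ that a good step with the same input would produce; a short case analysis on $\xi_{i_0}$ shows that the defect $s_{i_0}$ together with the resulting $n$-deficit (which is a sum of positive contributions, one per covering relation separating the two partitions) is at least $2$; and since the step operations are monotone for the dominance order, this deficit persists through steps $i_0+1,\dots,t-1$, so that $n(\sigma_\xi^{(t)})-n(\lambda)$ absorbs it. Making this precise---the combinatorial bound $s_{i_0}+\bigl(n(\sigma_\xi^{(i_0)}+1^{r_{i_0}})-n(\sigma_\xi^{(i_0+1)})\bigr)\ge 2$ for every bad $\xi_{i_0}$, and the verification that the monotone propagation does not cancel---is the main obstacle; it is essentially the content of~\cite[Section~5]{kp}, and can alternatively be organized as an induction on $t$ mirroring the proof of Theorem~\ref{thm:gitquot}, peeling off $(A_{t-1},B_{t-1})$ and the constraint $B_{t-1}A_{t-1}=A_{t-2}B_{t-2}$.
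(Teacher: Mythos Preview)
The paper does not give its own proof of this theorem: it is quoted as a result of Kraft--Procesi, with the citation \cite[Section~5]{kp} in the theorem header, and the surrounding text explicitly says ``Kraft and Procesi proved the following more precise result.'' So there is nothing to compare against here; I can only assess your argument on its own terms.

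Your part~(1) is fine, and your exact dimension formula
\[
\dim\Lambda^\xi_{(r_i)}=d_{(r_i)}-\bigl(n(\sigma_\xi^{(t)})-n(\lambda)\bigr)-\sum_{i=0}^{t-1}s_i
\]
is correct and is a good organizing device. However, there are two genuine problems. First, in part~(2) your ``free'' uniqueness argument invokes the irreducibility of $\Lambda_{(r_i)}$ from Theorem~\ref{thm:kp}. That is circular: in Kraft--Procesi's paper (and in the logic mirrored by Theorem~\ref{thm:nonsingrel} here), the normality and irreducibility of $\Lambda_{(r_i)}$ are \emph{deduced from} the stratum dimension estimates you are trying to prove. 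You do have the pieces for a non-circular argument---your formula shows that the stratum you describe has total error zero, and conversely any $\xi$ with total error zero must have $\sigma_\xi^{(t)}=\lambda$ and every $s_i=0$, which you can unwind to force each $\xi_i$ to be the ``all parts odd, all signs $-$'' signed partition---but you should make that the argument, not the afterthought labelled ``consistently.''

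Second, and more seriously, part~(3) is not a proof but a plan. You correctly identify that the crux is showing a bad step forces total error at least $2$, but the two ingredients you name---the local bound $s_{i_0}+\bigl(n(\text{generic output})-n(\sigma_\xi^{(i_0+1)})\bigr)\ge 2$ and the ``monotone propagation'' of the $n$-deficit through later steps---are each nontrivial combinatorial facts that you have not established. The propagation step in particular is delicate: one must show that whatever deficit is created at step $i_0$ cannot be recovered at later steps, and this requires comparing the possible outputs $\sigma_\xi^{(j+1)}$ of step $j$ for two different inputs $\sigma_\xi^{(j)}$ related by dominance. Your own phrasing (``is essentially the content of~\cite[Section~5]{kp}'') concedes this. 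As written, part~(3) defers the entire substance of the argument back to the reference you are meant to be re-proving.
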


Motivated by this result, we formulate the following conjecture, which would clearly imply Conjecture~\ref{conj:dimbound} and hence Conjecture~\ref{conj:lambdanormal}.

\begin{conj} \label{conj:strata}
\begin{enumerate}
\item For any $\xi\in\Xi$, $\dim\Lambda_{(r_i),I}^{\xi}\leq d_{(r_i),I}$.
\item Equality holds in \textup{(1)} 
for a unique $\xi$: the corresponding stratum consists
of those $(u_i,A_i,B_i)$ such that $A_i$ is injective and $B_i$ is surjective for all $i$,
and $u_{i+1}\notin\im(A_i)$ for all $i\in I$.
\item If $\dim \Lambda^\xi_{(r_i),I} = d_{(r_i),I}-1$, then $\Lambda^\xi_{(r_i),I} \subset \Lambda^\circ$.
\end{enumerate}
\end{conj}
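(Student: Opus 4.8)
The plan is to prove Conjecture~\ref{conj:strata} by reducing all three parts to a purely combinatorial analysis of the inequality in Proposition~\ref{prop:dimbound}, sharpened using the information in Remark~\ref{rmk:lem-inequality} and the precise equality condition in~\eqref{eqn:dim-np}. Write $R_j = \dim U_j = r_0+\cdots+r_{j-1}$ for brevity. The starting point is the exact formula~\eqref{eqn:exactformula}, $\dim \Lambda_{(r_i),I}^\xi = \sum_{i=0}^{t-1}(\dim \cC_i - \dim \cO_i)$, together with the two bounds in Lemma~\ref{lem:enpdim}. Note that Lemma~\ref{lem:enpdim} gives \emph{two} inequalities for each $\cC_i$; in the proof of Proposition~\ref{prop:dimbound} only the first was used when $i\in\Ib$ and only the second when $i\in I$. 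To get the sharper statement I would, at each index $i$, track exactly how much is lost in whichever bound is used, and express the total deficit $d_{(r_i),I} - \dim\Lambda_{(r_i),I}^\xi$ as a sum of manifestly nonnegative combinatorial contributions.

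First I would assemble the bookkeeping. For $i\in\Ib$ the first bound of Lemma~\ref{lem:enpdim} loses exactly (a) the quantity by which $\dim\pi^{U_i,U_{i+1}}(\cC_i)$ falls below $\half(\dim\cP_i+\dim\cP_{i+1}) + R_iR_{i+1}$ — which by~\eqref{eqn:dim-np} is twice the number of parts rearranged in forming the `$-$' subordinate partition of $\xi_i$, i.e.\ a quantity measuring failure of $B$-surjectivity/$A$-injectivity — plus (b) the gap $\dim E^{(x,y)}_{U_i}v \le \dim E^{yx}v = |\rho_\xi^{(i)}|$ coming from~\eqref{eqn:dim-vsp}. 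For $i\in I$ the analogous analysis with the second bound of Lemma~\ref{lem:enpdim} applies, with the roles of $U_i$ and $U_{i+1}$ (and of $+/-$) swapped, and the `extra' $|\rho|-|\rho'|$ term becomes $|\rho_\xi^{(i+1)}| - |\rho_\xi^{(i)}|$ from the displayed case split in Proposition~\ref{prop:dimbound}. Summing and using the identity $\half\dim\cP_t = \half R_t^2 - \half R_t - n(\rho_\xi^{(t)}+\sigma_\xi^{(t)})$ exactly as in Proposition~\ref{prop:dimbound}, I would obtain
\[
\dim\Lambda_{(r_i),I}^\xi \;=\; d_{(r_i),I} \;-\; \bigl(n(\rho_\xi^{(t)}+\sigma_\xi^{(t)}) - n(\lambda)\bigr) \;-\; \sum_{i=0}^{t-1} \delta_i(\xi),
\]
where each $\delta_i(\xi)\ge 0$ is the deficit described above, and the first parenthesised term is $\ge 0$ by~\eqref{eqn:dominance}. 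For part~(1), this immediately gives $\dim\Lambda^\xi \le d_{(r_i),I}$.

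For part~(2), equality forces $n(\rho_\xi^{(t)}+\sigma_\xi^{(t)}) = n(\lambda)$, hence $\rho_\xi^{(t)}+\sigma_\xi^{(t)} = \lambda$ by~\eqref{eqn:dominance} (using that $\cO_{\lambda}$ is the unique open orbit in $\overline{\cO_\lambda}$), and it forces every $\delta_i(\xi)=0$. Vanishing of the `rearrangement' part of each $\delta_i$ means no rearrangement occurs in forming the relevant subordinate partition of $\xi_i$, which (as in Theorem~\ref{thm:kpstrata}(2), whose proof I would follow) translates into: for $i\in\Ib$, $A_i$ is injective \emph{or} $B_i$ is surjective; for $i\in I$, likewise. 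Vanishing of the $E^{(x,y)}_{U_i}v$-part pins things down further. The remaining work is to run the same induction on $t$ as in Theorem~\ref{thm:kpstrata}, combined with the reverse-inclusion argument of Proposition~\ref{prop:itq}, to check that these conditions are satisfied on exactly one stratum and that on that stratum $A_i$ is injective, $B_i$ is surjective for all $i$, with $u_{i+1}\notin\im(A_i)$ for $i\in I$: the point is that the bipartition data $(\rho_\xi^{(j)};\sigma_\xi^{(j)})$ are then forced to be the column-truncations of $(\mu;\nu)$, hence $\xi$ is unique. For part~(3), equality up to $1$ means the total subtracted quantity equals $1$, so either exactly one $\delta_i(\xi)=1$ and all others vanish (plus $\rho_\xi^{(t)}+\sigma_\xi^{(t)}=\lambda$), or all $\delta_i$ vanish and $n(\rho_\xi^{(t)}+\sigma_\xi^{(t)}) = n(\lambda)+1$. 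In the first case the single unit deficit still leaves $A_i$ injective-or-$B_i$ surjective at \emph{every} index (a deficit of $1$ in the rearrangement term is only possible alongside one of injectivity/surjectivity holding, by Remark~\ref{rmk:lem-inequality} on moves of type~(1),(2)), and the $u_{i+1}\notin\im(A_i)$ condition for $i\in I$ survives wherever the deficit is not concentrated; one checks the concentrated index is also controlled. In the second case one uses the covering-relation description of Section~\ref{sect:covering} together with Remark~\ref{rmk:lem-inequality} to see that a degeneration of $n$-value by $1$ is compatible with all the $\Lambda^\circ$ conditions. In both cases $\Lambda^\xi_{(r_i),I} \subset \Lambda^\circ$.

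The main obstacle will be making the deficit decomposition $\dim\Lambda^\xi = d_{(r_i),I} - (n\text{-gap}) - \sum_i \delta_i(\xi)$ genuinely tight — that is, verifying that the two bounds of Lemma~\ref{lem:enpdim}, applied one to each index according to whether $i\in I$ or $i\in\Ib$, really do account for \emph{all} the codimension, with no further hidden loss, and that each $\delta_i$ has the clean combinatorial meaning claimed (a nonnegative integer vanishing iff the appropriate injectivity/surjectivity and non-containment conditions hold). This requires a careful simultaneous analysis of the rearrangement term from~\eqref{eqn:dim-np} and the $E^{(x,y)}_V v \le E^{yx} v$ gap from~\eqref{eqn:dim-vsp}, and in particular understanding exactly when \emph{both} can be nonzero at the same index — this is precisely the combinatorial content that goes beyond~\cite{kp}, and is why we can presently verify it only for restricted classes of $(\mu;\nu)$ (Theorem~\ref{thm:main}) and by computer in low dimensions.
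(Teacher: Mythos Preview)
The statement you are attempting to prove is an open conjecture in the paper: the authors do \emph{not} prove Conjecture~\ref{conj:strata} in general, only in the special cases of Theorems~\ref{thm:easy} and~\ref{thm:main} and by computer for $n\le 6$. So there is no ``paper's own proof'' to compare with in the general case.

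Your deficit formula
\[
\dim\Lambda_{(r_i),I}^\xi \;=\; d_{(r_i),I} \;-\; \bigl(n(\rho_\xi^{(t)}+\sigma_\xi^{(t)}) - n(\lambda)\bigr) \;-\; \sum_{i=0}^{t-1}\delta_i(\xi)
\]
is not what the summation in Proposition~\ref{prop:dimbound} actually yields. Carrying out the sum exactly as you describe (first bound of Lemma~\ref{lem:enpdim} for $i\in\Ib$, second for $i\in I$, subtracting the genuine deficits $\delta_i\ge 0$) gives
\[
\dim\Lambda_{(r_i),I}^\xi \;=\; d_{(r_i)} + n(\lambda) - n(\rho_\xi^{(t)}+\sigma_\xi^{(t)}) \;+\; \Bigl(\sum_{\substack{i\in\Ib\cup\{t\}\\ i-1\in I}}|\rho_\xi^{(i)}| - \sum_{\substack{i\in I\\ i-1\in\Ib}}|\rho_\xi^{(i)}|\Bigr) \;-\; \sum_i\delta_i(\xi),
\]
exactly the shape of~\eqref{eqn:dim-stratum} with the slack made explicit. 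The telescoping sum in parentheses depends on $\xi$ and is \emph{not} equal to $|\mu|$ in general; your formula silently replaced it by $|\mu|$. Bounding that telescoping term by $|\mu|$ (together with the $n$-gap) is precisely the missing combinatorial statement, and your $\delta_i$'s---which track only the rearrangement gap in~\eqref{eqn:dim-np} and the inclusion~\eqref{eqn:dim-vsp}---do not absorb it.

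This is why the paper's proof of the special case $I=\{s,\ldots,t-1\}$ (Theorem~\ref{thm:main}) works: there $\rho_\xi^{(0)}=\cdots=\rho_\xi^{(s)}=\varnothing$, the telescoping sum collapses to the single term $|\rho_\xi^{(t)}|$, and then Lemma~\ref{lem:dim-norb} bounds $|\rho_\xi^{(t)}| - n(\rho_\xi^{(t)}+\sigma_\xi^{(t)})$ by $|\mu|-n(\lambda)$. For general $I$, the telescoping sum involves several intermediate $|\rho_\xi^{(i)}|$'s with alternating signs at the $I/\Ib$ interfaces, and no available inequality controls it. Your closing paragraph identifies an obstacle, but misdiagnoses it as a question of whether each individual $\delta_i$ is ``tight''; the real difficulty is this uncontrolled telescoping term, which is why part~(1) itself remains open.
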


This formulation of the problem lends itself to purely combinatorial calculations.  The set of sequences of signed quasibipartitions $\Xi$ has a purely combinatorial description; the dimension upper bound in Proposition~\ref{prop:dimbound} is combinatorial in nature (and can easily be improved to an exact formula, at the cost of more combinatorial complexity); and the conditions in Conjecture~\ref{conj:strata}(2),(3) admit the following combinatorial descriptions.

\begin{lem} \label{lem:signed-diagram-interpretation}
Let $\xi=(\xi_j)\in\Xi$, and let $(u_i,A_i,B_i)\in\Lambda^\xi_{(r_i),I}$.
\begin{enumerate}
\item 
$(u_i,A_i,B_i)$ satisfies the condition of Conjecture~\ref{conj:strata}(2) if and only if, in the signed quasibipartitions $\xi_j$ for all $j\in\Ib$,
\[
\text{every row begins and ends with a }\textup{`$-$'}\text{ box,}
\] 
and in the signed quasibipartitions $\xi_j$ for all $j\in I$,
\[
\begin{array}{c}
\text{every row begins and ends with a }\textup{`$+$'}\text{ box}\\
\text{and there is a box immediately left of the wall which is at the end of a row.}
\end{array}
\]
\item
$(u_i,A_i,B_i)\in\Lambda^\circ$ if and only if, in the signed quasibipartitions $\xi_j$ for all $j\in\Ib$, 
\[
\begin{cases}
\text{either every row begins with a }\textup{`$-$'}\text{ box}\\
\text{or every row ends with a }\textup{`$-$'}\text{ box,}
\end{cases}
\]
and in the signed quasibipartitions $\xi_j$ for all $j\in I$,
\[
\begin{cases}
\text{either every row begins with a }\textup{`$+$'}\text{ box}\\
\quad\text{and there is a box immediately left of the wall which is at the end of a row}\\
\text{or every row ends with a }\textup{`$+$'}\text{ box.}
\end{cases}
\]
\end{enumerate}
\end{lem}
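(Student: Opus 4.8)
The plan is to make explicit, for each index $j$, what the label $\xi_j$ records about the $j$-th ``slot'' of a point $(u_i,A_i,B_i)\in\Lambda^\xi_{(r_i),I}$, and then to translate the three linear-algebra conditions occurring in Conjecture~\ref{conj:strata}(2) and in \eqref{eq:lambda-zero} --- ``$A_j$ injective'', ``$B_j$ surjective'', and ``$u_{j+1}\notin\im(A_j)$'' --- into assertions about the signs decorating the boxes of $\xi_j$.

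First I would fix the bookkeeping. By construction $h_j(u_i,A_i,B_i)$ is the enhanced nilpotent pair $(u_j,A_j,B_j)\in U_j\times\cN_{U_j,U_{j+1}}$ if $j\in\Ib$, and $(u_{j+1},B_j,A_j)\in U_{j+1}\times\cN_{U_j,U_{j+1}}$ if $j\in I$; in the latter case the meaning of the signs is reversed (as at the end of Section~\ref{sect:nilppairs}), so that the `$+$' boxes index a basis of $U_{j+1}$ and the `$-$' boxes a basis of $U_j$. Each of the three conditions is invariant under the $(GL(U_j)\times GL(U_{j+1}))$-action on this slot --- for ``$u_{j+1}\notin\im(A_j)$'' because $u_{j+1}$ and $\im(A_j)$ are moved by the same $GL$-factor --- so each depends only on $\xi_j$ and may be checked in a single normal basis for the orbit labelled by $\xi_j$, as provided by Lemma~\ref{lem:johnson}.

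Next I would record the dictionary in such a normal basis. There the `$+$' boxes of $\xi_j$ index a basis of the `$+$'-space, the `$-$' boxes index a basis of the `$-$'-space, the two structure maps send each box to its left neighbour in its row (the wall being ignored) or to $0$ if there is none, and the enhancement vector is the sum of the boxes immediately left of the wall, all of which are `$+$' boxes. Since signs alternate along each row, the structure map from the `$+$'-space to the `$-$'-space is injective iff every row begins with `$-$' and surjective iff every row ends with `$+$', and the structure map from the `$-$'-space to the `$+$'-space is injective iff every row begins with `$+$' and surjective iff every row ends with `$-$'. For the image condition the key remark is that the enhancement vector is a $0$--$1$ combination of basis vectors, hence lies in the coordinate subspace $\im(A_j)$ iff all of those basis vectors do; since in the $j\in I$ orientation $\im(A_j)$ is spanned precisely by the `$+$' boxes that are not last in their row, the enhancement vector fails to lie in $\im(A_j)$ iff some box immediately left of the wall is at the end of a row.

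Finally I would assemble these, using that $U_{j+1}$ carries `$-$' if $j\in\Ib$ and `$+$' if $j\in I$, so that for $j\in\Ib$ the map $A_j$ goes from the `$+$'-space to the `$-$'-space and $B_j$ the other way, while for $j\in I$ the reverse holds. The dictionary then gives, for $j\in\Ib$: ``$A_j$ injective'' $\iff$ every row of $\xi_j$ begins with `$-$', and ``$B_j$ surjective'' $\iff$ every row ends with `$-$'; and for $j\in I$: ``$A_j$ injective'' $\iff$ every row begins with `$+$', ``$B_j$ surjective'' $\iff$ every row ends with `$+$', and ``$u_{j+1}\notin\im(A_j)$'' $\iff$ some box immediately left of the wall is at the end of a row. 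Substituting these equivalences into the condition of Conjecture~\ref{conj:strata}(2) proves part~(1), and into \eqref{eq:lambda-zero} proves part~(2). I expect the main obstacle to be the image condition: one must notice that the enhancement vector is a coordinate vector in the normal basis (so that membership in $\im(A_j)$ reduces to containment of the associated set of boxes) and must identify $\im(A_j)$ correctly in the sign-reversed orientation; the remaining bookkeeping --- the alternation of signs along rows, the fact that every box immediately left of the wall is a `$+$' box, and keeping the cases $j\in I$ and $j\in\Ib$ apart --- is routine.
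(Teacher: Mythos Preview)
Your proposal is correct and takes essentially the same approach as the paper, which simply says the lemma is ``a straightforward translation of the definitions, using the basis interpretation of the signed quasibipartition diagram given in Lemma~\ref{lem:johnson}.'' You have made that translation explicit --- in particular, correctly handling the sign reversal for $j\in I$ and the image condition via the coordinate description of $u_{j+1}$ --- whereas the paper leaves these details to the reader.
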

\begin{proof}
This is a straightforward translation of the definitions, using the basis interpretation of the signed quasibipartition diagram given in Lemma~\ref{lem:johnson}.
\end{proof}

The authors have implemented a computer program to test Conjecture~\ref{conj:strata} (and therefore all the other conjectures in the paper), and have found that it holds for all cases with $n \le 6$, with part (1) verified up to $n=9$.  

\begin{rmk}
Attempts to prove Conjecture~\ref{conj:strata} have revealed that not all properties which hold in \cite{kp} have obvious enhanced analogues. For example, in the Kraft--Procesi situation one has
\begin{equation}
\codim_{\Lambda_{(r_i)}}\,\Lambda^{\xi}_{(r_i)}\geq\frac{1}{2}\codim_{\overline{\cO_\lambda}}\,\Phi(\Lambda^{\xi}_{(r_i)}).
\end{equation}
This is one way of stating the $I=\varnothing$ case of Proposition~\ref{prop:dimbound}; the analogue for other classical groups is~\cite[Lemma 5.4]{kp2}. However, the obvious enhanced analogue of this inequality, namely
\begin{equation} \label{eqn:false}
\codim_{\Lambda_{(r_i),I}}\,\Lambda^{\xi}_{(r_i),I}\geq\frac{1}{2}\codim_{\overline{\cO_{\mu;\nu}}}\,\Phi(\Lambda^{\xi}_{(r_i),I}),
\end{equation}
is false in general. 
\end{rmk}
\begin{exam}
Continue with Example~\ref{exam:strata}. There is a stratum $\Lambda_{(1,1,2),\{0,2\}}^\xi$ consisting of all tuples $(u_i,A_i,B_i)$ such that $u_1\neq 0$, $A_1=0$, $B_1\neq 0$, $A_2$ is injective, and $B_2$ is surjective. The corresponding signed quasibipartitions are as follows:
\[
\xi_0 =
\begin{tableau}
\row{\rce{+}}
\end{tableau}
\qquad
\xi_1 =
\begin{tableau}
\row{\rce{+}\cm}
\row{\q\lce{-}}
\end{tableau}
\qquad
\xi_2 =
\begin{tableau}
\row{\rce{+}\cm\cp}
\row{\rce{+}\cm\cp}
\end{tableau}
\]
This stratum has codimension $1$ in $\Lambda_{(1,1,2),\{0,2\}}$, and belongs to $\Lambda^\circ$ in accordance with Conjecture~\ref{conj:strata}(3). However, $\Phi(\Lambda_{(1,1,2),\{0,2\}}^\xi)=\cO_{(1,1);(1,1)}$ has codimension $3$ in $\overline{\cO_{(2,1);(1)}}$, in violation of \eqref{eqn:false}.
\end{exam}

We now prove our conjectures in the special case which is `opposite' to the one handled in Theorem~\ref{thm:easy}.

\begin{thm}\label{thm:main}
If $I=\{s,s+1,\cdots,t-1\}$ for some $1\leq s\leq t-1$, then Conjecture~\ref{conj:strata} holds.
\end{thm}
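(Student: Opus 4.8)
The plan is to deduce Conjecture~\ref{conj:strata} from the dimension estimate of Proposition~\ref{prop:dimbound}, exploiting the very special shape of $I$ and tracking the slack in every inequality used to prove that estimate. First I would record the structural simplification available here: since $u_0=0$ and $A_iu_i=u_{i+1}$ for all $i\in\Ib=\{0,\dots,s-1\}$, every point of $\Lambda_{(r_i),I}$ satisfies $u_0=u_1=\cdots=u_s=0$; hence the sub-quiver of $\Lambda_{(r_i),I}$ on $U_0,\dots,U_s$ carries no vectors and is exactly the Kraft--Procesi variety $\Lambda_{(r_0,\dots,r_{s-1})}$, and $\rho_\xi^{(j)}=\varnothing$ for $0\le j\le s$ and all $\xi\in\Xi$. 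For this $I$ the two index sets in Proposition~\ref{prop:dimbound} reduce to $\{s\}$ and $\{t\}$, so, using $\rho_\xi^{(s)}=\varnothing$, the bound becomes $\dim\Lambda_{(r_i),I}^\xi\le d_{(r_i)}+n(\lambda)-n(\rho_\xi^{(t)}+\sigma_\xi^{(t)})+|\rho_\xi^{(t)}|$. Part~(1) of Conjecture~\ref{conj:strata} then follows at once: since $\cO_{\rho_\xi^{(t)};\sigma_\xi^{(t)}}\subset\overline{\cO_{\mu;\nu}}$ by~\eqref{eqn:dominance}, the second inequality of Lemma~\ref{lem:dim-norb} gives $|\rho_\xi^{(t)}|-n(\rho_\xi^{(t)}+\sigma_\xi^{(t)})\le|\mu|-n(\lambda)$, whence $\dim\Lambda_{(r_i),I}^\xi\le d_{(r_i)}+|\mu|=d_{(r_i),I}$.

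For parts~(2) and~(3) I would set up a \emph{defect decomposition}. Following the proof of Proposition~\ref{prop:dimbound}, but recording at each position $i$ the slack $\gamma_i(\xi)\ge0$ in the application of Lemma~\ref{lem:enpdim} to the orbit $\cC_i=h_i(\Lambda_{(r_i),I}^\xi)$ --- the sum of the slacks in~\eqref{eqn:dim-np} and in~\eqref{eqn:dim-vsp} --- and combining with the slack $\delta_2(\xi)=\bigl(|\mu|-n(\lambda)\bigr)-\bigl(|\rho_\xi^{(t)}|-n(\rho_\xi^{(t)}+\sigma_\xi^{(t)})\bigr)$ of Lemma~\ref{lem:dim-norb}, one gets the exact identity
\[
d_{(r_i),I}-\dim\Lambda_{(r_i),I}^\xi=\delta_2(\xi)+\sum_{i=0}^{t-1}\gamma_i(\xi),
\]
with every summand nonnegative. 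The work is to translate each summand into a combinatorial condition on $\xi$ and to combine them. For $i\in\Ib$ the vector $u_i$ vanishes, so the~\eqref{eqn:dim-vsp}-slack is zero and $\gamma_i(\xi)$ is the~\eqref{eqn:dim-np}-slack of a nilpotent-pair orbit; these, together with the part of $\delta_2(\xi)$ recording the degeneration of the Jordan type $\sigma_\xi^{(s)}$ of $A_{s-1}B_{s-1}$ away from its generic value $\nu$, are precisely the quantities Kraft and Procesi analyse for $\Lambda_{(r_0,\dots,r_{s-1})}$, so by Theorem~\ref{thm:kpstrata} they all vanish exactly when every $A_i$ ($i\in\Ib$) is injective, every such $B_i$ surjective and $\sigma_\xi^{(s)}=\nu$, and their sum is $\le1$ only when each $A_i$ ($i\in\Ib$) is injective or surjective --- the $\Lambda^\circ$-condition for the $\Ib$-positions, by Lemma~\ref{lem:signed-diagram-interpretation}(2). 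The remaining part of $\delta_2(\xi)$ is controlled by Remark~\ref{rmk:lem-inequality}: it vanishes exactly when $(\rho_\xi^{(t)};\sigma_\xi^{(t)})$ is reached from $(\mu;\nu)$ by covering moves of type~(4) only. For $i\in I$ one must additionally handle the~\eqref{eqn:dim-vsp}-slack $|\rho_\xi^{(i+1)}|-\dim E^{(B_i,A_i)}_{U_{i+1}}u_{i+1}$: I would show it vanishes exactly when $A_i$ is injective, $B_i$ surjective and $u_{i+1}\notin\im A_i$ --- note that when $A_i$ is injective and $B_i$ surjective one even has $E^{(B_i,A_i)}_{U_{i+1}}=E^{A_iB_i}$, since then $\im A_i=\im(A_iB_i)$ is stable under every operator commuting with $A_iB_i$ --- and that $\gamma_i(\xi)\le1$ forces the $\Lambda^\circ$-condition of Lemma~\ref{lem:signed-diagram-interpretation}(2) for $i\in I$. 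This last point uses Johnson's combinatorial description~\cite{johnson} of $\dim E^{(x,y)}_V v$.

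Granting these identifications, parts~(2) and~(3) are formal. The stratum named in Conjecture~\ref{conj:strata}(2) is nonempty (one writes down a point, as in~\cite{kp}) and, being nonempty and open in the smooth variety $\Lambda^\circ$ of Proposition~\ref{prop:nonsingloc}, has dimension $d_{(r_i),I}$; conversely, if $\dim\Lambda_{(r_i),I}^\xi=d_{(r_i),I}$ then every summand of the decomposition vanishes, and the identifications push $(u_i,A_i,B_i)$ into that stratum --- in particular the type-(4) chain permitted by $\delta_2(\xi)=0$ must be trivial, since any nontrivial such move would cost positive slack in the $I$-region. Uniqueness is immediate, the conditions ``$A_i$ injective, $B_i$ surjective, $u_{i+1}\notin\im A_i$ for $i\in I$'' determining each $\cC_i$ hence each $\xi_i$, and such sequences automatically lying in $\Xi$. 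If $\dim\Lambda_{(r_i),I}^\xi=d_{(r_i),I}-1$ then exactly one summand equals $1$ and the rest vanish; in each case the identifications --- Theorem~\ref{thm:kpstrata}(3) for the $\Ib$-positions, its enhanced counterpart for the $I$-positions, and a separate check of the case $\delta_2(\xi)=1$ --- show that every $\xi_i$ satisfies the $\Lambda^\circ$-conditions of Lemma~\ref{lem:signed-diagram-interpretation}(2), i.e.\ $\Lambda_{(r_i),I}^\xi\subset\Lambda^\circ$.

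The main obstacle is the analysis of the positions $i\in I$: unlike in~\cite{kp}, the slack in~\eqref{eqn:dim-vsp} can be positive, and one must determine combinatorially --- in terms of the signed quasibipartition $\xi_i$ --- exactly when $E^{(x,y)}_V v=E^{yx}v$, and then verify that this condition, together with the~\eqref{eqn:dim-np}-criterion, cuts out the locus $\{A_i\text{ injective},\ B_i\text{ surjective},\ u_{i+1}\notin\im A_i\}$ at defect~$0$ and is violated by at most~$1$ only inside $\Lambda^\circ$ --- in particular that any type-(4) degeneration of the target bipartition $(\rho_\xi^{(t)};\sigma_\xi^{(t)})$ costs slack somewhere, so cannot appear in a top-dimensional stratum. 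A subsidiary difficulty is the bookkeeping at the junction $i=s$, where the Kraft--Procesi region and the enhanced region share the nilpotent endomorphism $A_{s-1}B_{s-1}$ of $U_s$.
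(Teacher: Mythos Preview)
Your proof of part~(1) is correct and matches the paper's. Your defect decomposition
\[
d_{(r_i),I}-\dim\Lambda_{(r_i),I}^\xi=\delta_2(\xi)+\sum_{i=0}^{t-1}\gamma_i(\xi)
\]
is also valid. However, your plan for parts~(2) and~(3) diverges from the paper's and runs into the obstacle you yourself flag.

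The paper never analyses the individual slacks $\gamma_i$ for $i\in I$. Instead, it observes that the hypothesis on $I$ is equivalent to \emph{every column of $\mu$ being strictly longer than every column of $\nu$}, which makes a type-(4) move from $(\mu;\nu)$ combinatorially impossible. Hence $\delta_2(\xi)=0$ forces $(\rho_\xi^{(t)};\sigma_\xi^{(t)})=(\mu;\nu)$ directly. The paper then uses a pure row-counting argument: since the number of rows of $\xi_{t-1}$ containing a `$+$' box equals $\ell(\mu+\nu)=r_{t-1}$, and this also equals $\dim U_t-\dim U_{t-1}$, every row must begin and end with `$+$', which pins down $\xi_{t-1}$ uniquely and yields $(\rho_\xi^{(t-1)};\sigma_\xi^{(t-1)})=(\mu';\nu)$. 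Iterating gives the whole sequence $\xi$. The upshot is the strong statement that $\delta_2(\xi)=0$ alone---with no hypothesis on the $\gamma_i$---already determines $\xi$ to be the top stratum. This immediately gives part~(2), and it means that a codimension-$1$ stratum must have $\delta_2(\xi)=1$ and every $\gamma_i(\xi)=0$; the cases $\gamma_j=1$ with $\delta_2=0$ that you propose to handle simply do not occur. Part~(3) is then treated by induction on $t$: knowing $\delta_2(\xi)=1$, Remark~\ref{rmk:lem-inequality} leaves only a short explicit list of moves from $(\mu;\nu)$ to $(\rho_\xi^{(t)};\sigma_\xi^{(t)})$, and for each the paper determines the possible $\xi_{t-1}$ and verifies the $\Lambda^\circ$-condition at position $t-1$ while reducing the remaining positions to the analogous statement for $(\mu';\nu)$.

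Your route, by contrast, rests on the claim that for $i\in I$ the slack $\gamma_i$ vanishes exactly when $A_i$ is injective, $B_i$ is surjective, and $u_{i+1}\notin\im A_i$, and that $\gamma_i\le 1$ forces the $\Lambda^\circ$-condition. You correctly identify this as the main obstacle, and it is a genuine one: establishing it would require a case analysis of Johnson's dimension formula for $E^{(x,y)}_V v$ across all enhanced-nilpotent-pair orbits, together with careful use of the compatibility constraints in $\Xi$ (as your own example at the junction $i=s$ shows, naive counterexamples are ruled out only by those constraints). The paper's row-counting rigidity sidesteps all of this. So while your framework is sound, the step you defer is exactly the hard step, and the paper's argument shows it is unnecessary.
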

\begin{proof}
First of all, note that the assumption that $I=\{s,s+1,\cdots,t-1\}$ for some $1\leq s\leq t-1$ is equivalent to saying that every column of $\mu$ is strictly longer than every column of $\nu$ (and both $\mu$ and $\nu$ are nonempty).

We prove the three parts of Conjecture~\ref{conj:strata} in turn.  Since $\Ib=\{0,1,\ldots, s-1\}$, we have $u_0 = u_1 = \cdots = u_s = 0$ for all $(u_i,A_i,B_i) \in \Lambda_{(r_i),I}$.  Therefore, for any $\xi \in \Xi$, we have $\rho^{(0)}_\xi = \rho^{(1)}_\xi = \cdots = \rho^{(s)}_\xi = \varnothing$.  The inequality in Proposition~\ref{prop:dimbound} reduces to
\begin{equation}
\label{eq:apply-prop-6.1}
\dim \Lambda_{(r_i),I}^\xi \le d_{(r_i)} + n(\lambda) - n(\rho^{(t)}_\xi + \sigma^{(t)}_\xi) + |\rho^{(t)}_\xi|.
\end{equation}
Recall from \eqref{eqn:dominance} that $\cO_{\rho^{(t)}_\xi;\sigma^{(t)}_\xi} \subset \overline{\cO_{\mu;\nu}}$.  It then follows from Lemma~\ref{lem:dim-norb} that
\begin{equation}
\label{eq:apply-lem-2.4}
d_{(r_i)} + n(\lambda) - n(\rho^{(t)}_\xi + \sigma^{(t)}_\xi) + |\rho^{(t)}_\xi| \le d_{(r_i)} + |\mu| = d_{(r_i),I}.
\end{equation}
Combining \eqref{eq:apply-prop-6.1} and \eqref{eq:apply-lem-2.4}, we deduce part~(1) of Conjecture~\ref{conj:strata}.

To prove part~(2) of Conjecture~\ref{conj:strata}, suppose that $\xi\in\Xi$ is such that $\dim \Lambda_{(r_i),I}^\xi = d_{(r_i),I}$. Then equality must hold in \eqref{eq:apply-lem-2.4}, our application of Lemma~\ref{lem:dim-norb}, which implies (see Remark~\ref{rmk:lem-inequality}) that $(\rho_\xi^{(t)};\sigma_\xi^{(t)})$ is obtained from $(\mu;\nu)$ by a sequence of type (4) moves. However, the assumption on the column lengths of $\mu$ and $\nu$ makes a type (4) move from $(\mu;\nu)$ impossible, and we conclude that $(\rho_\xi^{(t)};\sigma_\xi^{(t)})=(\mu;\nu)$. In particular, the number of rows of $\xi_{t-1}$ containing a `$+$' box is $\ell(\mu+\nu)=\ell(\mu)=r_{t-1}$. But $r_{t-1}=\dim U_t-\dim U_{t-1}$ is also the difference between the number of `$+$' boxes and the number of `$-$' boxes. Hence $\xi_{t-1}$ must be the signed quasibipartition obtained by labelling every box of the diagram of $(\mu;\nu)$ as `$+$', and then inserting a `$-$' box between any two adjacent `$+$' boxes in the same row (there is no ambiguity about the position of `$-$' boxes adjacent to the wall, since by definition every box immediately left of the wall must be `$+$'). From this, one deduces that $(\rho_\xi^{(t-1)};\sigma_\xi^{(t-1)})=(\mu';\nu)$, where $\mu'$ is obtained from $\mu$ by deleting the longest column. See the top half of Figure~\ref{fig:simul-degen} below for an example of a triple $(\mu;\nu),\xi_{t-1},(\mu';\nu)$ of this form (where $\xi_{t-1}$ is the top signed quasibipartition).

Repeating this argument, we obtain for all $j\geq s$ that $\sigma_\xi^{(j)}=\nu$ and $\rho_\xi^{(j)}$ is obtained from $\mu$ by deleting the $t-j$ longest columns. Moreover, for all $j\geq s$, $\xi_j$ is uniquely determined: it must be the signed quasibipartition obtained by labelling every box of the diagram of $(\rho_\xi^{(j+1)};\sigma_\xi^{(j+1)})$ as `$+$'  and then inserting `$-$' boxes as before. In particular, every row begins and ends with `$+$', and the column of `$+$' boxes immediately left of the wall is longer than the column of `$-$' boxes immediately right of the wall, as required by Lemma~\ref{lem:signed-diagram-interpretation}(1). The corresponding statements for $j<s$ follow in exactly the same way, where now, because $\rho_\xi^{(j)}=\varnothing$ for $j\leq s$, we have reverted to the unenhanced case as in Kraft and Procesi's proof of Theorem~\ref{thm:kpstrata}(2): one finds that $\sigma_\xi^{(j)}$ is obtained from $\nu$ by deleting the $s-j$ longest columns, and $\xi_j$ is obtained by labelling every box of $\sigma_\xi^{(j+1)}$ as `$-$' and then inserting `$+$' boxes (there are, of course, no boxes to the left of the wall). In particular, every row begins and ends with `$-$', as required by Lemma~\ref{lem:signed-diagram-interpretation}(1). As in \cite[Section 5.4]{kp}, it is easy to see that this is the only sequence of compatible signed quasibipartitions which satisfies the properties required by Lemma~\ref{lem:signed-diagram-interpretation}(1), so this stratum does have the description claimed in part~(2) of Conjecture~\ref{conj:strata}. In view of that description, it follows immediately from Theorem~\ref{thm:kpstrata}(2) that this stratum indeed has dimension $d_{(r_i),I}$. This completes the proof of part~(2) of Conjecture~\ref{conj:strata}; in fact, we now know the extra information that the only stratum for which equality holds in \eqref{eq:apply-lem-2.4} is the stratum we have just described.

To prove part~(3) of Conjecture~\ref{conj:strata}, let $\xi\in\Xi$ be such that $\dim \Lambda_{(r_i),I}^\xi = d_{(r_i),I} - 1$. Then equality cannot hold in \eqref{eq:apply-lem-2.4}, so it must be that equality holds in \eqref{eq:apply-prop-6.1} and fails by exactly 1 in \eqref{eq:apply-lem-2.4}. We aim to prove by induction on $t$ that this implies the conditions required by Lemma~\ref{lem:signed-diagram-interpretation}(2).
 
Let $\xi' = (\xi_0, \cdots, \xi_{t-2})$, $(r'_i) = (r_0, \cdots, r_{t-2})$, and $I' = \{s,s+1, \cdots, t-2\}$. (Note that $I'$ is empty if $s=t-1$.) The pair $((r_i'),I')$ corresponds to the bipartition $(\mu';\nu)$, where $\mu'$, as above, is obtained by deleting the longest column of $\mu$. From the proof of Proposition~\ref{prop:dimbound} it is clear that equality in \eqref{eq:apply-prop-6.1} forces the corresponding equality:
\begin{equation}
\dim \Lambda^{\xi'}_{(r_i'),I'}=d_{(r_i')}+n(\mu'+\nu)-n(\rho_\xi^{(t-1)}+\sigma_\xi^{(t-1)})+|\rho_\xi^{(t-1)}|.
\end{equation}
So if we can show that equality fails by 1 in the analogue of \eqref{eq:apply-lem-2.4}, i.e.\ that
\begin{equation} \label{eqn:failureby1}
d_{(r_i')}+n(\mu'+\nu)-n(\rho_\xi^{(t-1)}+\sigma_\xi^{(t-1)})+|\rho_\xi^{(t-1)}|=d_{(r_i')}+|\mu'|-1,
\end{equation}
then we will know by the induction hypothesis (or, in the case $s=t-1$, by Theorem~\ref{thm:kpstrata}(3)) that the conditions in Lemma~\ref{lem:signed-diagram-interpretation}(2) hold for all $j\leq t-2$. Alternatively, if we can show that equality holds in the analogue of \eqref{eq:apply-lem-2.4}, i.e.\ that
\begin{equation} \label{eqn:holding}
d_{(r_i')}+n(\mu'+\nu)-n(\rho_\xi^{(t-1)}+\sigma_\xi^{(t-1)})+|\rho_\xi^{(t-1)}|=d_{(r_i')}+|\mu'|,
\end{equation}
then we will know by the above proof of Conjecture~\ref{conj:strata}(2) (or, in the case $s=t-1$, by Theorem~\ref{thm:kpstrata}(2)) that the conditions in Lemma~\ref{lem:signed-diagram-interpretation}(1) hold for all $j\leq t-2$; clearly these are even stronger than those in Lemma~\ref{lem:signed-diagram-interpretation}(2). Assuming either of these eventualities, if we can also show that the condition in Lemma~\ref{lem:signed-diagram-interpretation}(2) holds for $j=t-1$, then we will have completed the induction step.

Now by Remark~\ref{rmk:lem-inequality} and the assumption on column-lengths of $\mu$ and $\nu$, saying that equality fails by exactly 1 in \eqref{eq:apply-lem-2.4} is equivalent to saying that $(\rho_\xi^{(t)};\sigma_\xi^{(t)})$ is obtained from $(\mu;\nu)$ by one of the following moves:
\begin{itemize}
\item a type-(1) move in which the box moves down a single row;
\item a type-(2) move in which the box moves down a single row; or
\item a type-(3) move in which a single box in row $\ell(\nu)+1$ moves from the bottom of a column of $\mu$ (necessarily of minimal length) to the bottom of a column of $\nu$ (necessarily of maximal length),
\end{itemize}
possibly followed, in the type-(3) case, by at most two type-(4) moves which have now become possible (because the type-(3) move has disrupted the property that all columns of $\mu$ are strictly longer than all columns of $\nu$). We now have various cases to consider. Recall that $\ell(\mu)=r_{t-1}$, so $\ell(\rho_\xi^{(t)}+\sigma_\xi^{(t)})$ is either $r_{t-1}$ or $r_{t-1}+1$.

\textbf{Case 1:} $\ell(\rho_\xi^{(t)})=\ell(\rho_\xi^{(t)}+\sigma_\xi^{(t)})=r_{t-1}$. This implies that the number of rows of $\xi_{t-1}$ containing a `$+$' box equals $r_{t-1}$, which as above forces $\xi_{t-1}$ to be the signed quasibipartition obtained by labelling every box of the diagram of $(\rho_\xi^{(t)};\sigma_\xi^{(t)})$ as `$+$', and then inserting `$-$' boxes between adjacent `$+$' boxes. Since every row ends with a `$+$', the condition in Lemma~\ref{lem:signed-diagram-interpretation}(2) holds for $j=t-1$. Moreover, we see that $\sigma_\xi^{(t-1)}=\sigma_\xi^{(t)}$ and $\rho_\xi^{(t-1)}$ is obtained from $\rho_\xi^{(t)}$ by deleting the longest column. Hence $(\rho_\xi^{(t-1)}; \sigma_\xi^{(t-1)})$ is obtained from $(\mu';\nu)$ by the `same' move (or sequence of moves) which produced $(\rho_\xi^{(t)};\sigma_\xi^{(t)})$ from $(\mu;\nu)$ (for which the possibilities were described above). This implies \eqref{eqn:failureby1}, finishing this case. See Figure \ref{fig:simul-degen} for an example (where $\xi_{t-1}$ is the bottom signed quasibipartition).
\begin{figure}[ht]
\[
\xymatrix{
&
\begin{tableau}
\row{\cp\cm\cp\cm\rce{+}\cm\cp\cm\cp}
\row{\cp\cm\cp\cm\rce{+}}
\row{\cp\cm\cp\cm\rce{+}}
\row{\q\q\q\q\rce{+}}
\end{tableau}
\ar@/_/[dl]_{+} \ar@/^/[dr]^{-}
& \\
\begin{tableau}
\caprow{\q\q(\mu;\nu)}
\caprow{\q\q\q\q\q\q}
\row{\c\c\rc\c\c}
\row{\c\c\rc}
\row{\bce{\bullet}\c\rc}
\row{\q\q\rc}
\end{tableau}
\ar[d]_{\text{type (1)}}
& &
\begin{tableau}
\caprow{\sq(\mu';\nu)}
\caprow{\q}
\row{\c\rc\c\c}
\row{\c\rc}
\row{\bce{\bullet}\rc}
\end{tableau}
\ar[d]^{\text{type (1)}}
\\
\begin{tableau}
\row{\c\c\rc\c\c}
\row{\c\c\rc}
\row{\q\c\rc}
\row{\q\bce{\bullet}\rc}
\caprow{\q\q\q\q\q\q}
\caprow{(\rho_\xi^{(t)}; \sigma_\xi^{(t)})}
\end{tableau}
& &
\begin{tableau}
\row{\q\q\c\rc\c\c}
\row{\q\q\c\rc}
\row{\q\q\q\rc}
\row{\q\q\q\brce{\bullet}}
\caprow{\q\q\q\q\q}
\caprow{(\rho_\xi^{(t-1)}; \sigma_\xi^{(t-1)})}
\end{tableau}
\\
&
\begin{tableau}
\row{\cp\cm\cp\cm\rce{+}\cm\cp\cm\cp}
\row{\cp\cm\cp\cm\rce{+}}
\row{\q \q \cp\cm\rce{+}}
\row{\q \q \cp\cm\rce{+}}
\row{\q}
\end{tableau}
\ar@/^/[ul]^{+} \ar@/_/[ur]_{-}
& \\
} 
\]
\caption{Simultaneous degeneration of subordinate biparitions}
\label{fig:simul-degen}
\end{figure}
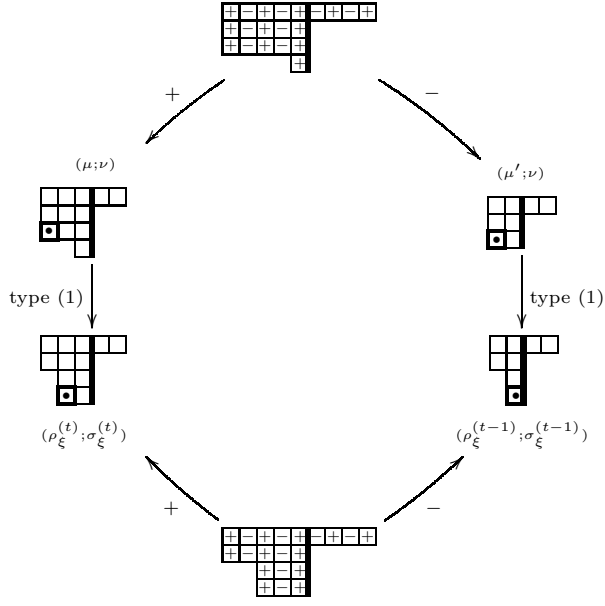

\textbf{Case 2:} $\ell(\rho_\xi^{(t)})<\ell(\rho_\xi^{(t)}+\sigma_\xi^{(t)})=r_{t-1}$. This case occurs if and only if $\mu$ has a single column (so $s=t-1$), $\ell(\nu)=r_{t-1}-1$, and to form $(\rho_\xi^{(t)};\sigma_\xi^{(t)})$ we make a type-(3) move. It is still true that the number of rows of $\xi_{t-1}$ containing a `$+$' box equals $r_{t-1}$, so as in Case 1, $\xi_{t-1}$ is the signed quasibipartition obtained by labelling every box of the diagram of $(\rho_\xi^{(t)};\sigma_\xi^{(t)})$ as `$+$', and then inserting `$-$' boxes between adjacent `$+$' boxes. Once again, every row ends with a `$+$'. Since $\rho_\xi^{(t-1)}$ is empty and $\sigma_\xi^{(t-1)}=\nu$, \eqref{eqn:holding} holds, finishing this case.

\textbf{Case 3:} $\ell(\rho_\xi^{(t)}+\sigma_\xi^{(t)})=r_{t-1}+1$ and $|\rho_\xi^{(t)}|=|\mu|$. This case occurs when $\mu$ has more than one column of length $r_{t-1}$ (i.e.\ $\mu_{r_{t-1}}\geq 2$), and to form $(\rho_\xi^{(t)};\sigma_\xi^{(t)})$ we move the corner box in row $r_{t-1}$ down to row $r_{t-1}+1$ (this is either a type-(1) move, or, if it happens that $\ell(\nu)=r_{t-1}-1$, a type-(3) move followed by a type-(4) move). That is, $\sigma_\xi^{(t)}=\nu$ and $\rho_\xi^{(t)}=\widetilde{\mu}$, where $\widetilde{\mu}$ is the partition $(\mu_1,\mu_2,\cdots,\mu_{r_{t-1}-1},\mu_{r_{t-1}}-1,1)$. In this case the number of rows of $\xi_{t-1}$ containing a `$+$' box equals $r_{t-1}+1$, so there is apparently more freedom in the choice of $\xi_{t-1}$: after labelling every box of $(\widetilde{\mu};\nu)$ as `$+$', and then inserting `$-$' boxes between adjacent `$+$' boxes, we still have an additional `$-$' box to place.

The possibilities are constrained, however, by the analogue of \eqref{eqn:dominance}, which ensures that $(\rho_\xi^{(t-1)}; \sigma_\xi^{(t-1)}) \le (\mu';\nu)$. The forced `$-$' boxes in $\xi_{t-1}$ have the shape of the bipartition $(\widetilde{\mu}';\nu)$, where $\widetilde{\mu}'$ is obtained from $\mu'$ by deleting the corner box in row $r_{t-1}$. Hence $(\rho_\xi^{(t-1)}; \sigma_\xi^{(t-1)})$ either equals $(\mu';\nu)$, or is obtained from $(\mu';\nu)$ by moving this same corner box to row $r_{t-1}$ on the $\nu$ side (a type-(3) move) or to row $r_{t-1}+1$ on the $\mu'$ side (a type-(1) move, or a type-(3) move followed by a type-(4) move). If $(\rho_\xi^{(t-1)}; \sigma_\xi^{(t-1)})=(\mu';\nu)$, then \eqref{eqn:holding} holds. If a move is required, then \eqref{eqn:failureby1} holds. What remains, for this case, is to verify the condition in Lemma~\ref{lem:signed-diagram-interpretation}(2) for $j=t-1$.  

Suppose that $\ell(\rho_\xi^{(t-1)}+\sigma_\xi^{(t-1)})=r_{t-1}$, which is equivalent to saying that the additional `$-$' box in $\xi_{t-1}$ is in row $r_{t-1}$. Then row $r_{t-1}+1$ of $\xi_{t-1}$ consists of a single `$+$' box immediately left of the wall; moreover, it is either true that every row of $\xi_{t-1}$ begins with a `$+$' or that every row of $\xi_{t-1}$ ends with a `$+$', since the additional `$-$' box cannot falsify both statements. So in this event, we are finished. 

The other possibility is that $(\rho_\xi^{(t-1)}; \sigma_\xi^{(t-1)})$ is obtained from $(\mu';\nu)$ by moving the corner box in row $r_{t-1}$ on the $\mu'$ side to row $r_{t-1}+1$ on the $\mu'$ side. Here we observe that since equality holds in \eqref{eq:apply-prop-6.1}, there cannot be any rearrangement of parts in forming the subordinate bipartition $(\rho_\xi^{(t-1)}; \sigma_\xi^{(t-1)})$ from $\xi_{t-1}$ (see the comment following \eqref{eqn:dim-np}). So it is not possible that the additional `$-$' box in $\xi_{t-1}$ is in a row by itself (immediately right of the wall), following row $r_{t-1}+1$, which consists of a single `$+$' box (immediately left of the wall). Hence the additional `$-$' box must be in row $r_{t-1}+1$, either before or after the single `$+$' box. Again, it follows that either every row of $\xi_{t-1}$ begins with a `$+$' or every row of $\xi_{t-1}$ ends with a `$+$'. If the additional `$-$' box comes before the `$+$' box in row $r_{t-1}+1$, then that row ends with a box immediately left of the wall. If the additional `$-$' box comes after the `$+$' box, and therefore right of the wall, then the fact that it is brought to the left of the wall in forming $(\rho_\xi^{(t-1)}; \sigma_\xi^{(t-1)})$ implies that there is a row of $\xi_{t-1}$ which contains no `$-$' boxes right of the wall, and therefore ends with a box immediately left of the wall. So the condition in Lemma~\ref{lem:signed-diagram-interpretation}(2) holds.

\textbf{Case 4:} $\ell(\rho_\xi^{(t)}+\sigma_\xi^{(t)})=r_{t-1}+1$ and $|\rho_\xi^{(t)}|\neq|\mu|$. This case occurs when all columns of $\mu$ have length $r_{t-1}$, $\ell(\nu)=r_{t-1}-1$, and to form $(\rho_\xi^{(t)};\sigma_\xi^{(t)})$ from $(\mu;\nu)$ we move the corner box in row $r_{t-1}-1$ on the $\nu$ side down to row $r_{t-1}+1$ on the $\mu$ side (this is a type-(3) move followed by two type-(4) moves, or a type-(3) move followed by a single type-(4) move of two boxes). As in the previous case, after labelling every box of $(\rho_\xi^{(t)};\sigma_\xi^{(t)})$ as `$+$', and then inserting `$-$' boxes between adjacent `$+$' boxes, we have to place one additional `$-$' box to form $\xi_{t-1}$. Again, the possibilities are constrained by the fact that $(\rho_\xi^{(t-1)}; \sigma_\xi^{(t-1)}) \le (\mu';\nu)$. The forced `$-$' boxes in $\xi_{t-1}$ have the shape of the bipartition $(\mu';\widetilde{\nu})$, where $\widetilde{\nu}$ is obtained from $\nu$ by deleting the corner box in row $r_{t-1}-1$. Hence $(\rho_\xi^{(t-1)}; \sigma_\xi^{(t-1)})$ either equals $(\mu';\nu)$, or is obtained from $(\mu';\nu)$ by moving this same corner box to row $r_{t-1}$ on the $\nu$ side or to row $r_{t-1}+1$ on the $\mu'$ side. If $(\rho_\xi^{(t-1)}; \sigma_\xi^{(t-1)})=(\mu';\nu)$, then \eqref{eqn:holding} holds. If a move is required, then \eqref{eqn:failureby1} holds. The verification that the condition in Lemma~\ref{lem:signed-diagram-interpretation}(2) holds for $j=t-1$ is almost identical to the previous case.
\end{proof}

%

To conclude this section, here is a list of the cases of Conjecture~\ref{conj:enhnorm} whose proof is now complete.

\begin{cor} \label{cor:main}
The enhanced nilpotent orbit closure $\overline{\cO_{\mu;\nu}}$ is normal in the following cases:
\begin{enumerate}
\item $n \le 6$;
\item $\mu_1^\bt\leq\nu_{\nu_1}^\bt$ \textup{(}i.e., every column of the diagram of $\nu$ is at least as long as every column of $\mu$\textup{)};
\item $\mu_{\mu_1}^\bt>\nu_{1}^\bt$ \textup{(}i.e., every column of $\mu$ is longer than every column of $\nu$\textup{)}.
\end{enumerate}
\end{cor}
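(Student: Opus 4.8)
The proof of Corollary~\ref{cor:main} is a matter of assembling the implications already established and citing the appropriate special cases. The plan is to verify that each of the three hypotheses puts us in a situation where Conjecture~\ref{conj:strata} is known, and then to feed this through the chain of implications recorded in Figure~\ref{fig:conj}.

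For part~(1), the hypothesis $n \le 6$ is precisely the range in which the authors' computer verification of Conjecture~\ref{conj:strata} applies. By Conjecture~\ref{conj:strata} $\Rightarrow$ Conjecture~\ref{conj:dimbound} (which is immediate, since part~(1) of Conjecture~\ref{conj:strata} is exactly the statement that the top-dimensional strata have dimension $d_{(r_i),I}$, and parts~(2),(3) pin down where dimension $d_{(r_i),I}$ and $d_{(r_i),I}-1$ are attained, forcing $\dim(\Lambda_{(r_i),I}\smallsetminus\Lambda^\circ)\le d_{(r_i),I}-2$), Theorem~\ref{thm:nonsingrel} then gives that $\Lambda_{(r_i),I}$ is normal, and Theorem~\ref{thm:normrel} gives normality of $\overline{\cO_{\mu;\nu}}$. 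For part~(3), the hypothesis $\mu_{\mu_1}^\bt > \nu_1^\bt$ says every column of $\mu$ is strictly longer than every column of $\nu$; in the indexing conventions of Section~\ref{sect:eqv} this means exactly that $I$ is an up-set of the form $\{s, s+1, \ldots, t-1\}$. If $s \ge 1$ (equivalently $\nu \ne \varnothing$), this is the hypothesis of Theorem~\ref{thm:main}, so Conjecture~\ref{conj:strata} holds, and we again run the chain of implications. If $s = 0$ (equivalently $\nu = \varnothing$), then $I = \{0,1,\ldots,t-1\}$, which is the case covered by Theorem~\ref{thm:easy}: there $\Lambda_{(r_i),I} \cong U_t \times \Lambda_{(r_i)}$ is directly shown to be a normal complete intersection of the expected dimension, so Theorem~\ref{thm:normrel} applies.

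Part~(2) is the genuinely ``dual'' case: $\mu_1^\bt \le \nu_{\nu_1}^\bt$ says every column of $\nu$ is at least as long as every column of $\mu$. One subtlety is that this is not literally the $I = \varnothing$ situation — $\mu$ need not be empty — nor is it literally covered by Theorem~\ref{thm:easy} as stated, since $I = \{0,1,\ldots,s-1\}$ in that theorem corresponds to the columns of $\mu$ being the \emph{shortest} ones, which requires the bipartition to be assembled from the $(r_i)$ with $I$ a down-set. The point is that the condition $\mu_1^\bt \le \nu_{\nu_1}^\bt$ is exactly what is needed for the subset $I$ determined by $(\mu;\nu)$ (via the tie-breaking rule of Section~\ref{sect:eqv}) to be the down-set $\{0,1,\ldots,|\mu^\bt|-1\}$: the columns of $\mu$, being no longer than any column of $\nu$, occupy the initial segment of the weakly increasing sequence $(r_i)$, and the tie-breaking rule ($r_i = r_{i+1}$ and $i+1 \in I$ forces $i \in I$) ensures that among columns of equal length the $\mu$-columns come first, so $I$ is indeed an initial segment. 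Hence Theorem~\ref{thm:easy} applies: $\Lambda_{(r_i),I}$ is a normal complete intersection, and Theorem~\ref{thm:normrel} finishes the job.

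The main thing to get right is the bookkeeping in the previous paragraph: one must check carefully that the combinatorial hypothesis $\mu_1^\bt \le \nu_{\nu_1}^\bt$ really does force $I$ to be an initial segment of $\{0,\ldots,t-1\}$ in the conventions fixed at the start of Section~\ref{sect:eqv}, including the borderline behavior when $\mu$ and $\nu$ share columns of equal length (this is exactly where the tie-breaking rule in the definition of $I$ matters, and it has been chosen to make $\mu$-columns precede $\nu$-columns of the same length). Everything else is a direct citation: parts~(1) and~(3) invoke the computer verification and Theorems~\ref{thm:easy} and~\ref{thm:main} respectively, and all three parts then pass through Theorems~\ref{thm:nonsingrel} (for part~(1)), \ref{thm:easy}, \ref{thm:main}, and finally \ref{thm:normrel}, as summarized in Figure~\ref{fig:conj}. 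I do not expect any serious obstacle; the corollary is, by design, the bottom line of the implication diagram, and the work has all been done in the preceding sections.
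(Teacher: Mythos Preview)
Your proposal is correct and follows exactly the same approach as the paper's proof, which simply cites the computer verification, Theorem~\ref{thm:easy}, and Theorem~\ref{thm:main} for the three cases respectively and then invokes Theorem~\ref{thm:normrel}. Your write-up is more explicit about the bookkeeping (translating the column-length hypotheses into the form of $I$, and handling the $\nu=\varnothing$ edge case in part~(3)); the only slip is that in part~(2) you write $|\mu^\bt|$ where you mean $\mu_1$ (the number of columns of $\mu$), but the argument is otherwise sound.
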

\begin{proof}
These are the bipartitions corresponding to the enhanced quiver varieties which are proved to be normal by, respectively, our computer verifications of Conjecture~\ref{conj:strata}, Theorem~\ref{thm:easy}, and Theorem~\ref{thm:main}.
\end{proof}
\section{Regularity in Codimension~$1$}
\label{sect:r1}

In this last section, we prove that every enhanced nilpotent orbit closure is regular in codimension~$1$.  Of course, this would be an immediate consequence of Conjecture~\ref{conj:enhnorm}. However, the results in this section hold over any algebraically closed field $\F$, unlike the proposed method of proof of Conjecture~\ref{conj:enhnorm}, which requires $\F$ to be $\C$. 


We first prove the smoothness of certain unions of orbits in the enhanced nilpotent cone $V\times\cN$. As before, $n$ denotes the dimension of $V$.
If $\lambda$ is a partition of $n$, we write $U_\lambda$ for $V\times\cO_\lambda$, which is a locally closed subvariety of $V\times\cN$. Let $\cQ_\lambda$ be the subset of $\cQ_n$ consisting of bipartitions $(\mu;\nu)$ such that $\mu+\nu=\lambda$.

\begin{prop} \label{prop:union1} 
Let $\lambda$ be any partition of $n$.
\begin{enumerate}
\item $U_\lambda$ is the union of the orbits $\cO_{\mu;\nu}$ for $(\mu;\nu)\in\cQ_\lambda$.
\item For $(\rho;\sigma),(\mu;\nu)\in\cQ_\lambda$, $\cO_{\rho;\sigma}\subset\overline{\cO_{\mu;\nu}}$ if and only if $\rho_i\leq\mu_i$ for all $i$.
\item For $(\mu;\nu)\in\cQ_\lambda$,
\[ \overline{\cO_{\mu;\nu}}\cap U_\lambda=\{(v,x)\in U_\lambda\,|\,x^{\mu_i}v\in\im(x^{\lambda_i}),\text{ for all }i\}. \]
\item In $U_\lambda$, the closure of each orbit is smooth; that is, for every $(\mu;\nu)\in\cQ_\lambda$, $\overline{\cO_{\mu;\nu}}\cap U_\lambda$ is smooth.
\end{enumerate}
\end{prop}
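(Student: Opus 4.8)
The plan is to treat the four parts in order; parts~(1) and~(2) are short deductions from results already recalled, and the content of~(3) and~(4) will be packaged into one explicit, $GL(V)$-equivariant description of $\overline{\cO_{\mu;\nu}}\cap U_\lambda$. For~(1): by~\eqref{eqn:bipartitionsum} each orbit $\cO_{\mu;\nu}$ lies in $(\bar\pi^V)^{-1}(\cO_{\mu+\nu})=U_{\mu+\nu}$, and since the $\cO_{\mu;\nu}$ with $(\mu;\nu)\in\cQ_n$ partition $V\times\cN$ while $U_\lambda=(\bar\pi^V)^{-1}(\cO_\lambda)$, the orbits contained in $U_\lambda$ are exactly those with $\mu+\nu=\lambda$. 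For~(2): by Lemma~\ref{lem:enhcl-crit}, $\cO_{\rho;\sigma}\subset\overline{\cO_{\mu;\nu}}$ iff $(\rho;\sigma)\le(\mu;\nu)$ in the order on $\cQ_n$; when $\rho+\sigma=\mu+\nu=\lambda$ the first family of defining inequalities becomes a family of equalities and the second collapses to $\rho_{k+1}\le\mu_{k+1}$ for all $k\ge0$, i.e.\ to $\rho_i\le\mu_i$ for all $i$.

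For~(3) and~(4) I would fix $(v,x)\in U_\lambda$, so that $x$ has Jordan type $\lambda=\mu+\nu$, and introduce the $x$-stable subspace
\[
S_x:=\bigcap_{i\ge1}\bigl(\im(x^{\nu_i})+\ker(x^{\mu_i})\bigr)\subset V .
\]
Since $\im(x^{\lambda_i})=\im(x^{\mu_i+\nu_i})=x^{\mu_i}\bigl(\im(x^{\nu_i})\bigr)$, the condition $x^{\mu_i}v\in\im(x^{\lambda_i})$ is equivalent to $v\in\im(x^{\nu_i})+\ker(x^{\mu_i})$, so the right-hand set in~(3) is precisely $\{(v,x)\in U_\lambda:v\in S_x\}$. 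The key step is to prove that for \emph{every} decomposition $V=\bigoplus_jV^{(j)}$ of $V$ into Jordan blocks for $x$ (so $\dim V^{(j)}=\lambda_j$) one has $S_x=\bigoplus_jx^{\nu_j}V^{(j)}$. Granting this, $S_x$ is $x$-stable of dimension $\sum_j(\lambda_j-\nu_j)=|\mu|$, with $x|_{S_x}$ of Jordan type $\mu$ and $x|_{V/S_x}$ of Jordan type $\nu$; hence if $v\in S_x$ then $W:=S_x$ meets the hypotheses of Lemma~\ref{lem:enhcl-crit}(3), giving $(v,x)\in\overline{\cO_{\mu;\nu}}$, while conversely any $(v,x)\in\cO_{\mu;\nu}$ has $v$ equal to the sum of the boxes immediately left of the wall in a normal basis, which lies in $\bigoplus_jx^{\nu_j}V^{(j)}=S_x$ for the block decomposition given by the rows. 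As the conditions $x^{\mu_i}v\in\im(x^{\lambda_i})$ are closed on $U_\lambda$ (where each $x^{\lambda_i}$ has constant rank), this yields~(3). Part~(4) then follows: $\cO_\lambda$ is a homogeneous space, hence smooth, and the family $\{S_x\}_{x\in\cO_\lambda}$ is assembled from the images and kernels of the constant-rank endomorphisms $x\mapsto x^m$ of the trivial bundle $\cO_\lambda\times V$; the displayed identity makes the relevant sums and their intersection $S_x$ of locally constant dimension, so $\{S_x\}$ is a vector subbundle of $\cO_\lambda\times V$ and $\overline{\cO_{\mu;\nu}}\cap U_\lambda$, being its total space, is smooth.

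The main obstacle is the identity $S_x=\bigoplus_jx^{\nu_j}V^{(j)}$. Everything in sight is graded for the chosen decomposition: on the $j$-th block $V^{(j)}$, where $x$ is a single Jordan block of size $\lambda_j$, the $x$-stable subspaces are the $x^mV^{(j)}$ with $0\le m\le\lambda_j$, and a direct computation yields $\bigl(\im(x^{\nu_i})+\ker(x^{\mu_i})\bigr)\cap V^{(j)}=x^{c_i(j)}V^{(j)}$ with $c_i(j)=\min\bigl(\nu_i,\max(\lambda_j-\mu_i,0)\bigr)$, whence $S_x=\bigoplus_jx^{\max_ic_i(j)}V^{(j)}$. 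For the inclusion $\bigoplus_jx^{\nu_j}V^{(j)}\subseteq S_x$ I would check, for each $i$ and $j$: if $\nu_i\le\nu_j$ then $x^{\nu_j}V^{(j)}\subseteq x^{\nu_i}V^{(j)}\subseteq\im(x^{\nu_i})$, while if $\nu_i>\nu_j$ then $\mu_i\ge\mu_j$, so $x^{\nu_j}V^{(j)}=\ker(x^{\mu_j}|_{V^{(j)}})\subseteq\ker(x^{\mu_i})$. For the reverse inclusion it suffices to observe that $c_j(j)=\min(\nu_j,\nu_j)=\nu_j$, so $\max_ic_i(j)\ge\nu_j$ and $\dim S_x\le\sum_j(\lambda_j-\nu_j)=|\mu|$, matching the dimension of the subspace just shown to lie inside $S_x$. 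The only facts about the bipartition used here — that $\mu$ and $\nu$, indexed compatibly with $\lambda=\mu+\nu$, are both weakly decreasing, hence constant on the blocks where $\lambda$ is constant and satisfying $\nu_i>\nu_j\Rightarrow\mu_i\ge\mu_j$ — are automatic from $\mu$, $\nu$, $\mu+\nu$ all being partitions, so no separate combinatorial lemma is needed; the real work is just this block-by-block bookkeeping.
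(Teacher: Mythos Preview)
Your proof is correct and close in spirit to the paper's, but the argument for part~(3) takes a slightly different path. The paper proceeds more directly: using a normal basis for $(v,x)\in\cO_{\rho;\sigma}$ with $\rho+\sigma=\lambda$, it computes $\min\{s\mid x^s v\in\im(x^{\lambda_i})\}=\rho_i$ for each $i$, and then part~(3) follows immediately from part~(2). You instead identify the fibre $S_x$ explicitly as $\bigoplus_j x^{\nu_j}V^{(j)}$ via block-by-block analysis, and then use Lemma~\ref{lem:enhcl-crit}(3) (with $W=S_x$) for the inclusion $\{v\in S_x\}\subset\overline{\cO_{\mu;\nu}}$ rather than reducing to part~(2). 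Both routes arrive at the same vector-bundle picture for~(4); the paper records your block formula only as the parenthetical remark that the fibre equals $\sum_i x^{\nu_i}(\ker(x^{\lambda_i}))$. The paper's route is shorter, but yours makes the structure of the fibre more transparent and verifies its dimension independently, which gives a cleaner justification of the constant-rank claim underlying~(4).
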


\begin{proof}
Part (1) is equivalent to \eqref{eqn:bipartitionsum}. Part (2) follows from Lemma~\ref{lem:enhcl-crit}. Using the normal basis given in Lemma~\ref{lem:enh-param}, one sees that if $(v,x)\in\cO_{\mu;\nu}\subset U_\lambda$, then for all $i$,
\[ \min\{s\,|\,x^s v\in\im(x^{\lambda_i})\}=\mu_i. \]
Combining this with part (2), we deduce part (3). From this part (4) follows, because the projection $(v,x)\mapsto x$ exhibits $\overline{\cO_{\mu;\nu}}\cap U_\lambda$ as a vector bundle over $\cO_\lambda$, where the fibre over $x$ is the vector subspace $\bigcap_i (x^{\mu_i})^{-1}(\im(x^{\lambda_i}))$ of $V$. (Incidentally, this subspace can alternatively be described as $\sum_i x^{\nu_i}(\ker(x^{\lambda_i}))$.)
\end{proof}

If $0\leq m\leq n$ and $\pi$ is a partition of $n-m$, define 
\[ U_{m,\pi}=\{(v,x)\in V\times\cN\,|\,\dim\F[x]v=m,\, x|_{V/\F[x]v}\in\cO_\pi\}, \]
which is a locally closed subvariety of $V\times\cN$. Here $\F[x]v$ is the span of the elements $x^i v$ for all $i$, which is obviously an $x$-stable subspace of $V$; since $x$ is nilpotent, to say that $\dim\F[x]v=m$ is to say that $m$ is minimal such that $x^m v=0$. 

Let $\cQ_{m,\pi}$ be the subset of $\cQ_n$ consisting of bipartitions $(\mu;\nu)$ such that $\mu_1=m$ and $\mu[1]+\nu=\pi$, where $\mu[1]$ denotes the partition $(\mu_2,\mu_3,\cdots)$. The map $(\mu;\nu)\mapsto\mu+\nu$ gives a bijection 
\[ \cQ_{m,\pi}\longleftrightarrow\{\lambda\in\cP_n\,|\,\lambda_1\geq\pi_1\geq\lambda_2\geq\pi_2\geq\cdots\}. \]

\begin{prop} \label{prop:union2}
Let $m$ and $\pi$ be as above. 
\begin{enumerate}
\item $U_{m,\pi}$ is the union of the orbits $\cO_{\mu;\nu}$ for $(\mu;\nu)\in\cQ_{m,\pi}$.
\item For $(\rho;\sigma),(\mu;\nu)\in\cQ_{m,\pi}$, $\cO_{\rho;\sigma}\subset\overline{\cO_{\mu;\nu}}$ if and only if $\sigma_i\leq\nu_i$ for all $i$, which in turn happens if and only if $\cO_{\rho+\sigma}\subset\overline{\cO_{\mu+\nu}}$.
\item For $(\mu;\nu)\in\cQ_{m,\pi}$,
\[
\overline{\cO_{\mu;\nu}}\cap U_{m,\pi}=\{(v,x)\in U_{m,\pi}\,|\,x^{m+\nu_i}((x^{\pi_i})^{-1}(\F[x]v))=0,\text{ for all }i\}.
\]
\item In $U_{m,\pi}$, the closure of each orbit is smooth; that is, for every $(\mu;\nu)\in\cQ_{m,\pi}$, $\overline{\cO_{\mu;\nu}}\cap U_{m,\pi}$ is smooth.
\end{enumerate}
\end{prop}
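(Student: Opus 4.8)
The plan is to follow the structure of the proof of Proposition~\ref{prop:union1}. For parts~(1) and~(2) the key is the normal-basis description of Lemma~\ref{lem:enh-param}: if $(v,x)\in\cO_{\rho;\sigma}$ and we choose a normal basis, then $x^s v=\sum_{j:\,\rho_j>s}e_{(j,\rho_j-s)}$, from which one reads off $\dim\F[x]v=\rho_1$, that $x|_{\F[x]v}$ is regular nilpotent, and that the Jordan type of $x|_{V/\F[x]v}$ is $\rho[1]+\sigma$. Since $U_{m,\pi}$ is $GL(V)$-stable, this gives part~(1): $U_{m,\pi}$ is the union of the $\cO_{\rho;\sigma}$ with $\rho_1=m$ and $\rho[1]+\sigma=\pi$. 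For part~(2) I would feed the two relations $\rho_1=\mu_1=m$ and $\rho_{i+1}+\sigma_i=\mu_{i+1}+\nu_i=\pi_i$ defining $\cQ_{m,\pi}$ into the partial order of Lemma~\ref{lem:enhcl-crit}: the telescoping identity $\sum_{i=1}^{k}(\rho_i+\sigma_i)=m+\sum_{i=1}^{k-1}\pi_i+\sigma_k$ collapses the first family of inequalities to $\sigma_k\le\nu_k$ and makes the second family the automatic equality $\sigma_k+\rho_{k+1}=\nu_k+\mu_{k+1}$; the same identity applied to $\rho+\sigma$ versus $\mu+\nu$ shows via Lemma~\ref{lem:nilp-crit} that $\cO_{\rho+\sigma}\subseteq\overline{\cO_{\mu+\nu}}$ is equivalent to the same condition.

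For part~(3), abbreviate $W=\F[x]v$; since $x|_W$ is regular nilpotent of dimension $m$ we have $x^m W=0$ and $\ker(x^k|_W)=x^{m-k}W$. Using $\pi_i=\mu_{i+1}+\nu_i$ and $m\ge\mu_{i+1}$, every $u$ with $x^{\pi_i}u\in W$ satisfies $x^{m+\nu_i}u=x^{m-\mu_{i+1}}(x^{\pi_i}u)$, and $x^{\pi_i}u$ runs over $W\cap\im(x^{\pi_i})$ as $u$ runs over $(x^{\pi_i})^{-1}(W)$; hence
\[
x^{m+\nu_i}\bigl((x^{\pi_i})^{-1}(W)\bigr)=x^{m-\mu_{i+1}}\bigl(W\cap\im(x^{\pi_i})\bigr),
\]
which vanishes exactly when $W\cap\im(x^{\pi_i})\subseteq x^{\mu_{i+1}}W$. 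To finish, write $(v,x)\in\cO_{\rho;\sigma}$: since $W\cap\im(x^{\pi_i})$ is $x$-stable in $W$ it equals $x^{c_i}W$ with $c_i=\min\{s:x^s v\in\im(x^{\pi_i})\}$, and a normal-basis check — using that $\rho_j>\rho_{i+1}$ forces $\sigma_j\ge\sigma_i$, while $j=i$ obstructs $s=\rho_{i+1}-1$ — gives $c_i=\rho_{i+1}$. So the displayed condition reads $\rho_{i+1}\ge\mu_{i+1}$ for all $i$, i.e.\ $\sigma_i\le\nu_i$, i.e.\ $(v,x)\in\overline{\cO_{\mu;\nu}}$ by part~(2).

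For part~(4), part~(3) rewrites $\overline{\cO_{\mu;\nu}}\cap U_{m,\pi}$ as the set of $(v,x)\in U_{m,\pi}$ with $\F[x]v\cap\im(x^{\pi_i})\subseteq x^{\mu_{i+1}}(\F[x]v)$ for all $i$; note this condition involves $(\F[x]v,x)$ only. The plan is to build the variety as an iterated bundle. First project $GL(V)$-equivariantly to $\Gr_{m}(V)$ by $(v,x)\mapsto\F[x]v$; as $\Gr_{m}(V)$ is a single orbit it suffices to show the fibre over a fixed $W_0$ is smooth. That fibre maps equivariantly under $GL(W_0)\times GL(V/W_0)$, via $(v,x)\mapsto(x|_{W_0},x|_{V/W_0})$, to the product of the regular nilpotent orbit in $\gl(W_0)$ with $\cO_\pi\subseteq\cN_{V/W_0}$, again a single orbit, so it suffices to examine the fibre over a fixed $(y_0,\bar x_0)$. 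There the remaining data is a pair $(v,\phi)$ with $v\in W_0\smallsetminus\im(y_0)$ a cyclic vector for $y_0$ and $\phi\in\Hom(V/W_0,W_0)$ the off-diagonal block of $x$; expanding $x^{\pi_i}$ in block form gives $\im(x^{\pi_i})\cap W_0=\im(y_0^{\pi_i})+c_{\pi_i}(\phi)(\ker\bar x_0^{\pi_i})$ with $c_{\pi_i}$ linear in $\phi$, and since $\pi_i\ge\mu_{i+1}$ already forces $\im(y_0^{\pi_i})\subseteq\im(y_0^{\mu_{i+1}})$, the condition of part~(3) becomes a \emph{linear} condition on $\phi$ that does not involve $v$. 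So this innermost layer is $(W_0\smallsetminus\im y_0)\times E_0$ with $E_0$ a linear subspace, hence smooth; running back up through the two homogeneous fibrations (using that $G\times_H Z$ is smooth when $Z$ is) shows $\overline{\cO_{\mu;\nu}}\cap U_{m,\pi}$ is smooth and irreducible.

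The main obstacle I anticipate is part~(3): both the exponent bookkeeping in the reformulation and, especially, the identity $\F[x]v\cap\im(x^{\pi_i})=x^{\rho_{i+1}}(\F[x]v)$ require a careful computation of which boxes of the normal-basis diagram survive under powers of $x$, and this identity is precisely what links the explicit subspace description to the combinatorial closure criterion of part~(2). Granting part~(3), part~(4) is routine modulo notation; the one delicate point there is confirming that the part~(3) condition descends to $(\F[x]v,x)$ and is linear in the extension parameter $\phi$.
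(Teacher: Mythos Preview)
Your proposal is correct and follows essentially the same strategy as the paper's proof: parts~(1) and~(2) via the normal basis and the partial-order criterion, part~(3) via a normal-basis computation combined with part~(2), and part~(4) by fibering over the homogeneous space of triples $(W,y,z)$ and showing the fibre is an open set in $W$ times an affine-linear space. The paper packages the fibration in one step (to $Z_{m,\pi}$) rather than two, and for part~(3) it computes $\min\{s:x^{s}((x^{\pi_i})^{-1}(\F[x]v))=0\}=m+\nu_i$ directly rather than first rewriting the condition as $W\cap\im(x^{\pi_i})\subseteq x^{\mu_{i+1}}W$; your reformulation has the minor advantage that the linearity in $\phi$ needed for part~(4) is then visible without the separate observation that $x^{k}-x_{0}^{k}$ is linear in $x-x_{0}\in\fn^{W}$.
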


\begin{proof}
Part (1) is proved in \cite[Lemma 2.5]{ah}, and part (2) follows from Lemma~\ref{lem:enhcl-crit}. Using the basis defined in \cite[Lemma 2.5]{ah}, one sees that if $(v,x)\in\cO_{\mu;\nu}\subset U_{m,\pi}$, then for all $i$,
\[
\min\{s\,|\,x^s((x^{\pi_i})^{-1}(\F[x]v))=0\}=m+\nu_i.
\] 
(To verify that the minimum is at least $m+\nu_i$, note that the basis element $w_{i,\mu_i+\nu_i}$ belongs to $(x^{\pi_i})^{-1}(\F[x]v)$, and that $x^{m+\nu_i-1}(w_{i,\mu_i+\nu_i})\neq 0$.) Combining this with part (2), we deduce part (3).

It remains to prove part (4). Let $Z_{m,\pi}$ be the variety of triples $(W,y,z)$ where $W$ is an $m$-dimensional subspace of $V$, $y$ is a nilpotent endomorphism of $W$ with a single Jordan block, and $z$ is a nilpotent endomorphism of $V/W$ which belongs to $\cO_{\pi}$. It is clear that $Z_{m,\pi}$ is a homogenous variety for $GL(V)$. We have a $GL(V)$-equivariant fibre bundle 
\[ U_{m,\pi}\to Z_{m,\pi}:(v,x)\mapsto(\F[x]v,x|_{\F[x]v},x|_{V/\F[x]v}), \] 
in which the fibre over $(W,y,z)$ is $(W\setminus\ker(y^{m-1}))\times A_{W,y,z}$, where 
\[ A_{W,y,z}=\{x\in\gl(V)^W\,|\,x|_W=y,\,x|_{V/W}=z\}, \] 
an affine-linear subspace of $\gl(V)^W$ (the parabolic subalgebra of $\gl(V)$ stabilizing $W$). So it suffices to show that $\overline{A_{\mu;\nu}}=\overline{\cO_{\mu+\nu}}\cap A_{W,y,z}$ is smooth. By part (3),
\[
\overline{A_{\mu;\nu}}=\{x\in A_{W,y,z}\,|\,x^{m+\nu_i}(W+\ker(z^{\pi_i}))=0,\text{ for all }i\},
\]
where $W+\ker(z^{\pi_i})$ denotes the preimage of $\ker(z^{\pi_i})$ under the projection $V\to V/W$. Fixing a base-point $x_0\in A_{W,y,z}$, one has $A_{W,y,z}=x_0+\fn^W$ where $\fn^W$ is the nilpotent radical of $\gl(V)^W$. For any $k$, the matrix coefficients of $x^k-x_0^k$ are linear functions of $x-x_0\in\fn^W$, so the condition $x^{m+\nu_i}(W+\ker(z^{\pi_i}))=0$ translates into a linear condition on $x-x_0$. Hence $\overline{A_{\mu;\nu}}$ is an affine-linear subspace of $\gl(V)^W$, and is smooth as required. 
\end{proof}

\begin{thm}\label{thm:r1}
For every $(\mu;\nu)\in\cQ_n$, $\overline{\cO_{\mu;\nu}}$ is regular in codimension~$1$.
\end{thm}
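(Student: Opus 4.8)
The plan is to show that the singular locus of $\overline{\cO_{\mu;\nu}}$ has codimension at least $2$. This locus is $GL(V)$-stable and closed, hence a union of orbit closures, so (the open orbit $\cO_{\mu;\nu}$ being smooth) it suffices to prove that $\overline{\cO_{\mu;\nu}}$ is smooth at every point of every orbit $\cO_{\rho;\sigma}$ of codimension $1$ in $\overline{\cO_{\mu;\nu}}$. The key reduction is this: if $\cO_{\rho;\sigma}$ has codimension $1$, then no orbit lies strictly between $\cO_{\rho;\sigma}$ and $\cO_{\mu;\nu}$ in the closure order, so $(\rho;\sigma)$ is covered by $(\mu;\nu)$ and the pair arises from one of the covering relations listed in Section~\ref{sect:covering}; moreover the set $N := \cO_{\rho;\sigma} \cup \cO_{\mu;\nu}$ is open in $\overline{\cO_{\mu;\nu}}$, since its complement is a union of orbits closed under the closure order going down. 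Consequently, if $N$ is contained in some $GL(V)$-stable locally closed subvariety $S$ with $\overline{\cO_{\mu;\nu}} \cap S$ smooth, then $N$ is open in $\overline{\cO_{\mu;\nu}} \cap S$ and therefore smooth, whence $\overline{\cO_{\mu;\nu}}$ is smooth along $\cO_{\rho;\sigma}$.

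I would then identify the codimension-$1$ orbits using the formula $\dim \cO_{\mu;\nu} = d^2 - d - 2n(\mu+\nu) + |\mu|$ of~\eqref{eqn:dim-norb}. A covering relation of type (1) or (2) moves a box within one side of the wall, so $|\rho| = |\mu|$ while $n(\rho+\sigma) \ge n(\mu+\nu) + 1$, giving codimension at least $2$; a type-(3) move sends a column of some height $h$ straight across the wall, so $\rho+\sigma = \mu+\nu$ and $|\mu| - |\rho| = h$, giving codimension $h$; a type-(4) move sends a column of height $h$ across the wall and down one row, so $n(\rho+\sigma) = n(\mu+\nu) + h$ and $|\rho| - |\mu| = h$, again giving codimension $h$. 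Hence every codimension-$1$ orbit in $\overline{\cO_{\mu;\nu}}$ arises from a single-box (i.e.\ $h = 1$) move of type (3) or of type (4).

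For a single-box type-(3) degeneration we have $\rho + \sigma = \mu + \nu =: \lambda$, so both $\cO_{\rho;\sigma}$ and $\cO_{\mu;\nu}$ lie in $U_\lambda = V \times \cO_\lambda$, whence $N \subset \overline{\cO_{\mu;\nu}} \cap U_\lambda$, which is smooth by Proposition~\ref{prop:union1}(4); the reduction above then applies. For a single-box type-(4) degeneration, in which the box leaves row $i$ of $\nu$ and enters row $i+1$ of $\mu$, one has $\rho_1 = \mu_1$ (only row $i+1 \ge 2$ of $\mu$ changes) and $\rho[1] + \sigma = \mu[1] + \nu$, because the $+1$ in row $i+1$ of $\mu$ and the $-1$ in row $i$ of $\nu$ cancel in the shifted sum $\mu_{j+1} + \nu_j$. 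Thus both $\cO_{\rho;\sigma}$ and $\cO_{\mu;\nu}$ lie in $U_{m,\pi}$ with $m = \mu_1$ and $\pi = \mu[1] + \nu$, so $N \subset \overline{\cO_{\mu;\nu}} \cap U_{m,\pi}$, which is smooth by Proposition~\ref{prop:union2}(4), and the reduction applies again. Since the orbit parametrization, the list of covering relations, the dimension formula, and Propositions~\ref{prop:union1} and~\ref{prop:union2} all hold over an arbitrary algebraically closed field, the theorem follows in that generality.

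The substantive content — smoothness of $\overline{\cO_{\mu;\nu}}$ within the strata $U_\lambda$ and $U_{m,\pi}$ — is already established, so the only work left is the bookkeeping of the previous two paragraphs: checking that moves of types (1) and (2) never produce a codimension-$1$ orbit, and verifying that a single-box type-(4) move fixes the invariants $(m,\pi)$ that cut out $U_{m,\pi}$. I expect this last verification — translating the pictorial move in Section~\ref{sect:covering} into the identity $\rho[1]+\sigma = \mu[1]+\nu$ — to be the one place where a little care is needed.
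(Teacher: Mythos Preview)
Your proposal is correct and follows essentially the same approach as the paper: reduce to codimension-$1$ orbits, use the dimension formula to see that these arise only from single-box moves of type~(3) or~(4), and then apply Propositions~\ref{prop:union1}(4) and~\ref{prop:union2}(4) to the smooth open pieces $\overline{\cO_{\mu;\nu}}\cap U_\lambda$ and $\overline{\cO_{\mu;\nu}}\cap U_{m,\pi}$ respectively. Your write-up is slightly more detailed than the paper's (which simply asserts the codimension computation and the openness of these intersections), but the argument is the same.
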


\begin{proof}
Let $\lambda=\mu+\nu$, $m=\mu_1$, and $\pi=\mu[1]+\nu$. Suppose that $\cO_{\rho;\sigma}$ has codimension $1$ in $\overline{\cO_{\mu;\nu}}$. From the description of covering relations given in Section~\ref{sect:covering} and the dimension formula \eqref{eqn:dim-norb}, it follows that either $(\rho;\sigma)\in\cQ_\lambda$ (in the case of a type (3) move of a single box) or $(\rho;\sigma)\in\cQ_{m,\pi}$ (in the case of a type (4) move of a single box). So $\cO_{\rho;\sigma}$ is contained in either $\overline{\cO_{\mu;\nu}}\cap U_\lambda$ or $\overline{\cO_{\mu;\nu}}\cap U_{m,\pi}$, both of which are open in $\overline{\cO_{\mu;\nu}}$ and smooth by Propositions \ref{prop:union1} and \ref{prop:union2}. So $\overline{\cO_{\mu;\nu}}$ is smooth at all points of $\cO_{\rho;\sigma}$, proving the result.
\end{proof}


\end{document}